\theoremstyle{plain}
\newtheorem{thm}                 {Theorem}     
\newtheorem{proposition}  [thm]  {Proposition}
\newtheorem{prop}  [thm]  {Proposition}
\newtheorem{corollary}    [thm]  {Corollary}
\newtheorem{lemma}        [thm]  {Lemma}
\theoremstyle{definition}
\newtheorem{remark}       [thm]  {Remark}
\newtheorem{defn}   [thm]  {Definition}
\newtheorem{conv}   [thm]  {Convention}
\newcommand{\ignore}[1]{}
\newcommand{\R}{{\mathbb{R}}}
\newcommand{\Z}{{\mathbb{Z}}}
\newcommand{\N}{{\mathbb{N}}}
\newcommand{\C}{{\mathbb{C}}}
\newcommand{\CP}{{\mathbb{C}}{{P}}}
\newcommand{\RP}{{\mathbb{R}{{P}}}}
\newcommand{\beq}{\begin{equation}}
\newcommand{\eeq}{\end{equation}}
\newcommand{\bea}{\begin{eqnarray}}
\newcommand{\eea}{\end{eqnarray}}
\newcommand{\ben}{\begin{eqnarray*}}
\newcommand{\een}{\end{eqnarray*}}
\newcommand{\ra}{\rightarrow}
\newcommand{\cd}{\partial}
\newcommand{\wt}{\widetilde}
\newcommand{\wh}{\widehat}
\newcommand{\ip}[1]{\langle#1\rangle}
\newcommand{\n}[1]{| \|#1\| |}
\def \d{\mathrm{d}}
\newcommand{\dist}{{\rm dist}\, }
\newcommand{\spec}{{\rm spec}\, }
\newcommand{\eps}{\varepsilon}
\newcommand{\M}{\mathsf{M}}
\newcommand{\XX}{\mathsf{X}}
\newcommand{\YY}{\mathsf{Y}}
\newcommand{\UU}{\mathsf{U}}
\newcommand{\hh}{\mathsf{H}}
\newcommand{\lip}{\mathsf{Lip}}
\newcommand{\ul}{\underline}
\renewcommand{\theequation}{\arabic{section}.\arabic{equation}}
\renewcommand{\thethm}{\arabic{section}.\arabic{thm}}
\newcommand{\news}{\setcounter{thm}{0} \setcounter{equation}{0}}
\begin{document}

\title{The adiabatic limit of wave map flow on a two torus}
\author{
J.M. Speight\thanks{E-mail: {\tt speight@maths.leeds.ac.uk}}\\
School of Mathematics, University of Leeds\\
Leeds LS2 9JT, England
}

\date{}
\maketitle

\begin{abstract}
The $S^2$ valued wave map flow on a Lorentzian domain $\R\times \Sigma$, where
$\Sigma$ is any flat two-torus, is studied. The Cauchy problem with initial
data tangent to the moduli space of holomorphic maps $\Sigma\ra S^2$ is
considered, in the limit of small initial velocity. It is proved that
wave maps, in this limit, converge in a precise sense to geodesics in the
moduli space of holomorphic maps, with respect to the $L^2$ metric.
This establishes, in a rigorous setting, a long-standing informal conjecture
of Ward.
\end{abstract}

\section{Introduction}\news

Wave maps are the analogue of harmonic maps in the case where the domain is
Lorentzian. They satisfy a semilinear wave equation which has been heavily
studied,
in the simplest nontrivial case of $S^2$ target space,
 as a model PDE system involving a manifold-valued field \cite{shastr}.
The wave map equation is particularly interesting in the case where the
domain is $(\R\times\Sigma,dt^2-g_\Sigma)$,
with $(\Sigma,g_\Sigma)$ an oriented Riemannian
two-manifold. 
In this case, the static wave map problem is conformally invariant, so 
static solutions on $\Sigma=\R^2$ in particular have no preferred
scale: they can be dilated to any size without changing their energy. This
suggests that time-dependent solutions with initial data close to a static
solution might collapse and form singularities in finite time,
an issue which has been heavily studied both numerically and analytically
mainly for $\Sigma=\R^2$, $N=S^2$, within a certain
rotational equivariance class.
Numerical studies of increasing sophistication suggested that finite-time 
collapse can occur, and suggested formal models of the collapse process
\cite{leepeyzak,bizchmtab,linsad,ovcsig}. The first rigorous proof of blow-up came in 
the work of Krieger, Schlag and Tataru \cite{krischtat}, who proved the
existence of rotationally equivariant initial data, of topological degree
$n=1$, leading to
finite-time collapse. Rodnianski and Sterbenz subsequently
proved existence of equivariant initial data of every degree $n\geq 4$
leading to finite-time collapse, and proved that collapse is stable
to small perturbations of the initial data, at least within the 
equivariance class \cite{rodste}. These results were extended to every
degree $n\geq 1$ in work of Raphael and Rodnianski \cite{raprod}, 
which also established
detailed asymptotics and universality properties of the collapse mechanism. For a
thorough discussion of blow-up of wave maps, see \cite{ovcsig,raprod}.

This paper addresses a different analytic issue from singularity formation,
namely the validity of the {\em geodesic approximation} to wave map flow.
To motivate this, one should think of wave maps $\R\times\Sigma\ra S^2$ as
(formal) critical points of the action functional
\beq\label{lico}
S=\int dt\, (T-E),\qquad\mbox{where}\quad T=\frac12\int_\Sigma|\phi_t|^2,\quad
E=\frac12\int_\Sigma|d_\Sigma\phi|^2.
\eeq
It follows (from Noether's Theorem) that they conserve the total
energy $T+E$. A rather general argument of
Lichnerowicz \cite{lic} shows that 
for any  map $\phi:\Sigma\ra S^2$ of topological degree $n\in\Z$
(subject to suitable boundary conditions, if $\Sigma$ is
noncompact), $E\geq 4\pi |n|$, with equality if and only is $\phi$ is
$\pm$ holomorphic. 
So holomorphic maps, if they exist, minimize potential energy in their
homotopy class. Let us denote by $\M_n$ the moduli space of degree $n$
holomorphic maps $\Sigma\ra S^2$.
 Consider a wave map $\phi(t)$ with $\phi(0)\in \M_n$ and
$\phi_t(0)\in T_{\phi(0)}\M_n$ with 
$\|\phi_t(0)\|_{L^2}$ small. By conservation of 
$E+\frac12\|\phi_t(t)\|_{L^2}^2$, one expects that $\phi(t)$ will stay close to
$\M_n$, on which $E$ attains its minimum value, for as long as the solution 
persists. 
This led Ward to suggest \cite{war}, 
in the specific case $\Sigma=\R^2$,
that such wave maps should be well approximated by 
the dynamical system with action $S$, but with $\phi(t)$ {\em constrained} to
$\M_n$ for all time.  Since $E$ is constant on $\M_n$, this constrained
system is
equivalent to geodesic motion on $\M_n$ with respect to the
$L^2$ metric (obtained by restricting the quadratic form $T$ to $T\M_n$).
A similar approximation had previously been proposed by Manton \cite{man}
for low energy monopole dynamics, and the geodesic approximation is now
a standard technique in the study of the dynamics of topological solitons
\cite{mansut}.

Geodesic motion on $\M_2$ (for $\Sigma=\R^2$) was studied in detail in
\cite{lee}. There is a technical problem: the $L^2$ metric is only well-defined
on the leaves of a foliation of $\M_n$ and one must impose by hand that
$\phi(t)$ remains on a single leaf. This turns out to be ill-justified (it
precludes singularity formation for $n=1$, for example, in contradiction 
of \cite{krischtat,raprod}). This technical deficiency is removed if we choose
$\Sigma$ to be a compact Riemann surface. Here geodesic motion in $\M_n$ is
globally well-defined, if incomplete \cite{sadspe}, and the $L^2$
geometry of $\M_n$ is quite well understood, at least for some choices
of $\Sigma$ and $n$ \cite{macspe,spe_jmp,spe_T2,spe_rat1}. 

The question remains: is geodesic motion in $\M_n$ really a good
approximation to wave map flow in the adiabatic (low velocity) limit?
The purpose of this paper is to prove that it {\em is}, 
for times of order $(\mbox{initial velocity})^{-1}$ at least in the
case where $\Sigma$ is any flat two-torus. More precisely, we will prove:

\begin{thm}[Main Theorem]
\label{lilcol}
 Let $\M_n$ denote the moduli space of 
degree $n\geq 2$ holomorphic maps from a flat two-torus $\Sigma$ to $S^2$.
For fixed $\phi_0\in \M_n$ and $\phi_1\in T_{\phi_0}\M_n$ consider the one 
parameter
family of initial value problems for the wave map equation with $\phi(0)=\phi_0$, 
$\phi_t(0)=\eps\phi_1$, parametrized by $\eps>0$. There exist constants $\tau_*>0$ 
and $\eps_*>0$, depending only on the initial data,
such that for all $\eps\in(0,\eps_*]$, the problem has a unique solution for 
$t\in [0,\tau_*/\eps]$. Furthermore, the time re-scaled solution
$$
\phi_\eps:[0,\tau_*]\times\Sigma\ra S^2,\qquad
\phi_\eps(\tau,p)=\phi(\tau/\eps,p)
$$
converges uniformly in $C^1$ to $\psi:[0,\tau_*]\times\Sigma\ra S^2$, the 
geodesic in $\M_n$ with the same initial data, as $\eps\ra 0$.
\end{thm}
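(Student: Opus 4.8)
The plan is to combine conservation of the total energy, which confines the solution to a tubular neighbourhood of $\M_n$, with a modulation-plus-energy-estimate argument that extracts the geodesic equation as the effective dynamics and controls the transverse fluctuation over the interval $[0,\tau_*/\eps]$. First I would assemble the geometric facts about $\M_n$ that drive everything: for $n\ge 2$, $\M_n$ is a smooth manifold of real dimension $4n$; by the Lichnerowicz bound $E\ge 4\pi n$ on the degree-$n$ class, with equality precisely on $\M_n$; and at each $\sigma\in\M_n$ the Jacobi operator $L_\sigma$ (the Hessian of $E$) is a non-negative, self-adjoint, elliptic operator on $L^2(\sigma^*TS^2)$ whose kernel is exactly $T_\sigma\M_n$ --- holomorphic maps are unobstructed, by a Riemann--Roch count --- with a positive spectral gap above the kernel. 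Since $\M_n$ is $L^2$-geodesically incomplete, I fix $\tau_*>0$ small enough that the geodesic $\psi$ through $(\phi_0,\phi_1)$ exists on $[0,\tau_*]$ with image in a compact $K\subset\M_n$; on a slightly larger compact set the gap is bounded below by some $\delta>0$, $L_\sigma^{-1}$ is uniformly bounded on $(\ker L_\sigma)^\perp$, and there is a tubular neighbourhood $\mathcal U$ of $K$ carrying a smooth nearest-point projection $\pi:\mathcal U\to\M_n$. This material is available from, or a routine consequence of, the cited study of the $L^2$ geometry of $\M_n$ for flat tori.

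Next I would build a refined approximate solution. Rescaling time by $\tau=\eps t$ turns the wave map equation into the singular perturbation $\eps^2\phi_{\tau\tau}-\Delta_\Sigma\phi=(|d_\Sigma\phi|^2-\eps^2|\phi_\tau|^2)\phi$ (in extrinsic form, $S^2\subset\R^3$), and I would seek a solution of the form $\Phi_\eps(\tau)=\exp_{\psi(\tau)}(\eps^2 u(\tau))$, expanding in $\eps$. The order-$\eps^0$ relation says exactly that $\psi(\tau)$ is harmonic, i.e.\ $\psi(\tau)\in\M_n$; the order-$\eps^2$ relation is $L_\psi u=-D_\tau\psi_\tau$, where $D_\tau\psi_\tau$ is the covariant acceleration of $\psi$ regarded as a curve in $S^2$. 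This is solvable iff $D_\tau\psi_\tau\perp T_\psi\M_n$ in $L^2$ --- which is precisely the condition that $\psi$ be an $L^2$-geodesic in $\M_n$ --- and then $u(\tau)=-L_{\psi(\tau)}^{-1}D_\tau\psi_\tau$ is well defined and smooth. With this choice $\Phi_\eps$ solves the rescaled equation up to a smooth residual of size $O(\eps^4)$ and matches the prescribed initial data up to $O(\eps^2)$; the $\eps^2$ correction is indispensable, since without it the residual is only $O(\eps^2)$, too large to yield $C^1$ convergence.

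Then I would set $\phi_\eps=\Phi_\eps+w$ and derive the linear-plus-quadratic wave equation for $w$, whose principal elliptic part is $L_\psi$, and run a high-order energy estimate. Writing $P$ for the $L^2$-orthogonal projection onto $T_\psi\M_n$, the natural energy $\mathcal E_s(w)=\tfrac12\eps^2\|w_\tau\|_{H^{s-1}}^2+\tfrac12\langle L_\psi w,w\rangle_{H^{s-1}}+\cdots$ controls $\eps\|w_\tau\|_{H^{s-1}}$ and $\|(1-P)w\|_{H^s}$ but, because $L_\psi$ has kernel $T_\psi\M_n$, not the component of $w$ along $T_\psi\M_n$; the latter is handled by a small system of ODEs for the coefficients $a_i=\langle w,v_i\rangle$ in a frame $\{v_i\}$ of $T_\psi\M_n$. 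A Gr\"onwall argument on the \emph{fixed} interval $[0,\tau_*]$, driven by the $O(\eps^4)$ residual and the $O(\eps^2)$ initial data, closes a bootstrap giving $\mathcal E_s(w)=O(\eps^4)$ and $|a|+|\dot a|=O(\eps^2)$, hence (for $s$ large, via Sobolev embedding) $\|\phi_\eps-\psi\|_{C^1([0,\tau_*]\times\Sigma)}=O(\eps)\to 0$. The a priori control that lets the bootstrap run --- that $\phi(t)$ stays in $\mathcal U$, so that $\pi$ and $L_\psi$ make sense, and stays near $K$ --- comes from conservation of $E+\tfrac12\|\phi_t\|_{L^2}^2$: since $\phi(0)\in\M_n$ this forces $E(\phi(t))-4\pi n\le\tfrac12\eps^2\|\phi_1\|_{L^2}^2$, which by the spectral gap keeps $\phi(t)$ within $O(\eps)$ of $\M_n$, while its drift along $\M_n$ has speed $O(\eps)$ and so sweeps out only distance $O(\tau_*)$. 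Combined with local well-posedness of the wave map equation for smooth data and the usual blow-up criterion, this also yields existence and uniqueness on all of $[0,\tau_*/\eps]$, and undoing the rescaling finishes the proof.

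The main obstacle is the last step: pushing the long-time difference estimate through at a rate strong enough for $C^1$ convergence. It forces one to exploit simultaneously that the Lagrange-multiplier (normal-to-$S^2$) term is automatically $L^2$-orthogonal to $T_\sigma\M_n$, that the geodesic equation is exactly the solvability condition removing the would-be $O(\eps^2)$ forcing, and that energy conservation supplies the master bound, all while carefully tracking how every error term scales in $\eps$ over a time interval of length $O(\eps^{-1})$. The geometric input --- the uniform spectral gap, the unobstructedness of holomorphic maps, and the incompleteness that makes $\tau_*$ depend on the data --- is indispensable but, by comparison, largely standard.
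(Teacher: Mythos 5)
Your plan hinges on the ansatz $\phi=\exp_{\psi(\tau)}(\eps^2 u(\tau))$, but this is exactly the decomposition the paper explicitly discards. In the introduction the author recalls that the exponential-map formulation $\phi=\exp_{\psi}\eps^2 Y$ was tried in preliminary work \cite{hasspe} and ``turns out \emph{not} to have the analytic properties required by Stuart's method (except for rotationally equivariant wave maps).'' Concretely, composing with $\exp_\psi$ produces, in the extrinsic form, $\phi=\cos(\eps^2|u|)\psi+\sin(\eps^2|u|)\,u/|u|$, so $\Delta\phi$ and $\phi_{\tau\tau}$ pick up second derivatives of $u$ multiplied by nonlinear functions of $u$; the resulting equation for the error is no longer semilinear with polynomial nonlinearities of first order in $Y$, which is what the Picard scheme, the Banach algebra property of $\hh^2$, and the energy-estimate machinery in sections 5--7 are built around. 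The paper sidesteps this by using the ambient linear splitting $\phi=\psi+\eps^2 Y$, paying the price that $Y$ is not a tangent section --- hence $J_\psi$ acting on it is \emph{not} self-adjoint --- which forces the introduction of the improved Jacobi operator $L_\psi$, the pointwise constraint $\psi\cdot Y=-\tfrac12\eps^2|Y|^2$, and the near-coercivity theorems for $Q_1,Q_2$. Your proposal, by living intrinsically on $TS^2$, would avoid the self-adjointness problem entirely, but it trades it for a nonlinearity that the method cannot digest; you have not addressed how you would close the iteration or the high-order energy estimate with a quasilinear-type right-hand side, nor how you would recover the algebra structure in $\hh^k$ once trigonometric functions of $|u|$ appear.

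There is a second, structural, divergence worth flagging. You fix $\psi(\tau)$ to be the geodesic once and for all, build the $O(\eps^2)$ corrector $u=-L_\psi^{-1}D_\tau\psi_\tau$ to push the residual to $O(\eps^4)$, and then Gr\"onwall the remainder $w$ on the rescaled interval $[0,\tau_*]$. The paper instead lets $\psi(t)=\psi(q(t))$ modulate, imposes $\ip{Y,\psi_\mu}=0$ to derive the evolution equation (3.12) for $q$, and only a posteriori shows $q=q_*+\eps^2\wt q$ with $\wt q$ controlled; no second-order corrector is built. Your route would be a legitimate alternative (it is closer to the ``matched asymptotics'' style) \emph{if} the underlying decomposition were workable, but the drift/orthogonality bookkeeping you gesture at via ``ODEs for $a_i=\ip{w,v_i}$'' is precisely what the paper's coupled system (3.12) formalises, and your claim that energy conservation alone keeps the drift along $\M_n$ at distance $O(\tau_*)$ does not by itself give the uniform-in-$\eps$ control of $\ddot q$ and $\dddot q$ that the paper needs in the a priori bound (Theorem 8.3). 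So even setting aside the exponential-map obstruction, the modulation step needs to be made as explicit as the paper's before the bootstrap on $[0,\tau_*/\eps]$ can be closed.
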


To prove this we will adapt the perturbation method devised by Stuart to prove validity of 
the geodesic approximation in the critically coupled
abelian Higgs and Yang-Mills-Higgs models \cite{stu1,stu2}. The wave map
problem has a key similarity with these gauge-theoretic problems, namely
a moduli space of static solutions which minimize energy in their homotopy
class and satisfy a system of first order ``Bogomol'nyi'' equations.
(For wave maps, the Bogomol'nyi equation is the condition that $\phi$
be $\pm$ holomorphic, i.e.\ the Cauchy-Riemann equation.) Roughly, the idea
is to decompose the solution $\phi(t)$ as $\phi(t)=\psi(t)+\eps^2Y(t)$
where $\psi(t)\in\M_n$, and control the growth of a suitable Sobolev norm
of the error $Y(t)$ uniformly in $\eps$ by means of energy estimates.
One concurrently shows that the projected trajectory $\psi(t)$ converges to a 
geodesic in $\M_n$.

In comparison with Stuart's work on vortices and monopoles, the situation
we study is simpler in two respects: we work on a compact domain $\Sigma$
(rather than $\R^2$ or $\R^3$), and our system has no gauge symmetry.
On the other hand, the wave map problem introduces two new challenges
for the method.

First, our field is manifold-valued, so it is not clear a priori what the
decomposition $\phi(t)=\psi(t)+\eps^2 Y(t)$ really means. In preliminary work
on this problem, it was suggested that the correct formulation
was $\phi(t)=\exp_{\psi(t)}\eps^2Y(t)$, where $\exp:TS^2\ra S^2$ is the 
exponential map \cite{hasspe}. In fact, this turns out {\em not} to have
the analytic properties required by Stuart's method (except for rotationally
equivariant wave maps). In this paper we isometrically embed
$S^2$ in $\R^3$ and use the ambient linear structure to project as usual,
$\phi(t)=\psi(t)+\eps^2Y(t)$. This choice is simple, but has significant
repercussions: $Y$ is no longer tangent to the map $\psi$ 
(not a section of $\psi^{-1}TS^2$), and must satisfy a nonlinear pointwise 
constraint to ensure that $\phi$ is $S^2$ valued. The evolution of $Y$ is
governed by a nonlinear wave equation whose (spatial) linear part is the
Jacobi operator $J_\psi$ for the harmonic map $\psi:\Sigma\ra S^2$. It turns
out that $J_\psi$ is {\em not} self-adjoint when acting on non-tangent
sections (such as $Y$). Since self-adjointness of (the analogue of)
$J_\psi$ is crucial for Stuart's method, we must devise a way round this:
we replace $J_\psi$ by an ``improved'' Jacobi operator $L_\psi$, which coincides 
with $J_\psi$ on tangent sections, but is self-adjoint on all sections, and
introduce compensating nonlinear terms into the wave equation for $Y$ using
the pointwise constraint. Further difficulties result: $L_\psi$,
unlike $J_\psi$, does not define a coercive quadratic form on the $L^2$
orthogonal complement of $T_\psi \M_n$. We must work instead with a weaker
near-coercivity property, which turns out to suffice for our purposes.

Second, while $\Sigma$ is compact, the moduli space $\M_n$ is not.
Of course, the vortex and monopole moduli spaces, dealt with by Stuart, are
also noncompact, but in those cases, moving to infinity corresponds to
(clusters of) solitons separating off and escaping to infinite 
separation, a well-controlled process. For wave maps, by contrast, approaching
the boundary of $\M_n$ at infinity corresponds to one or more lumps
collapsing and ``bubbling off''. In this process, $\psi$ becomes singular and
both geodesic motion and wave map flow become badly behaved. To handle this, 
we must keep careful track of the position (of $\psi\in\M_n$) dependence of
our various estimates, and modify Stuart's a priori energy bound so that we
simultaneously control the error $Y(t)$, the deviation of $\psi(t)$ from the
corresponding geodesic, and the distance of $\psi(t)$ from $\cd_\infty\M_n$.

It is interesting to speculate to what extent Theorem \ref{lilcol} can be
generalized. It is clear that the proof presented here generalizes quite easily 
to the case of a general compact Riemann surface, provided $n$ is sufficiently large 
compared with the genus of $\Sigma$. The 
reason for restricting to the case $\Sigma=T^2$ is mainly one of presentation: 
the existence of global cartesian coordinates makes it straightforward
to define the various function spaces, for example. Generalizing the
target space is not so straightforward. The wave map flow $\R\times\Sigma\ra N$
has the appropriate ``Bogomol'nyi'' form for Stuart's method to apply
whenever $\Sigma,N$ are both compact k\"ahler manifolds (in fact, it suffices
for $\Sigma$ to be co-k\"ahler). The choice $N=\CP^k$, $k\geq 2$, is of some 
interest in mathematical physics, for example. But here the reliance on
an isometric embedding $N\subset\R^p$ becomes very problematic. It seems
likely that some variant of Theorem \ref{lilcol} does remain true for
general compact k\"ahler targets, but proving it would require a rather 
different approach, perhaps along the lines sketched in \cite{stu3}.

One should note that Theorem \ref{lilcol} gives no information about singularity
formation for wave maps on $\R\times\Sigma$ because, although there certainly
are geodesics $\psi(\tau)$ which hit $\cd_\infty\M_n$ in time $\tau_0<\infty$, and the corresponding
wave maps do converge uniformly to $\psi(\tau)$ on some interval $[0,\tau_*]$, there is
no reason to expect $\tau_*=\tau_0$. In fact Raphael and Rodnianski have shown that
singularity formation of equivariant wave maps on $\R^2$ deviates significantly from 
the dynamics predicted by the (suitably regulated) geodesic approximation \cite{raprod}. Since
blow up is a (spatially) local phenomenon, these results presumably apply in some form
on the torus, which would imply $\tau_*<\tau_0$. Nontheless, the geodesic approximation
(on compact $\Sigma$ or, regulated, on $\Sigma=\R^2$) predicted finite time blow-up
of wave maps in $(2+1)$ dimensions, and this prediction turned out to be correct. 
The geodesic approximation also makes predictions about the {\em genericity} of blow up.
It is not hard to prove, for example,  that generic geodesics on
$\M_1$ for $\Sigma=S^2$ do {\em not} hit the boundary at infinity. It would be interesting to see
whether the full wave map flow has this property (i.e.\ generic Cauchy data tangent to $\M_1$
have global smooth solutions). The analogue of Theorem \ref{lilcol} for $\Sigma=S^2$
could provide a starting point for proving such results.

The rest of the paper is structured as follows. In section \ref{sec:mod} the moduli space
$\M_n$ of holomorphic maps is introduced and its key property, Proposition \ref{psiprop}
(existence of a smooth
local parametrization about any point), established. In section \ref{sec:proj}, the 
projection of the wave map flow to $\M_n$ is defined, and the coupled system satisfied by
$\psi$ and the error section $Y$ is derived,
equation (\ref{cs}). In section \ref{sec:prelim}, some standard
functional analytic definitions and results are introduced. Our aim here, and in
the remainder of the
paper, is to make the proof accessible to a wide mathematical physics audience, not just
experts in PDE.\, In section \ref{sec:locex} a local existence and uniqueness theorem
for the coupled system (\ref{cs}) governing $(\psi,Y)$ is proved, Theorem \ref{locex}. 
Of course, local 
existence and uniqueness 
of {\em wave maps} in this context is not new; the extra, and new, information we
obtain here is local existence and
uniqueness of the {\em projection} to $\M_n$.  This is the engine underlying Stuart's method,
and we go through the argument in some detail, not only because there are certain new aspects we
have to deal with which Stuart did not (e.g.\ preservation of the pointwise constraint on $Y$),
but also because the requirements of Picard's method for this proof determine our choice of function
spaces, a point which is not obvious (to the non-analyst) in Stuart's original applications
of the method \cite{stu1,stu2}. In section \ref{sec:coerce} the key near-coercivity property
of the quadratic form associated with the improved Jacobi operator is proved,
Theorem \ref{Q2thm} (roughly, that
$\ip{L_\psi Y,L_\psi L_\psi Y}_{L^2}$ controls the $H^3$ norm of $Y$). In section \ref{energyestimates}
energy estimates are established which bound the growth of $Y(t)$. Finally, in section
\ref{sec:long}, the coercivity properties and energy estimates are combined to prove long time 
existence of the solution $(\psi,Y)$, and establish convergence to the corresponding
geodesic in $\M_n$. An appendix presents the proofs of some
 basic analytic properties of the nonlinear terms in the
coupled system for $(\psi,Y)$ which are used repeatedly in section \ref{sec:locex}.

\section{The moduli space of static $n$-lumps}\news
\label{sec:mod}

Static wave maps are harmonic maps $\phi:\Sigma\ra S^2$, that is, solutions
of the harmonic map equation
\beq
\phi_{xx}+\phi_{yy}+(|\phi_x|^2+|\phi_y|^2)\phi=0
\eeq
or, equivalently, critical points of the Dirichlet energy
\beq
E(\phi)=\frac12\int_\Sigma |\phi_x|^2+|\phi_y|^2.
\eeq
Maps $\Sigma\ra S^2$ fall into disjoint homotopy classes labelled by their
degree $n\in\Z$, which we may assume, without loss of generality, is 
non-negative. An argument of Lichnerowicz \cite[p39]{eellem} shows that,
in the degree $n$ class, $E(\phi)\geq 4\pi n$, with equality if and only if
$\phi$ is holomorphic. 
Furthermore, all harmonic maps $\Sigma\ra S^2$ of
degree $n\geq 2$ are holomorphic \cite{eelwoo}.
So the moduli space of interest, $\M_n$, is the
space of degree $n$ holomorphic maps $\Sigma\ra  S^2$. Such maps are called
``$n$-lumps'' by analogy with the case $\Sigma=\C$, where the Dirichlet
energy density typically exhibits $n$ distinct local maxima, which may 
loosely be thought of as smoothed out particles, or lumps of energy.

The global topology
of the space $\M_n$ is quite complicated, for example, $\M_2\cong [\Sigma\times PSL(2,\C)]/(\Z_2\times\Z_2)$,
\cite{spe_T2}. For our purposes local
information will suffice, however. Given a smooth variation  $\phi_s$ of 
$\phi=\phi_0\in\M_n$ we have $dE(\phi_s)/ds=0$ at $s=0$ (since $\phi$ is
harmonic) and
\beq
\frac{d^2E(\phi_s)}{ds^2}=\ip{V,J_\phi V}=\int_{\Sigma}V\cdot J_\phi V.
\eeq
where $V=\cd_s\phi_s|_{s=0}\in\Gamma(\phi^{-1}TS^2)$ is the infinitesimal
generator of the variation and
\beq\label{Jdef}
J_\phi V=-V_{xx}-V_{yy}-(|\phi_x|^2+|\phi_y|^2)V
-2(\phi_x\cdot V_x+\phi_y\cdot V_y)\phi
\eeq
is the Jacobi operator at the map $\phi$ \cite[p155]{ura}. This operator is
self-adjoint and elliptic, and its spectrum determines the stability properties
of $\phi$. By the Lichnerowicz argument, $\phi$ minimizes
$E$ in its homotopy class, so $\spec J_\phi$ is non-negative. 
Given a variation $\phi_s$ through {\em harmonic} maps, that is, a curve in
$\M_n$ through $\phi=\phi_0$, its infinitesimal generator
 $V=\cd_s\phi_s|_{s=0}$ satisfies $J_\phi V=0$. 
Hence $T_\phi\M_n\subset\ker J_\phi$.
For a general harmonic map $\phi:M\ra N$
between Riemannian manifolds, the converse may be false, that is, 
there may be sections
$V\in\ker J_\phi\subset \Gamma(\phi^{-1}TN)$ which are not tangent to
any variation of $\phi$ through harmonic maps, and in this case the space
of harmonic maps $M\ra N$ may not be a smooth manifold around $\phi$. It is
important for us to rule out this kind of bad behaviour in our case.
More precisely, we need:

\begin{prop}\label{psiprop}
Given any $\phi_0\in\M_n$, $n\geq 2$, there exists an open set
$U\subset\R^{4n}$ and a smooth map $\psi:U\times\Sigma\ra S^2$ such that,
\begin{itemize}
\item[(i)] for each $q\in U$, $\psi(q,\cdot)\in\M_n$,
\item[(ii)] there exists $q_0\in U$ such that $\phi_0=\psi(q_0,\cdot)$, and
\item[(iii)] $\psi_\mu=\cd\psi/\cd q^\mu$, $\mu=1,2,\ldots,4n$ 
span $\ker J_{\psi(q,\cdot)}$.
\end{itemize}
\end{prop}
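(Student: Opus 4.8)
The plan is to identify $\M_n$ near $\phi_0$ with a finite-dimensional space of holomorphic maps, and to do this by passing to the standard description of degree-$n$ holomorphic maps $\Sigma\ra S^2\cong\CP^1$ as ratios of sections of a line bundle. The key point is that, unlike a general harmonic map problem, here the static equation is the Cauchy--Riemann equation, so $\M_n$ is a complex-analytic object whose smoothness can be read off directly, and the infinitesimal deformation complex has no obstruction. So I would proceed as follows.

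First, I would recall the concrete structure of $\M_n$. Viewing $S^2=\CP^1$ and $\Sigma=\C/\Lambda$, a degree-$n$ holomorphic map is, away from its poles, a meromorphic function on $\Sigma$ with $n$ zeros and $n$ poles (counted with multiplicity); equivalently it is given by a pair $[s_0:s_1]$ of holomorphic sections of a degree-$n$ holomorphic line bundle $L\ra\Sigma$ with no common zero. For fixed topological type this is the data of a point in a projective space $\mathbb{P}H^0(\Sigma,L)^2$ (of complex dimension $2\cdot n = 2n$, so real dimension $4n$, matching the $4n$ in the statement) intersected with the open condition ``no common zeros'', together with a choice of $L\in\mathrm{Pic}^n(\Sigma)\cong\Sigma$ (complex dimension $1$); one checks the total count is $4n$ real dimensions. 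I would set up an explicit holomorphic chart: near $\phi_0$, choose a local holomorphic trivialization and write $\psi(q,\cdot)$ as a ratio of two polynomials-in-$\theta$-functions whose coefficients are the coordinates $q\in U\subset\R^{4n}\cong\C^{2n}$; smoothness of $\psi$ in $(q,p)$ jointly is then manifest because the construction is algebraic in $q$ and holomorphic/real-analytic in $p$, and the no-common-zero condition is open, defining $U$. This gives (i) and (ii) essentially by construction.

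The substantive point is (iii): that the $\psi_\mu$ span $\ker J_{\psi(q,\cdot)}$, i.e.\ that the chart is ``large enough'' to capture every Jacobi field. The inclusion $\mathrm{span}\{\psi_\mu\}=T_\psi\M_n\subseteq\ker J_\psi$ is already noted in the text. For the reverse inclusion I would argue by dimension count: show $\dim_\R\ker J_\psi=4n$. To compute $\dim\ker J_\psi$, I would identify the Jacobi operator with a complex-linear operator. On tangent sections, using the Kähler structure of the target, $J_\psi$ factors through a $\bar\partial$-type operator: a Jacobi field corresponds to a holomorphic section of $\psi^*T\CP^1 = \psi^*\mathcal{O}(2)$ (this is the standard fact that, for holomorphic maps into a Kähler manifold, $\ker J_\psi = H^0(\Sigma,\psi^*T^{1,0}N)$, by a Bochner/Weitzenböck argument, see e.g.\ the computation of the real dimension of the moduli space). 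Since $\psi^*\mathcal{O}(2)$ has degree $2n$, and $\Sigma$ has genus $1$ so that $\deg K_\Sigma=0$, Riemann--Roch gives $\dim_\C H^0(\Sigma,\psi^*\mathcal{O}(2)) = 2n + 1 - 1 = 2n$ provided $2n > 2g-2 = 0$, i.e.\ $n\geq 1$, using that $H^1$ vanishes by Serre duality since $\deg(\psi^*\mathcal{O}(2)\otimes K_\Sigma) = 2n > 0$. Hence $\dim_\R\ker J_\psi = 4n$. Combined with the inclusion $T_\psi\M_n\subseteq\ker J_\psi$ and $\dim_\R T_\psi\M_n = 4n$ (which I get from the chart, since I must check the $\psi_\mu$ are linearly independent — this follows because distinct coefficient data give distinct meromorphic maps), the two spaces coincide, giving (iii).

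I expect the main obstacle to be the clean bookkeeping of (iii): carefully justifying the identification $\ker J_\psi\cong H^0(\Sigma,\psi^*\mathcal{O}(2))$ for \emph{complexified} Jacobi fields (one must be careful that the Jacobi operator is a real operator and that its kernel, complexified, is the space of holomorphic sections — the argument is that $J_\psi$ is, up to a positive factor, $2\bar\partial^*\bar\partial$ on sections of $\psi^*T^{1,0}N$, so harmonic $=$ holomorphic), and verifying that the chart coordinates really are a local diffeomorphism onto a neighbourhood in $\M_n$ (i.e.\ that $q\mapsto\psi(q,\cdot)$ is injective with injective differential near $q_0$). An alternative, more self-contained route to the same conclusion — which avoids sheaf cohomology entirely — is to use the implicit function theorem directly on the Cauchy--Riemann operator: the linearization of $\bar\partial\psi=0$ at $\phi_0$ is $\bar\partial$ on sections of $\phi_0^*T^{1,0}S^2$, which is surjective (its cokernel is $H^1$, which vanishes as above), so the zero set is a smooth manifold of the expected dimension $4n$ near $\phi_0$ with tangent space $\ker\bar\partial=\ker J_{\phi_0}$; I would then take $\psi$ to be any smooth local parametrization of this manifold. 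I would probably present this IFT argument as the main line and relegate the explicit $\theta$-function chart to a remark, since the IFT version makes (i)–(iii) simultaneous and transparent. The one ingredient that needs the hypothesis $n\geq 2$ rather than $n\geq 1$ is, I suspect, not needed for this proposition at all (the dimension count works for $n\geq 1$); $n\geq 2$ is used elsewhere to ensure all harmonic maps are holomorphic, as noted before the statement, so I would simply carry the hypothesis along.
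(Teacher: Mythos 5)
Your proposal is correct and follows essentially the same route as the paper: an explicit elliptic-function chart for $\M_n$ (the paper uses Weierstrass sigma functions after stereographic projection from a regular value of $\phi_0$, where you use pairs of sections of a line bundle over $\mathrm{Pic}^n(\Sigma)$ and theta functions — the same construction in different clothing), followed by the identical dimension count $\ker J_\psi\cong H^0(\Sigma,\psi^{-1}T'S^2)$ with $\deg = 2n$, Riemann--Roch, and Serre duality on the genus-one curve. Your IFT reformulation passes through exactly the same $H^1$-vanishing so is not a genuinely different argument, and your observation that the dimension count needs only $n\geq 1$ is right — the hypothesis $n\geq 2$ is carried along from the earlier fact that all degree-$n$ harmonic maps are then holomorphic.
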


\begin{proof}
Choose any $p\in S^2$ such that both $p$ and $-p$ are regular values of
$\phi_0$ (such $p$ exists by Sard's Theorem). Then $\phi:\Sigma\ra S^2$
is in $\M_n$ if and only if $s_p\circ\phi$,
its image under stereographic projection from
$p$, is meromorphic, of degree $n$, that is, a degree $n$ elliptic
function. The most general degree $n$ elliptic
function is \cite{law}
\beq
(s_p\circ\phi)(z)=\lambda\frac{\sigma(z-a_1)\cdots\sigma(z-a_n)}{\sigma(z-b_1)\cdots
\sigma(z-b_n)}
\eeq
where $\sigma$ is the Weierstrass sigma function, $\lambda,a_1,\ldots,
a_n,b_1,\ldots,b_n$ are complex constants, $\lambda\neq 0$,
$\sum a_i=\sum b_i\mod\Lambda$ and $\{a_i\}\cap\{b_j\}=\emptyset$.
Hence, we may parametrize a general point $\phi\in\M_n$ by $4n$ real numbers
$q^\mu$,
for example, the real and imaginary parts of $\lambda,
a_1,\ldots,b_{n-1}$ having set $b_n=a_1+\cdots+a_n-b_1-\cdots-b_{n-1}$.
Further, $\phi$ manifestly depends smoothly on $q$ and $z$. Hence
we have a smooth map $\psi:\wt{U}\times\Sigma\ra S^2$ satisfying properties
(i) and (ii). By our choice of $p$, $s_p\circ\phi_0$ has $n$ distinct
zeroes and $n$ distinct poles, so $\{\psi_\mu\}$ at $q=q_0$ are 
linearly independent
sections of $\psi(q,\cdot)^{-1}TS^2$, and hence, by smoothness, also linearly independent on
some neighbourhood $U$ of $q_0$ in $\wt{U}$. As explained previously,
$\psi_\mu$ span a subspace of $\ker J_{\psi(q,\cdot)}$,
so it remains to show that $\ker J_\phi$ has dimension $4n$ for any
$\phi\in \M_n$.

It is known \cite[p174]{ura}
that $\ker J_\phi$ is isomorphic, as a complex vector space,
 to $H^0(\Sigma, L)$, the space of holomorphic
sections of the line bundle $L=\phi^{-1}T'S^2$, where $T'S^2$ denotes the 
holomorphic tangent bundle of $S^2$. Since $\phi$ has degree $n$ and
$T'S^2$ has degree $2$, $L$ has degree $2n$. Now, by the Riemann-Roch
formula \cite{hitsegwar}
\beq
\dim H^0(\Sigma,L)-\dim H^1(\Sigma,L)=\deg L=2n
\eeq
since $\Sigma$ has genus $1$. But, by Serre duality, $H^1(\Sigma,L)\cong
H^0(\Sigma,K\otimes L^*)^*$ where $K$ is the canonical bundle of $\Sigma$.
Now $K$ is trivial, so $K\otimes L^*$ has degree $-2n$, and hence has no 
holomorphic sections, whence $H^1(\Sigma,L)=0$. It follows that
$\ker J_\phi$ has real dimension $4n$, as required.
\end{proof}

We can regard $q^\mu$ as local coordinates on $\M_n$. 
Given the initial data $\phi_1\in T_{\phi_0}\M_n$ of interest, we choose
and fix such a $\psi:U\times \Sigma\ra S^2$ and denote by $q_0\in U$ and
$q_1\in\R^{4n}$ those vectors such that $\phi_0=\psi(q_0,\cdot)$ and
$\phi_1=q_1^\mu\psi_\mu(q_0,\cdot)$. Here, as henceforth, we use the
Einstein summation convention on repeated indices. We also choose
and fix a compact neighbourhood $K$ of $q_0$ in $U$. In a slight abuse
of notation, we will also use the symbol $\psi$ to denote the
associated map $U\supset K\ra \M_n\subset (S^2)^\Sigma$, so $\psi(q)$ will
denote the holomorphic map $z\mapsto \psi(q,z)$. We will also use $\psi(t)$ as shorthand for
$\psi(q(t))$, meaning 
a general curve in $\M_n$.

There is a natural Riemannian metric on $\M_n$, the $L^2$ metric, whose
components in the local coordinate system $q^\mu$ are
\beq\label{metdef}
\gamma_{\mu\nu}(q)=\ip{\psi_\mu,\psi_\nu}=\int_\Sigma\frac{\cd\psi}{\cd q^\mu}\cdot\frac{\cd\psi}{\cd q^\nu}.
\eeq
The associated Christoffel symbol is
\beq\label{chrisdef}
G^\mu_{\lambda\nu}(q)=\gamma^{\mu\alpha}\ip{\psi_\alpha,\psi_{\lambda\nu}}
\eeq
where $\gamma^{\mu\nu}$ is the inverse metric and $\psi_{\mu\nu}=\cd^2\psi/\cd q^\mu\cd q^\nu$. This is
the metric whose geodesics approximate wave maps in the adiabatic limit.

\section{Projection of wave map flow and the coupled system}\news
\label{sec:proj}

The wave map equation for $\phi:\R\times\Sigma\ra S^2\subset\R^3$ is
\beq\label{wavemapeq}
\phi_{tt}-\phi_{xx}-\phi_{yy}+(|\phi_t|^2-|\phi_x|^2-|\phi_y|^2)\phi=0.
\eeq
Given $\eps>0$, a small parameter, we decompose $\phi$ as
\beq\label{proj}
\phi=\psi+\eps^2 Y
\eeq
where, at each fixed time, $\psi(t,\cdot):\Sigma\ra S^2$ is a degree $n$ 
harmonic map, and $Y:\Sigma\ra \R^3$.
We may think of $\psi(t)$ as a curve in $\M_n$, the moduli space of degree 
$n$ harmonic maps, and $Y$ as the
``error''
incurred by projecting $\phi(t)$ to $\psi(t)$. It is useful to think of $Y$ as a section of $\psi^{-1}\ul{\R^3}$, where $\ul{\R^3}=S^2\times\R^3$ is the trivial $\R^3$ bundle over $S^2$, and $\psi^{-1}\ul{\R^3}$
is its pullback to $\Sigma$. With this is mind, we refer to $Y$ as the ``error section'', and to any $Z:\Sigma\ra\R^3$ with $Z\cdot\psi=0$ pointwise as a "tangent section" (in bundle language, $Z$ is a section of $\psi^{-1}TS^2
\subset\psi^{-1}\ul{\R^3}$). Clearly, $Y$ is {\em not} a tangent section (unless $Y=0$). Since both $\psi$ and
$\phi$ are $S^2$ valued, $Y$ must satisfy the pointwise constraint
\beq\label{pc}
\psi\cdot Y=-\frac12\eps^2|Y|^2.
\eeq
For a given curve $\psi(t)$, if $\phi$ is a wave map then $Y$ must satisfy the PDE obtained by substituting
(\ref{proj}) into (\ref{wavemapeq}),
\beq\label{YPDE1}
Y_{tt}+J_\psi Y=k+\eps j
\eeq
where $J_\psi$ is the Jacobi operator associated with the harmonic map $\psi(t):\Sigma\ra S^2$ 
 and the terms on the
right hand side are
\bea
k&=&-(\psi_{\tau\tau}+|\psi_\tau|^2\psi)\nonumber\\
j&=&2(\psi_\tau\cdot Y_t)\psi+\eps\{(|Y_t|^2-|Y_x|^2-|Y_y|^2)\psi
+(|\psi_\tau|^2-2\psi_x\cdot Y_x-2\psi_y\cdot Y_y)Y\}\nonumber \\
&&+2\eps^2(\psi_\tau\cdot Y_t)Y+\eps^3(|Y_t|^2-|Y_x|^2-|Y_y|^2)Y.\label{jdef}
\eea
We have here introduced the slow time variable $\tau=\eps t$ as a book-keeping device. The precise expression for
$j$ is not important. What matters is its qualitative form: it depends only on $\psi,\psi_\tau$ and
$Y$ and its first derivatives, and the dependence is smooth (polynomial, in fact).

Superficially (\ref{YPDE1}) looks exactly analogous to the corresponding equation in Stuart's analysis of 
vortex dynamics \cite{stu1}, but this is deceptive. As already noted, the Jacobi operator is a self-adjoint, elliptic,
second order linear operator $J_\psi:\Gamma(\psi^{-1}TS^2)\ra \Gamma(\psi^{-1}TS^2)$ whose spectrum determines the stability
properties of the harmonic map $\psi$ \cite{ura}. It is important
to realize, however, that in (\ref{YPDE1}) $J_\psi$ is acting on $Y$, which is {\em not} a tangent section. So
in (\ref{YPDE1}), $J_\psi$ is precisely the same operator defined above (\ref{Jdef}), but extended to
act on sections of $\psi^{-1}\ul{\R^3}$. But $J_\psi$ is {\em not} self-adjoint (with respect to $L^2$)
as an operator on $\psi^{-1}\ul{\R^3}$, and self-adjointness of (the analogue of) $J_\psi$ is a crucial 
ingredient in Stuart's method. In fact
\beq
J_\psi Z=-\Delta Z-(|\psi_x|^2+|\psi_y|^2)Z+A_\psi Z
\eeq
where the non-self-adjoint piece and its adjoint are
\bea
 A_\psi Z&=&-2(\psi_x\cdot Z_x+\psi_y\cdot Z_y)\psi \nonumber\\
A_\psi^\dagger Z&=&-2\left\{(\psi\cdot Z)\Delta \psi+(\psi\cdot Z)_x\psi_x+(\psi\cdot Z)_y\psi_y\right\}.
\eea
and we have adopted the analysts' convention for the Laplacian, that is, $\Delta Z=Z_{xx}+Z_{yy}$. 
To remedy this deficiency, we make the following definition:

\begin{defn} Given a harmonic map $\psi:\Sigma\ra S^2$, we define its {\em improved Jacobi operator} to
be
$$
L_\psi:\Gamma(\psi^{-1}\ul{\R^3})\ra\Gamma(\psi^{-1}\ul{\R^3}),\qquad
L_\psi=J_\psi+A_\psi^\dagger.
$$
Note that $L_\psi$ coincides with $J_\psi$ on $\Gamma(\psi^{-1}TS^2)$, and hence $L_\psi$ maps tangent sections
to tangent sections. Its principal part is the Laplacian, so it is elliptic, and it is manifestly self adjoint.
\end{defn}

\begin{remark}\label{Lperp} Any section can be decomposed into tangent and normal components. As just observed,
$L_\psi$ maps a tangent section $Z$ to the tangent section $J_\psi Z$, so an alternative way of characterizing
$L_\psi$
is by specifying how it acts on normal sections,  $\alpha\psi$ where
$\alpha:\Sigma\ra\R$. A short calculation, using harmonicity of $\psi$, yields
\beq
L_\psi(\alpha\psi)=-(\Delta\alpha)\psi-4(\alpha_x\psi_x+\alpha_y\psi_y).
\eeq
It follows immediately that $\ker L_\psi=\ker J_\psi\oplus\ip{\psi}$. 
Note that, in general, $L_\psi$ does not map normal sections to normal sections.
\end{remark}

Now, for any $Y$ satisfying the pointwise constraint,
\beq\label{jhatdef}
A_\psi^\dagger Y=\eps^2\{|Y|^2\Delta\psi+2(Y\cdot Y_x)\psi_x+2(Y\cdot Y_y)\psi_y\}=:\wh{j}
\eeq
and so, for any curve $\psi(t)$, if $\phi$ is a wave map then $Y$ satisfies the PDE
\beq\label{YPDE}
Y_{tt}+L_\psi Y=k+\eps j'
\eeq
where $j'=j+\wh{j}$. Note that $j'$ has the same qualitative analytic properties as $j$ (specifically, 
no higher than first derivatives of $Y$ appear).

We have yet to specify the curve $\psi(t)$ in $\M_n$. We do this by demanding that the error section $Y$
should at all times be $L^2$ orthogonal to $T_{\psi}\M_n$. In this case ($\Sigma=T^2$, $n\geq 2$),
$T_\psi \M_n=\ker J_\psi$ so, in terms of the local coordinate system $q$ on $\M_n$ provided by Proposition \ref{psiprop},
this amounts to requiring
\beq\label{oc}
\ip{Y,\psi_\mu}=0,\qquad \mu=1,2,\ldots,4n.
\eeq
We convert this into an evolution equation for $q$
by differentiating the 
orthogonality constraint (\ref{oc}) twice with respect to time and using (\ref{YPDE}),
\beq
\ip{-L_\psi Y+k+\eps j',\psi_\mu}+2\eps{Y_t,\psi_{\mu\nu}}\dot{q}^\nu+\eps^2\ip{Y,\psi_{\mu\nu\lambda}}\dot{q}^\nu
\dot{q}^\lambda+\eps^2\ip{Y,\psi_{\mu\nu}}\ddot{q}^\nu=0,
\eeq
where an overdot denotes differentiation
with respect to $\tau$. Now $L_\psi$ is self adjoint and $\psi_\mu\in \ker J_\psi\subset
\ker L_\psi$, so this equation simplifies to $\ip{k,\psi_\mu}=O(\eps)$, or, more explicitly,
\beq\label{modulational}
\ddot{q}^\mu+G^\mu_{\nu\lambda}(q)\dot{q}^\nu\dot{q}^\lambda=\eps h^\mu(\eps,q,\dot{q},Y,Y_t)+\eps^2\gamma^{\mu\nu}
\ip{Y,\psi_{\nu\lambda}}\ddot{q}^\lambda
\eeq
where $\gamma$  and $G$ are the $L^2$ metric and its Christoffel symbol in the coordinate system $q$, 
(\ref{metdef}), (\ref{chrisdef}), and the function $h$ is
\bea
h^\mu&=&\gamma^{\mu\nu}\bigg\{\ip{Y_t,\psi_{\nu\lambda}}\dot{q}^\lambda
+\eps\ip{Y,\psi_{\nu\lambda\rho}}\dot{q}^\lambda\dot{q}^\rho
+\eps\ip{(\psi_\lambda\cdot\psi_\rho)Y,\psi_\nu}\dot{q}^\lambda\dot{q}^\rho\nonumber\\
&&-2\eps\ip{(\psi_x\cdot Y_x+\psi_y\cdot Y_y)Y,\psi_\nu}
+2\eps^2\ip{(\psi_\lambda\cdot Y_t)Y,\psi_\nu}\dot{q}^\lambda\nonumber\\
&&+\eps^3\ip{(|Y_t|^2-|Y_x|^2-|Y_y|^2)Y,\psi_\nu}\bigg\}.\label{hdef}
\eea
Taking the formal limit $\eps\ra 0$, (\ref{modulational}) reduces to the geodesic equation on $(\M_n,\gamma)$,
as one would hope. 

To summarize, if $\phi$ is a wave map, and $q(t)$ is a curve in $\M_n$ such that $Y=\eps^{-2}(\phi-\psi(q))$ satisfies the orthogonality constraint (\ref{oc}) at all times, then $(Y,q)$ satisfies the coupled system
\beq
\label{cs}
Y_{tt}+LY=k+\eps j',\qquad
\ddot{q}^\mu+G^\mu_{\nu\lambda}\dot{q}^\nu\dot{q}^\lambda=\eps h^\mu+\eps^2\gamma^{\mu\nu}
\ip{Y,\psi_{\nu\lambda}}\ddot{q}^\lambda
\eeq
and the pointwise constraint (\ref{pc}). Conversely, if $(Y,q)$ satisfies the constraints (\ref{pc}) and (\ref{oc})
and the coupled system (\ref{cs}), then $\phi=\psi(q)+\eps^2Y$ is a wave map. Our goal is to prove that
(\ref{cs}) with fixed initial data $q(0)=q_0$, $\dot{q}(0)=q_1$, $Y(0)=0$, $Y_t(0)=0$
has solutions with $\|Y\|_{C^1}$ bounded uniformly in $\eps$ for times of order $\eps^{-1}$. 
It follows immediately that, in the limit $\eps\ra 0$, $\phi(\tau/\eps)$ converges uniformly to a curve 
$\psi(\tau)$ in $\M_n$. In the course of the proof, we will simultaneously show that $\psi(\tau)$ is the 
geodesic with initial data $q_0,q_1$.

\section{Analytic prelimaries}\news
\label{sec:prelim}

In this section we set up the function spaces we will use, and collect some standard functional analytic
results which we will appeal to repeatedly. 
More details can be found in \cite{donkro}, and references therein.
Let $\hh^k$ denote the set of real-valued functions on $\Sigma$
whose partial derivatives up to order $k$ are square integrable. This is a Hilbert space with respect to the
inner product
\beq
\ip{f,g}_k=\sum_{|\alpha|\leq k}\int_\Sigma D_\alpha f D_\alpha g
\eeq
where $\alpha$ is a multi-index taking values from $\{x,y\}$, $|\alpha|$ is its length and $D_\alpha=\cd_{\alpha_1}
\cd_{\alpha_2}\cdots\cd_{\alpha_{|\alpha|}}$, so $D_{(x,x,y)}=\cd^2_x\cd_y$, for example. We denote the
corresponding norm by $\|\cdot\|_k$,
\beq
\|f\|_k^2=\ip{f,f}_k.
\eeq
 Let $H^k=\hh^k\oplus\hh^k\oplus\hh^k$, the space of
$\R^3$-valued functions on $\Sigma$ whose components are in $\hh^k$. This is a Hilbert space with respect
to the inner product
\beq
\ip{Y,Z}_k=\ip{Y_1,Z_1}_k+\ip{Y_2,Z_2}_k+\ip{Y_3,Z_3}_k
\eeq
whose norm will again be denoted $\|\cdot\|_k$.
We adopt the convention that $\|\cdot\|=\|\cdot\|_0$ and $\ip{\cdot,\cdot}=\ip{\cdot,\cdot}_0$, that is,
undecorated norms and inner products refer to $L^2$. We will frequently, and without further comment, 
use the Cauchy-Schwarz inequality 
\beq
\ip{Y,Z}_k\leq\|Y\|_k\|Z\|_k
\eeq
and the trivial bound $\|Y\|_k\geq \|Y\|_{k'}$ if $k\geq k'$.

In the sequel, we will prove existence of a solution
of the coupled system (\ref{cs}) with $Y\in H^k$, $Y_t\in H^{k-1}$, for $k=3$. This choice of $k$ is
motivated by the following fundamental fact about $\hh^k$ on a compact 2-manifold.

\begin{prop}[Algebra property of $\hh^k$, $k\geq 2$]\label{algebra}
The Banach space $(\hh^k,\|\cdot\|_k)$ is a Banach algebra for all $k\geq 2$. That is, if $f\in\hh^k$ and
$g\in\hh^k$ then $fg\in\hh^k$, and there exists a constant $\alpha_k>0$, depending only on $\Sigma$ and $k$,
such that $\|fg\|_k\leq\alpha_k\|f\|_k\|g\|_k$.
\end{prop}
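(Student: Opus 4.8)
\emph{The plan.} Since $\Sigma$ is a flat torus, I would prove this by Fourier series on $\Sigma=\R^2/\Lambda$; this is elementary and suits the intended audience. Write $\Lambda^*$ for the dual lattice and expand $f=\sum_{n\in\Lambda^*}\hat f_n e_n$ in the characters $e_n(x)=e^{2\pi i n\cdot x}$. The coordinate derivatives $\cd_x,\cd_y$ act on $e_n$ by scalar multiplication, so, up to the normalization of the Fourier coefficients, $\|f\|_k^2=\sum_{n\in\Lambda^*}w_k(n)\,|\hat f_n|^2$ with $w_k(n)=\sum_{|\alpha|\le k}(2\pi)^{2|\alpha|}|n^\alpha|^2$; the polynomial weight $w_k(n)$ is comparable to $(1+|n|)^{2k}$, so
$$
\|f\|_k^2\;\asymp\;\sum_{n\in\Lambda^*}(1+|n|)^{2k}\,|\hat f_n|^2 ,
$$
with implied constants depending only on $k$ and $\Lambda$. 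It therefore suffices to bound the right-hand norm, call it $\|\cdot\|'_k$, on the dense set of trigonometric polynomials.

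\emph{The core estimate.} For trigonometric polynomials, $\widehat{fg}_n=\sum_{m\in\Lambda^*}\hat f_m\,\hat g_{n-m}$. From $1+|n|\le 2\max(1+|m|,1+|n-m|)$ one gets $(1+|n|)^k\le 2^k\big((1+|m|)^k+(1+|n-m|)^k\big)$, hence the pointwise-in-$n$ bound
$$
(1+|n|)^k\,|\widehat{fg}_n|\;\le\;2^k\big[(u*v)_n+(w*z)_n\big],
$$
where $u_m=(1+|m|)^k|\hat f_m|$, $v_m=|\hat g_m|$, $w_m=|\hat f_m|$, $z_m=(1+|m|)^k|\hat g_m|$, and $*$ is convolution on $\Lambda^*$. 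Young's inequality $\|a*b\|_{\ell^2}\le\|a\|_{\ell^2}\|b\|_{\ell^1}$ then gives $\|fg\|'_k\le 2^{k+1}\max(\|u\|_{\ell^2}\|v\|_{\ell^1},\|w\|_{\ell^2}\|z\|_{\ell^1})$. Here $\|u\|_{\ell^2}=\|f\|'_k$ and $\|z\|_{\ell^2}=\|g\|'_k$, while by Cauchy--Schwarz
$$
\|v\|_{\ell^1}=\sum_{m\in\Lambda^*}|\hat g_m|\;\le\;S_k^{1/2}\,\|g\|'_k,\qquad S_k:=\sum_{m\in\Lambda^*}(1+|m|)^{-2k},
$$
and likewise $\|w\|_{\ell^1}\le S_k^{1/2}\|f\|'_k$. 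The sum $S_k$ converges \emph{precisely when} $2k>2=\dim\Sigma$, i.e.\ when $k\ge 2$; this is the single point at which the hypothesis enters. Combining, $\|fg\|'_k\le 2^{k+1}S_k^{1/2}\|f\|'_k\|g\|'_k$, and transferring back to $\|\cdot\|_k$ produces the asserted constant $\alpha_k$.

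\emph{Density and closure.} To finish, choose trigonometric polynomials $f_j\to f$, $g_j\to g$ in $\hh^k$. The identity $f_jg_j-f_ig_i=(f_j-f_i)g_j+f_i(g_j-g_i)$ together with the estimate just proved shows $(f_jg_j)$ is Cauchy in $\hh^k$, hence converges there to some $h$; and since $\hh^k\hookrightarrow C^0(\Sigma)$ for $k\ge 2$ (Sobolev embedding, $k>\dim\Sigma/2$), $f_jg_j\to fg$ uniformly, forcing $h=fg$. Thus $fg\in\hh^k$ and $\|fg\|_k\le\alpha_k\|f\|_k\|g\|_k$ by passing to the limit.

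\emph{On the difficulty.} There is no substantial obstacle: the statement is classical, and the only things needing care are the norm equivalence (immediate from flatness of $\Sigma$) and the convergence of $S_k$, which is exactly the borderline $2k>\dim\Sigma$ and pinpoints why $k\ge2$, rather than $k\ge1$, is needed. For completeness one may note the same conclusion holds on any compact Riemannian $2$-manifold, proved by localizing with a partition of unity and combining the Leibniz rule with the Gagliardo--Nirenberg inequalities $\|D_\beta f\|_{L^{2k/|\beta|}}\le C\|f\|_k$ (which themselves rely on $\hh^k\hookrightarrow L^\infty$) and Hölder's inequality with the critical exponent balance $|\alpha|/(2k)\le 1/2$; the torus argument above is simply the Fourier-analytic shadow of this.
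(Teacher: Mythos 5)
The paper does not prove this Proposition: it quotes it as a standard fact, directing the reader to Donaldson--Kronheimer and references therein. Your proof is therefore not merely a rephrasing but a self-contained replacement, and it is correct. The Fourier-series approach is a good fit here: it exploits flatness of the torus directly, makes the norm equivalence $\|f\|_k^2\asymp\sum(1+|n|)^{2k}|\hat f_n|^2$ transparent, and isolates precisely why $k\ge 2$ is the threshold (convergence of $\sum(1+|m|)^{-2k}$ over a rank-$2$ lattice). The textbook route referenced by the paper instead uses a local partition of unity, the Leibniz rule, and Gagliardo--Nirenberg interpolation, which generalizes to any compact manifold; you correctly note this generalization in your closing remark, so nothing is lost. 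The density-and-uniform-convergence step at the end is the right way to identify the $\hh^k$-limit with the pointwise product and is essential; good that you did not omit it.

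One typographical slip worth fixing: in the display
$\|fg\|'_k\le 2^{k+1}\max\bigl(\|u\|_{\ell^2}\|v\|_{\ell^1},\,\|w\|_{\ell^2}\|z\|_{\ell^1}\bigr)$
the second entry should read $\|w\|_{\ell^1}\|z\|_{\ell^2}$ (apply Young's inequality to $w*z=z*w$ with $z$ in $\ell^2$ and $w$ in $\ell^1$). As written, $\|z\|_{\ell^1}=\sum_m(1+|m|)^k|\hat g_m|$ need not be finite for $g\in\hh^k$. Your subsequent lines evaluate $\|z\|_{\ell^2}$ and $\|w\|_{\ell^1}$, so the intended (and correct) argument is clear; only the displayed formula is inconsistent with it.
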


It follows directly from this that, if $(Y,Y_t)\in H^3\oplus H^2$, then the nonlinear term $j'$ in the
coupled system is in $H^2$, and $\|j'\|_2$ can be bounded by a polynomial in $\|Y\|_3,\|Y_t\|_2$ (the point being
that $j'$ contains no derivatives of $Y$ higher than first, and $Y_x,Y_y,Y_t$ are all in $H^2$). This is crucial,
not only
for proving the local existence result for (\ref{cs}), but also in later sections where we prove that
$\|Y\|_3$ is controlled by $\ip{LY,LLY}$, and make energy estimates for the solution. 
So the fact that we
choose $k=3$ is not just motivated by a desire to get strong bounds on the error
section $Y$; the method will not work for any lower $k$. Indeed, to uniformly bound $Y$ on $\Sigma$, it would
suffice to control $\|Y\|_2$, as we have the following Sobolev inequality.

\begin{prop}[Sobolev inequalities]\label{sobolev}
Let $C^k$ denote the Banach space of continuous maps $\Sigma\ra\R^3$ with the usual norm
$\|Y\|_{C^k}=\sup\{|D_\alpha Y(p)|\: :\: |\alpha|\leq k, p\in \Sigma\}$. Then
$H^2\subset C^0$, $H^3\subset C^1$, and there
is a constant $\alpha>0$, depending only on $\Sigma$, such that $\|Y\|_{C^k}\leq\alpha\|Y\|_{k+2}$ for all $Y\in H^2$, $k=0,1$.
(More briefly, the inclusions $\iota:H^2\ra C^0$ and $\iota:H^3\ra C^1$ are continuous.)
\end{prop}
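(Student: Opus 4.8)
The plan is to prove Proposition \ref{sobolev} by Fourier analysis, which is the natural tool since $\Sigma$ is a flat torus, $\Sigma=\R^2/\Lambda$ for some lattice $\Lambda$. Every $f\in\hh^0=L^2(\Sigma)$ has a Fourier expansion $f=\sum_{\kv\in\Lambda^*}\hat f_\kv\,e^{i\kv\cdot p}$ over the dual lattice $\Lambda^*$ (with a fixed normalization), and, because $\Sigma$ is closed, weak differentiation acts diagonally with no boundary terms: $(\widehat{\cd_j f})_\kv=ik_j\hat f_\kv$. Applying Parseval to each $D_\alpha f$ gives
\[
\|f\|_k^2=\sum_{|\alpha|\le k}\int_\Sigma|D_\alpha f|^2
=\sum_{\kv\in\Lambda^*}w_k(\kv)\,|\hat f_\kv|^2,\qquad
w_k(\kv):=\sum_{|\alpha|\le k}|\kv^\alpha|^2,
\]
and $w_k(\kv)$ is trapped between positive constant multiples of $(1+|\kv|^2)^k$, the constants depending only on $k$. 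Hence $f\in\hh^k$ if and only if $\sum_\kv(1+|\kv|^2)^k|\hat f_\kv|^2<\infty$, with comparable norm.

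First I would establish $\hh^2\subset C^0$. By Cauchy--Schwarz in $\ell^2(\Lambda^*)$,
\[
\sum_{\kv\in\Lambda^*}|\hat f_\kv|
\le\Big(\sum_{\kv}(1+|\kv|^2)^{-2}\Big)^{1/2}
\Big(\sum_{\kv}(1+|\kv|^2)^{2}|\hat f_\kv|^2\Big)^{1/2},
\]
and the first factor is finite precisely because $\Sigma$ is two-dimensional: the number of $\kv\in\Lambda^*$ with $R\le|\kv|<R+1$ is $O(R)$, so $\sum_\kv(1+|\kv|^2)^{-2}$ is dominated by $\int_1^\infty R\cdot R^{-4}\,dR<\infty$. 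It follows that the Fourier series converges absolutely and uniformly, hence represents a continuous function agreeing with $f$ a.e., and $\|f\|_{C^0}\le\sum_\kv|\hat f_\kv|\le C(\Lambda)\,\|f\|_2$. This last step is the only one requiring any care rather than being a genuine obstacle: the whole argument turns on the convergence of $\sum_{\kv\in\Lambda^*}(1+|\kv|^2)^{-2}$, which is where the two-dimensionality of $\Sigma$, and the choice $k=2$ (not $k=1$), enters.

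For $\hh^3\subset C^1$ I would simply apply the previous step to first derivatives: if $f\in\hh^3$ and $|\alpha|\le1$ then $D_\alpha f\in\hh^2$ with $\|D_\alpha f\|_2\le\|f\|_3$, so each $D_\alpha f$ agrees a.e.\ with a continuous function and $\|D_\alpha f\|_{C^0}\le C(\Lambda)\|f\|_3$; summing over $|\alpha|\le1$ gives $f\in C^1$ with $\|f\|_{C^1}\le C'(\Lambda)\|f\|_3$. The $\R^3$-valued statements for $H^k=(\hh^k)^3$ and $C^k$ then follow at once by applying the scalar bounds componentwise, and enlarging $\alpha$ to absorb the $\Lambda$-dependent constants from both $k=0$ and $k=1$ finishes the proof. (An alternative route, covering $\Sigma$ by finitely many Euclidean charts and invoking the classical embeddings $W^{2,2}(\R^2)\hookrightarrow C^0_b$, $W^{3,2}(\R^2)\hookrightarrow C^1_b$ together with a partition of unity, would also work, but is less clean on a flat torus.)
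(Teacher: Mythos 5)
Your proof is correct. The paper itself does not prove Proposition \ref{sobolev}: it is stated without proof as one of several ``standard functional analytic results'' with a blanket reference to \cite{donkro}, so there is no proof in the paper to compare against. Your Fourier-analytic argument is the natural and most elementary route on a flat torus: Parseval gives the equivalence $\|f\|_k^2\asymp\sum_{\kv\in\Lambda^*}(1+|\kv|^2)^k|\hat f_\kv|^2$, Cauchy--Schwarz reduces $\hh^2\hra C^0$ to the convergence of $\sum_\kv(1+|\kv|^2)^{-2}$, and the $O(R)$ count of dual-lattice points in an annulus makes that sum finite precisely because $\dim\Sigma=2$; absolute and uniform convergence of the Fourier series then yields continuity and the bound $\|f\|_{C^0}\le C(\Lambda)\|f\|_2$, and applying this to first derivatives gives $\hh^3\hra C^1$ (the uniformly convergent differentiated series justifies genuine, not merely weak, $C^1$ regularity). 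Passing to $\R^3$-valued maps componentwise is immediate. A general-manifold reference like \cite{donkro} would instead run through localization, extension, and the classical Euclidean embeddings $W^{k,2}(\R^2)\hra C^{k-2}_b$; on a flat torus your direct Fourier computation is cleaner and makes the role of the dimension and of the exponent $k=2$ completely transparent, as you note.
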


In later sections we will need to bound $\|L_\psi Y\|_k$ in terms of $\|Y\|_{k+2}$. Of course, since $L_\psi$
is a linear second order operator we have trivially, for all $q\in K$, the upper bound
\beq
\|L_\psi Y\|_k\leq C\|Y\|_{k+2}
\eeq
where $C$ is a constant depending only on $\Sigma$ and $K$. For a lower bound, we use the fact that
$L_\psi$ is elliptic.

\begin{prop}[Standard elliptic estimate]\label{elliptic}
Let $D$ be an elliptic linear differential operator of order $r$ acting on sections of a vector
bundle $V$ over $\Sigma$. Then
there exist constants $\alpha_k,\beta_k$ depending only on $\Sigma$ and $k$, such that
$$
\|DY\|_k+\alpha_k\|Y\|_0\geq \beta_k\|Y\|_{k+r}.
$$
If we consider only sections which are $L^2$ orthogonal to $\ker D$, the same inequality holds with $\alpha_k=0$.
\end{prop}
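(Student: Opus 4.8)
This is the classical elliptic a priori estimate, and I would prove it in the standard two-step fashion, taking advantage of the fact that $\Sigma=T^2$ carries global Cartesian coordinates.

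\emph{The first inequality.} I would start from the constant-coefficient model problem. If $D_0$ has constant coefficients and principal symbol $\sigma_r(\xi)$, ellipticity together with homogeneity of $\sigma_r$ and compactness of the unit sphere gives a uniform bound $|\sigma_r(\xi)v|\geq c|\xi|^r|v|$; expanding $Y$ in a Fourier series over the dual lattice of $\Lambda$ and applying Plancherel then yields $\|D_0Y\|_k^2+\|Y\|_0^2\geq c'\|Y\|_{k+r}^2$ at once. For the variable-coefficient operator $D$ one freezes coefficients: cover $\Sigma$ by finitely many small coordinate squares, on each of which the principal part of $D$ differs from a suitable constant-coefficient elliptic $D_0$ by an operator whose top-order coefficients are as small as we like; apply the model estimate to $\chi_iY$ for a subordinate partition of unity $\{\chi_i\}$; and sum over $i$. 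The error terms produced — the commutators $[D,\chi_i]$ and the perturbations $(D-D_0)(\chi_iY)$ — involve at most $k+r-1$ derivatives of $Y$, so they are controlled by $\delta\|Y\|_{k+r}+C_\delta\|Y\|_{k+r-1}$. Absorb $\delta\|Y\|_{k+r}$ into the left-hand side, and dispose of the genuinely lower-order term with the interpolation (Ehrling) inequality $\|Y\|_{k+r-1}\leq\delta'\|Y\|_{k+r}+C_{\delta'}\|Y\|_0$, absorbing $\delta'\|Y\|_{k+r}$ as well. What survives is precisely $\|DY\|_k+\alpha_k\|Y\|_0\geq\beta_k\|Y\|_{k+r}$.

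\emph{The refinement on $(\ker D)^\perp$.} To upgrade to $\alpha_k=0$ for sections $L^2$-orthogonal to $\ker D$, I would argue by contradiction using compactness. First, $\ker D$ is finite-dimensional: its elements are smooth by elliptic regularity, the first inequality bounds their $H^{k+r}$ norm by their $L^2$ norm, and the compact embedding $H^{k+r}\subset L^2$ (Rellich) then makes the $L^2$ unit ball of $\ker D$ compact. Now suppose no $\beta_k>0$ works with $\alpha_k=0$; choose $Y_j\in(\ker D)^\perp$ with $\|Y_j\|_{k+r}=1$ and $\|DY_j\|_k\to0$. By Rellich a subsequence converges in $H^{k+r-1}$, hence in $L^2$; applying the first inequality to the differences $Y_j-Y_{j'}$ shows the sequence is Cauchy in $H^{k+r}$, so $Y_j\to Y$ in $H^{k+r}$ with $\|Y\|_{k+r}=1$, $DY=0$ and $Y\in(\ker D)^\perp$. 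But then $Y\in\ker D$ forces $Y=0$, a contradiction.

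\emph{Main obstacle.} None of this is deep — the statement is entirely standard (see \cite{donkro} and references therein) — and the only point needing genuine care is the freezing-of-coefficients localization and the attendant bookkeeping of constants. In the application $D=L_{\psi(q)}$ with $q$ ranging over the fixed compact set $K$; since the coefficients of $L_{\psi(q)}$ are polynomial in $\psi,\psi_x,\psi_y$, they vary uniformly continuously over $K\times\Sigma$, so the localization runs with all constants uniform in $q\in K$. It is this uniformity that is actually invoked later; the clean statement above, with $\alpha_k,\beta_k$ depending only on $\Sigma$ and $k$, is to be read with the operator $D$ (equivalently, here, the set $K$) held fixed.
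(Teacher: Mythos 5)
The paper does not prove Proposition~\ref{elliptic}: it is quoted in Section~\ref{sec:prelim} as a standard fact, with a pointer to \cite{donkro} and references therein, so there is no internal argument to compare against. Your proof is the canonical one for exactly this situation (flat torus, hence global Fourier series for the constant-coefficient model; freezing of coefficients and a partition of unity plus Ehrling interpolation for the general case; and a Rellich-based compactness/contradiction argument to pass to $\alpha_k=0$ on $(\ker D)^\perp$), and it is correct as written. Your closing remark is also well taken: as literally stated the constants cannot depend only on $\Sigma$ and $k$ --- they depend on $D$ through its ellipticity constant and coefficient bounds --- and the way this is actually used in the paper is for $D=L_{\psi(q)}$ with $q$ ranging over the fixed compact set $K$, where the coefficients are smooth in $q$ and so all constants can be taken uniform over $K$. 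Making that $D$-dependence explicit, as you do, is a genuine improvement on the statement rather than a deviation from the paper's (absent) proof.
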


The reason for quoting this result in the context of a general
vector bundle $V$ over $\Sigma$ is that we will want to apply
it to both the ordinary Laplacian on $V=\ul{\R^3}$, and the classical Jacobi operator $J_\psi$ on
$V=\psi^{-1}TS^2\subset\ul{\R^3}$, where the $H^k$ norm is defined by inclusion.

\section{Local existence theorem}\news
\label{sec:locex}

\begin{thm}[Local existence for the coupled system]\label{locex}
Consider the coupled system (\ref{cs}) with initial data $q(0)=q_0\in K$, $q_t(0)=\eps q_1\in\R^{4n}$,
$Y(0)=Y_0\in H^3$, $Y_t(0)=Y_1\in H^2$ such that
$$
\dist(q_0,\cd K)>d,\qquad
|q_1|,\|Y_0\|_3,\|Y_1\|_2<\Gamma
$$
where $\Gamma,d$ are positive constants. Then there exist constants $C(K)>0$ and $T(K,\Gamma,d)>0$ such that
for all $\eps\in (0,C(K)/\sqrt{\Gamma})$, this initial value problem has a unique solution on $[0,T]$ with
\ben
q&\in& C^3([0,T],K)\\
Y&\in&C^0([0,T],H^3)\cap C^1([0,T],H^2)\cap C^2([0,T],H^1).
\een
If the initial data are tangent to the $L^2$ orthogonality constraint (\ref{oc}) and the pointwise constraint 
(\ref{pc}) then the solution preserves these constraints.
\end{thm}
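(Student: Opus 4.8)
The plan is to recast the coupled system (\ref{cs}) as a first-order-in-$t$ evolution equation on a Hilbert space and solve it by the contraction mapping principle, handling the wave equation for $Y$ through Duhamel's formula relative to a \emph{fixed}, $t$-independent reference operator whose principal part agrees with that of $L_\psi$. Write $V=Y_t$, $p=q_t$ and take as phase space $\mathcal{X}=H^3\oplus H^2\oplus\R^{4n}\oplus\R^{4n}$, with the first $\R^{4n}$ factor (holding $q$) restricted to the interior of $K$. One first disentangles the $q$-equation: rewritten in terms of $t$, the second equation of (\ref{cs}) reads
\beq
\left(\delta^\mu_\lambda-\eps^2\gamma^{\mu\nu}\ip{Y,\psi_{\nu\lambda}}\right)q^\lambda_{tt}=-G^\mu_{\nu\lambda}q^\nu_t q^\lambda_t+\eps^3 h^\mu .
\eeq
Since $\psi_{\nu\lambda}(q,\cdot)\in C^\infty(\Sigma)$ and $\gamma^{-1}$ is bounded uniformly over the compact set $K$, the matrix on the left is $\id+O(\eps^2\|Y\|_3)$, hence invertible as soon as $\eps^2\|Y\|_3\le c(K)$ --- and this, once $\|Y\|_3$ has been bounded by a multiple of $\Gamma$ on the ball on which the iteration runs, is exactly where the hypothesis $\eps<C(K)/\sqrt{\Gamma}$ enters. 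On that region we may solve algebraically, $q_{tt}=F_q(q,p,Y,V,\eps)$, where --- using (\ref{hdef}), the smoothness of $q\mapsto\psi(q,\cdot)$ and its $q$-derivatives as maps $K\to C^\infty(\Sigma)$, and Proposition \ref{algebra} to make sense of the $L^2$ pairings against $Y$ and $Y_t$ --- $F_q$ is bounded on bounded subsets of $\mathcal{X}$ and locally Lipschitz there. Substituting this expression for $\ddot q$ into $k=-(\psi_{\tau\tau}+|\psi_\tau|^2\psi)$, the first equation of (\ref{cs}) takes the form $Y_{tt}+L_0Y=G$, with $L_0=-\Delta+1$ acting componentwise on $\R^3$-valued functions (so $L_0$ is $t$-independent, self-adjoint, strictly positive, with $\mathrm{Dom}(L_0^{k/2})=H^k$) and $G=(L_0-L_\psi)Y+k+\eps j'$. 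The facts that make this work --- precisely the ones highlighted in Sections \ref{sec:proj} and \ref{sec:prelim} --- are that $L_0-L_\psi$ is a \emph{first order} operator (both $L_0$ and $L_\psi$ have principal part $-\Delta$) with coefficients smooth in $q$, and that $j'$ is polynomial in $Y,Y_x,Y_y,Y_t$ with no derivatives of $Y$ beyond first order; together with the Banach algebra property of $H^2$ (Proposition \ref{algebra}) this gives $G:\{(Y,V,q,p):q\in K\}\to H^2$ bounded on bounded sets and locally Lipschitz, with $\|G\|_2$ dominated by a polynomial in $\|Y\|_3,\|V\|_2,|p|$ whose coefficients depend only on $K$.

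Next one runs Picard's method. On the complete metric space $\mathcal{C}_T=\{W\in C([0,T],\mathcal{X}):\|W(t)-W(0)\|_{\mathcal{X}}\le R\text{ and }q(t)\in K\text{ for all }t\}$ --- with $R$ fixed comfortably larger than the initial data (so $R\sim\Gamma$), the restriction on $q$ being sustainable for short time because $\dist(q_0,\cd K)>d$ --- define $\mathcal{T}W$ by $q(t)=q_0+\int_0^t p$, $p(t)=\eps q_1+\int_0^t F_q$, and, for the wave part,
\beq
Y(t)=\cos(t\sqrt{L_0})\,Y_0+\frac{\sin(t\sqrt{L_0})}{\sqrt{L_0}}\,V_0+\int_0^t\frac{\sin((t-s)\sqrt{L_0})}{\sqrt{L_0}}\,G(s)\,ds ,
\eeq
with $V=Y_t$ obtained by differentiation and $G(s),F_q(s)$ evaluated along $W(s)$. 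By the spectral theorem $\cos(t\sqrt{L_0})$ and $\sin(t\sqrt{L_0})/\sqrt{L_0}$ are bounded on every $H^k$ uniformly for $t\in[0,T]$ (the latter in fact gaining one derivative, which supplies a small factor $T$), so $\mathcal{T}$ maps $C([0,T],H^3\oplus H^2)$ into itself \emph{with no loss of derivatives}, $G(s)$ lying in $H^2$. Since $G$ and $F_q$ are bounded and Lipschitz on the relevant sets, $\mathcal{T}:\mathcal{C}_T\to\mathcal{C}_T$ and is a contraction for $T=T(K,\Gamma,d)$ small; its unique fixed point is the solution. Differentiating the integral equations and reading off $Y_{tt}=-L_\psi Y+k+\eps j'$ yields the stated regularity ($Y\in C^0([0,T],H^3)\cap C^1([0,T],H^2)\cap C^2([0,T],H^1)$, and $q\in C^3([0,T],K)$ by bootstrapping the $t$-regularity of $F_q$ along the solution), and uniqueness is built into the contraction.

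It remains to show the constraints are preserved. For (\ref{oc}): the derivation of the $q$-equation in Section \ref{sec:proj} shows that, \emph{given} that $Y$ solves the first equation of (\ref{cs}), and using only self-adjointness of $L_\psi$, the inclusion $\psi_\mu\in\ker J_\psi\subset\ker L_\psi$, and the pointwise relation $\psi\cdot\psi_\mu=0$, the second equation of (\ref{cs}) is \emph{equivalent} to $\frac{d^2}{d\tau^2}\ip{Y,\psi_\mu}=0$. Hence $\chi_\mu(t):=\ip{Y(t),\psi_\mu(q(t))}$ satisfies $\ddot\chi_\mu\equiv0$, and with $\chi_\mu(0)=0$ (data satisfy (\ref{oc})) and $\dot\chi_\mu(0)=0$ (data tangent to (\ref{oc})) we get $\chi_\mu\equiv0$. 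For (\ref{pc}): set $\phi=\psi(q)+\eps^2Y$ and $u=|\phi|^2-1=2\eps^2(\psi\cdot Y+\tfrac12\eps^2|Y|^2)$. Substituting $\phi=\psi+\eps^2Y$ into the wave map equation (\ref{wavemapeq}) and using only harmonicity of $\psi$ reproduces (\ref{YPDE1}); the first equation of (\ref{cs}) differs from (\ref{YPDE1}) only in that $A_\psi^\dagger Y$ has been replaced by the explicit expression in (\ref{jhatdef}), and since $A_\psi^\dagger$ depends on its argument only through $\psi\cdot(\,\cdot\,)$, the difference between these equals $(u\,\Delta\psi+u_x\psi_x+u_y\psi_y)$ up to a fixed numerical factor, which vanishes where $u=0$. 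Thus $\phi$ solves (\ref{wavemapeq}) modulo a term linear and homogeneous in $u,u_x,u_y$, and a direct computation of $u_{tt}-\Delta u=2\phi\cdot(\phi_{tt}-\Delta\phi)+2(|\phi_t|^2-|\phi_x|^2-|\phi_y|^2)$ shows that $u$ satisfies a linear homogeneous wave-type equation $u_{tt}-\Delta u=a\,u+b^i\cd_i u$ with coefficients $a,b^i$ continuous in $t$ with values in $C^0(\Sigma)$ (built from $\phi,\phi_t,\psi$ and their first derivatives, all controlled by the solution just constructed). With $u(0)=0$ and $u_t(0)=0$ (data satisfy and are tangent to (\ref{pc})), a standard energy estimate and Gr\"onwall's inequality force $u\equiv0$, which is exactly (\ref{pc}). (In the situation of Theorem \ref{lilcol} this is all vacuous, since $Y_0=Y_1=0$ there.)

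The step I expect to be the genuine obstacle is getting the reformulation to close \emph{without loss of derivatives}. This rests on two points: (a) exploiting that $j'$, hence all of $G$, contains no derivatives of $Y$ of order higher than one, so that Proposition \ref{algebra} puts $G$ into $H^2$ with norm controlled by $\|Y\|_3,\|Y_t\|_2$; and (b) subtracting off the \emph{fixed} operator $L_0$ with the same principal symbol as $L_\psi$, so that the genuinely $t$-dependent part $L_0-L_\psi$ of the spatial operator is merely first order and can be folded into the forcing --- which replaces the delicate construction of an evolution family for $L_{\psi(q(t))}$ by the single, explicitly understood wave group of $L_0$. By contrast, solving algebraically for $\ddot q$ (and observing that invertibility of the relevant matrix forces $\eps\lesssim\Gamma^{-1/2}$) and the constraint-propagation argument are comparatively routine, once the derivation in Section \ref{sec:proj} is in hand.
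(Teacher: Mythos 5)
Your proposal is correct and follows the same overall strategy as the paper: solve the $q$-equation algebraically for $\ddot q$ (observing that invertibility of the matrix $M={\rm Id}-\eps^2\gamma^{-1}\ip{Y,\psi_{\cdot\cdot}}$ is what forces $\eps\lesssim\Gamma^{-1/2}$), fold the $t$-dependent, first- and zeroth-order part of $L_\psi$ into the forcing so the linear part of the wave equation is fixed, and close a Picard-type argument at the level $Y\in H^3$, $Y_t\in H^2$ using the algebra property of $\hh^2$. The constraint-preservation arguments (second-order ODE for $\ip{Y,\psi_\mu}$; a linear homogeneous wave equation plus Gr\"onwall for the pointwise constraint) are likewise the same as the paper's, only phrased in terms of $u=|\phi|^2-1$ rather than $\chi=Y\cdot\psi+\tfrac12\eps^2|Y|^2$, which differ by the factor $2\eps^2$.

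The one genuine difference is technical: for the wave part the paper builds explicit Picard iterates $(q^i,Y^i)$, smoothed at the level of the initial data, and controls each iterate via the a priori energy estimate of Theorem \ref{waveest} for $\cd_t^2-\Delta$, whereas you write a single fixed-point map using Duhamel's formula with the wave group $\cos(t\sqrt{L_0})$, $\sin(t\sqrt{L_0})/\sqrt{L_0}$ of the fixed operator $L_0=-\Delta+1$, invoking the spectral theorem to see these are bounded on each $H^k$ (with a one-derivative gain for the sine propagator). The propagator route buys you a cleaner contraction on $C([0,T],H^3\oplus H^2\oplus\R^{4n}\oplus\R^{4n})$ without the smooth-approximation step, at the modest cost of invoking functional calculus; the paper's route is more elementary, relying only on the classical existence result for the driven wave equation and an explicit Gr\"onwall-type estimate. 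Two small points you gloss over, both handled explicitly in the paper: (i) the ball in your $\mathcal{C}_T$ must have radius a fixed multiple of $\Gamma$ (not merely control the increment $W(t)-W(0)$), because the homogeneous propagator terms contribute $O(\Gamma)$ to $\|Y(t)\|_3$ even at $t=0^+$; and (ii) upgrading $q\in C^2$ to $q\in C^3$ by differentiating $q_{tt}=F_q$ requires knowing that the differential of $F_q$ extends continuously when its $(Y,Y_t)$-slot is fed $(H^1,L^2)$-valued arguments, since along the solution $Y_{tt}$ only lies in $H^1$ --- this is precisely the content of Proposition \ref{fgdiffprop}, and is not automatic.
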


We will prove this using Picard's method: we iteratively define a sequence $(q^i,Y^i)\in C^0([0,T],K\times H^3)$
which converges to a solution of the initial value problem. To establish that the iteration scheme is
well-defined and convergent, the following standard energy estimate for the driven wave equation is key:

\begin{thm}[Existence and energy estimate for the wave equation]\label{waveest}
The driven wave equation on $[0,T]\times\Sigma$
$$
Y_{tt}-\Delta Y=\Xi
$$
with $\Xi:[0,T]\times\Sigma\ra\R^3$  smooth and smooth initial data $Y_0=Y(0)$ and $Y_1=Y_t(0)$ has a unique global
solution. The solution is smooth, and there exists an absolute constant $c(\Sigma)\geq 1$, depending only on the
choice of torus $\Sigma$, such that
$$
\max\{\|Y_t(t)\|_2,\|Y(t)\|_3\}\leq c(\Sigma)e^{t}\left\{\|Y_1\|_2+\|Y_0\|_3+\left(\int_0^t\|\Xi(s)\|_2^2ds
\right)^\frac12\right\}.
$$
\end{thm}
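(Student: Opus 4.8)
The plan is to obtain existence, uniqueness and smoothness by Fourier analysis on the torus, and then to prove the quantitative bound by a higher-order energy identity combined with Gr\"onwall's inequality. Writing $\Sigma=\R^2/\Lambda$, one expands $Y(t,\cdot)$ and $\Xi(t,\cdot)$ in Fourier series over the dual lattice $\Lambda^*$; each mode $\wh Y_k(t)$ then satisfies the driven oscillator ODE $\wh Y_k''+|k|^2\wh Y_k=\wh\Xi_k$, solved explicitly by Duhamel's formula (with the obvious replacement $\wh Y_0(t)=\wh Y_0(0)+t\wh Y_0'(0)+\int_0^t(t-s)\wh\Xi_0(s)\,ds$ for the zero mode). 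Since $Y_0$, $Y_1$ and each $\Xi(s,\cdot)$ are smooth, their Fourier coefficients decay faster than any power of $|k|$, uniformly for $s\in[0,T]$, so the series for $Y$ and for every space-time derivative of $Y$ converge uniformly; hence $Y$ is smooth and solves the equation classically. Uniqueness is immediate: the difference of two solutions has vanishing data and forcing, so all its Fourier coefficients vanish.

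For the energy bound, the one place where compactness of $\Sigma$ genuinely intrudes is that the bare wave energy $\frac12\int_\Sigma(|Y_t|^2+|\nabla Y|^2)$ does not control $\|Y\|_0$ because of the constant (zero) mode, which in fact can grow linearly in $t$. We therefore augment the energy. Since $\Sigma$ is flat, $\Delta$ commutes with every $D_\alpha$, so $D_\alpha Y$ solves $(D_\alpha Y)_{tt}-\Delta(D_\alpha Y)=D_\alpha\Xi$. For each multi-index $\alpha$ with $|\alpha|\le2$ set
\[
\mathcal{E}_\alpha(t)=\frac12\int_\Sigma\Big(|D_\alpha Y_t|^2+|\nabla D_\alpha Y|^2+|D_\alpha Y|^2\Big),\qquad
\mathcal{E}(t)=\sum_{|\alpha|\le2}\mathcal{E}_\alpha(t).
\]
Differentiating in $t$, integrating by parts (legitimate by the smoothness just established) and using the equation for $D_\alpha Y$ gives $\dot{\mathcal{E}}_\alpha=\int_\Sigma D_\alpha Y_t\cdot D_\alpha\Xi+\int_\Sigma D_\alpha Y\cdot D_\alpha Y_t$; bounding each term by Cauchy--Schwarz and $2ab\le a^2+b^2$ yields $\dot{\mathcal{E}}_\alpha\le2\mathcal{E}_\alpha+\frac12\|D_\alpha\Xi\|_0^2$, and summing over $|\alpha|\le2$ gives $\dot{\mathcal{E}}\le2\mathcal{E}+\frac12\|\Xi\|_2^2$. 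Gr\"onwall's inequality then gives $\mathcal{E}(t)\le e^{2t}\big(\mathcal{E}(0)+\frac12\int_0^t\|\Xi(s)\|_2^2\,ds\big)$.

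Finally, $\mathcal{E}(t)$ is equivalent to $\|Y(t)\|_3^2+\|Y_t(t)\|_2^2$ with equivalence constants depending only on $\Sigma$ (indeed only on the dimension, through the combinatorics of multi-indices): the $\|D_\alpha Y\|_0^2$ with $|\alpha|\le2$ together with the $\|\nabla D_\alpha Y\|_0^2$ with $|\alpha|\le2$ bound, and are bounded by, the $L^2$ norms of all derivatives of $Y$ of order $\le3$, while the $\|D_\alpha Y_t\|_0^2$ with $|\alpha|\le2$ do the same for $Y_t$ up to order $2$. Taking square roots of the Gr\"onwall estimate and using $\sqrt{u+v+w}\le\sqrt u+\sqrt v+\sqrt w$ produces the stated inequality, with $c(\Sigma)$ essentially the square root of the equivalence ratio, enlarged if necessary so that $c(\Sigma)\ge1$.

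I do not anticipate a serious obstacle, since this result is classical; the only point requiring a little care -- and a real feature of the compact domain rather than a technicality -- is the zero mode. One cannot use the bare wave energy and must instead include the undifferentiated term $|D_\alpha Y|^2$ in $\mathcal{E}_\alpha$, which is precisely what forces the (necessary, and sharp up to constants) exponential factor $e^t$ even when $\Xi\equiv0$.
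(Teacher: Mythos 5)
Your proof is correct, but it takes a genuinely different route from the paper's. For existence, uniqueness and smoothness the paper simply cites John's textbook, whereas you reconstruct everything from Fourier series on $\Sigma=\R^2/\Lambda$ — both are fine, and your argument has the advantage of being self-contained. For the energy estimate, the paper first establishes the base bound for $E(t)=\|Y_t\|_0^2+\|Y\|_1^2$ (note that, like you, it augments the bare wave energy with $\|Y\|_0^2$ precisely to absorb the zero mode, at the cost of the factor $e^{2t}$), then \emph{re-applies} that same bound to $Z=\Delta Y$, which solves the wave equation with source $\Delta\Xi$, and finally upgrades $\|\Delta Y\|_1$ to $\|Y\|_3$ (and $\|\Delta Y_t\|_0$ to $\|Y_t\|_2$) via the elliptic estimate of Proposition \ref{elliptic}. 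You instead commute $D_\alpha$ directly through the wave operator (legitimate since $\Sigma$ is flat) and build one combined $H^3\times H^2$ energy $\mathcal{E}$, then apply Gr\"onwall once. The content is the same — both exploit flatness, once through $[\Delta,\Delta]=0$ plus elliptic regularity, once through $[D_\alpha,\Delta]=0$ — but your version is slightly more elementary (no elliptic estimate needed), while the paper's version more transparently generalizes to settings where one only wishes to commute with a single natural elliptic operator rather than with all coordinate derivatives. Your remark that the zero mode is the genuine reason for the $e^t$ factor and for including $\|Y\|_0^2$ in the energy is a correct and worthwhile observation, implicit but not spelled out in the paper.
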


\begin{proof} Existence, uniqueness and smoothness follow from \cite{joh}. Let 
$E(t)=\|Y_t(t)\|_0^2+\|Y(t)\|_1^2$. Then
\bea
E'(t)&=&2\ip{Y_t,\Delta Y+\Xi}_0+2\ip{Y,Y_t}_1
=2\ip{Y_t,\Xi}_0+2\ip{Y_t,Y}_0\nonumber\\
&\leq& 2\|Y_t\|_0^2+\|Y\|_0^2+\|\Xi\|_0^2
\leq 2E(t)+\|\Xi(t)\|_0^2\label{carkir}\\
\Rightarrow\quad\frac{d\:}{dt}(e^{-2t}E(t))&\leq&e^{-2t}\|\Xi(t)\|_0^2\leq\|\Xi(t)\|_0^2\nonumber\\
\label{E1}
\Rightarrow\quad E(t)&\leq&e^{2t}\left[E(0)+\int_0^t\|\Xi\|_0^2\right].
\eea
Now consider $Z=\Delta Y$. This is also smooth and satisfies the wave equation with source $\Delta\Xi$. Applying the above estimate to $Z$ yields
\ben
\|\Delta Y_t\|_0^2+\|\Delta Y\|_1^2&\leq& e^{2t}\left[\|\Delta Y_1\|_0^2+\|\Delta Y_0\|_1^2+
\int_0^t\|\Delta\Xi\|_0^2\right]
\leq 2e^{2t}\left[\|Y_1\|_2^2+\|Y_0\|_3^2+
\int_0^t\|\Xi\|_2^2\right].
\een
Since $\Delta$ is an elliptic operator, there exist positive constants $\alpha_k,\beta_k$ depending only on $k$ and $\Sigma$ such that
$$
\|\Delta Y\|_k^2+\alpha_k\|Y\|_0^2\geq \beta_k\|Y\|_{k+2}^2,
$$
by the standard elliptic estimate, Proposition \ref{elliptic}. The result immediately follows.
\end{proof}

To prove Theorem \ref{locex} we must first write the coupled system (\ref{cs}) as an explicit evolution system 
(note that both equations have $\ddot{q}$ on the right hand side). Let $X=\R^{4n}\times\R^{4n}\times H^3\times
H^2$ given the norm $\|(q,p,Y,Z)\|_X=\max\{|q|,\eps^{-1}|p|,\|Y\|_3,\|Z\|_2\}$. The $\eps$ dependence of the norm
is chosen so that $\|(0,q_t,0,0)\|_X=\dot{q}$.
Given any $\Gamma>0$ 
let $X_\Gamma=\{(q,p,Y,Z)\in X\: :\: q\in K,\|(0,p,Y,Z)\|\leq 8c(\Sigma)\Gamma\}$ where $c(\Sigma)\geq 1$ is
the absolute constant obtained from Theorem \ref{waveest}. Note that $X_\Gamma$ is a closed subset of a Banach space, and hence is a complete metric space with respect to the metric induced by $\|\cdot\|_X$.
Consider the matrix valued function $M:\R\times X_\Gamma\ra End(\R^{4n})$,
\beq\label{suspow}
M(\eps,q,Y)^\mu_{\: \nu}=\delta^\mu_{\: \nu}-\eps^2\gamma^{\mu\lambda}\ip{Y,\psi_{\lambda\nu}}.
\eeq
Since the matrix $(\gamma^{\mu\lambda})$ is postive definite, $K$ is compact, all $q$-dependence is smooth, and
$\|Y\|_0\leq 8c(\Sigma)\Gamma$ there exists a constant $c(K)>0$ such that $M:[0,c(K)/\sqrt{\Gamma}]\times X_\Gamma
\ra GL(4n,\R)$ and $M^{-1}:[0,c(K)/\sqrt{\Gamma}]\times X_\Gamma
\ra GL(4n,\R)$ is $C^1$ and bounded. Hence, for all $\eps\in[0,\eps_*(K,\Gamma)]$, where $\eps_*=c(K)/\sqrt{\Gamma}$
the coupled system can be rewritten
\bea
q_{tt}&=&\eps^2 f(\eps,q,q_t,Y,Y_t)\label{qf}\\
Y_{tt}-\Delta Y&=& g(\eps,q,q_t,Y,Y_t)\label{Yg}
\eea
where 
\bea
f(\eps,q,q_t,Y,Y_t)&=&M^{-1}(\eps,q,Y)(-G(q,q_t,q_t)+\eps h(\eps,q,\eps^{-1}q_t,Y,Y_t))\nonumber\\
G(q,u,v)^\mu&=&G^\mu_{\nu\lambda}(q)u^\nu v^\lambda\label{lato}\\
g(\eps,q,q_t,Y,Y_t)&=&-B_\psi Y-\psi_\mu f^\mu-\psi_{\mu\nu}
\frac{q^\mu_t}{\eps}\frac{q^\nu_t}{\eps}+\eps j'(\eps,q,\eps^{-1}q_t,Y,Y_t),\label{Gdef}
\eea
$B_\psi$ denotes the first and zeroth order piece of $L_\psi$, so $L_\psi=-\Delta+B_\psi$, explicitly
\bea
B_\psi Y&=& -(|\psi_x|^2+|\psi_y|^2)Y-2(\psi_x\cdot Y_x+\psi_y\cdot Y_y)\psi-2(\psi\cdot Y)\Delta\psi\nonumber\\
&&-2(\psi\cdot Y)_x\psi_x-2(\psi\cdot Y)_y\psi_y,\label{Bdef}
\eea
 and $h$ and
$j'=j+\wh{j}$ are as defined in (\ref{hdef}), (\ref{jdef}), (\ref{jhatdef}).

It is convenient henceforth to consider $\eps$ as a fixed parameter in $[0,\eps_*(K,\Gamma)]$
and supress the dependence of $f,g$ on $\eps$. The proof of existence will use Picard's method, which requires that $f,g$ 
be bounded and Lipschitz on $X_\Gamma$. This follows quickly from the following proposition, whose proof is straightforward but
lengthy, and so is deferred to the appendix:
\begin{proposition}\label{fgprops}
The functions $f,g$ are continuously differentiable maps $f:X_\Gamma\ra\R^{4n}$ and $g:X_\Gamma\ra H^2$.
Their differentials $\d f: X_\Gamma\ra {\cal L}(X,\R^{4n})$, $\d g: X_\Gamma\ra {\cal L}(X,H^2)$ are bounded, uniformly in $\eps$.
That is, there exist constants $\Lambda_f(K,\Gamma),\Lambda_g(K,\Gamma)>0$ such that
$$
|\d f_x \omega|\leq \Lambda_f\|\omega\|_X,\qquad
\|\d f_x \omega\|_2\leq \Lambda_g\|\omega\|_X
$$
for all $x\in X_\Gamma$, $\omega\in X$.
\end{proposition}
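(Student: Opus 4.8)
The plan is to exhibit $f$ and $g$ as finite combinations of a handful of elementary ``building blocks'', each of which is visibly a $C^1$ map between the relevant Banach spaces with differential bounded on $X_\Gamma$ uniformly in $\eps$, and then to assemble them using the routine facts that such maps are closed under sums, products in a Banach algebra, composition, and bounded multilinear operations. There are only three analytic inputs. First, the algebra property (Proposition \ref{algebra}): pointwise multiplication is a \emph{bounded bilinear}, hence $C^\infty$, map $\hh^2\times\hh^2\ra\hh^2$, and together with the continuous inclusion $H^3\hra H^2$ it is a bounded bilinear map $H^3\times H^2\ra H^2$. Second, the Sobolev inclusions (Proposition \ref{sobolev}), which let us regard $Y_x,Y_y$ (for $Y\in H^3$) and $Y_t\in H^2$ as elements of $H^2$ depending boundedly and linearly on $(Y,Y_t)\in H^3\times H^2$. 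Third, the triviality that integration against a \emph{fixed} $L^2$ section is a bounded linear functional, which covers all the $\ip{\cdot,\psi_\nu}$-type expressions in $h$. Because $q$ ranges over the \emph{compact} set $K$ on which $\psi$ depends smoothly, all $q$-dependent quantities ($\psi_\mu,\psi_{\mu\nu},\psi_{\mu\nu\lambda},\Delta\psi,\psi_x,\psi_y$, and $\gamma_{\mu\nu},\gamma^{\mu\nu},G^\mu_{\nu\lambda}$) are bounded, along with all their $q$-derivatives, in $H^k$ for every $k$; hence any term built as a $C^\infty$ function of $q$ times a monomial in $Y,Y_x,Y_y,Y_t,\dot q$, possibly composed with one of the linear functionals above, is automatically $C^1$ with $X_\Gamma$-bounded differential.

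Carrying this out, one first disposes of the matrix factor. Write $M=\id-\eps^2N(q,Y)$ with $N(q,Y)^\mu_{\: \nu}=\gamma^{\mu\lambda}(q)\ip{Y,\psi_{\lambda\nu}(q)}$; on $X_\Gamma$ one has $|N|\leq C(K)\|Y\|_0\leq 8c(\Sigma)C(K)\Gamma$, so for $\eps\leq\eps_*=c(K)/\sqrt\Gamma$ with $c(K)$ small one gets $\|\eps^2N\|\leq\frac12$, whence $M$ is invertible with $\|M^{-1}\|\leq2$. The map $(q,Y)\mapsto M$ is $C^\infty$ (smooth in $q$, linear in $Y$) with differential of size $O(\eps^2)$; matrix inversion is $C^\infty$ on $GL(4n,\R)$ with $\d(A\mapsto A^{-1})H=-A^{-1}HA^{-1}$; composing, $(\eps,q,Y)\mapsto M^{-1}$ is $C^1$ with $\|\d M^{-1}\|\lesssim\|M^{-1}\|^2\|\d M\|\lesssim C(K,\Gamma)$ uniformly in $\eps$. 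Next, $-G(q,q_t,q_t)+\eps h(\eps,q,\eps^{-1}q_t,Y,Y_t)$ is, term by term (using (\ref{hdef})), a product of a $C^\infty$ bounded function of $q$ with a monomial in $Y,Y_x,Y_y,Y_t,\dot q$, possibly composed with $\ip{\cdot,\psi_\nu}$ or $\ip{\cdot,\psi_{\nu\lambda}}$; each such monomial is bounded multilinear on $X_\Gamma$ by the inputs above, hence $C^\infty$, and the key point for $\eps$-uniformity is that every factor $\eps^{-1}q_t=\dot q$ is paired in the $X$-norm with the weight $\eps^{-1}$, while the leftover powers of $\eps$ are nonnegative and bounded above by powers of $\eps_*$. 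Therefore $f=M^{-1}\cdot(\,\cdots\,)$ is a product of two $C^1$, $X_\Gamma$-bounded maps, and the claim for $f$ follows by the product rule.

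For $g$ (see (\ref{Gdef})) the argument is the same once $f$ is in hand. The operator $B_\psi$ of (\ref{Bdef}) is linear in $Y$ with $C^\infty$ $q$-dependent coefficients and involves only $Y,Y_x,Y_y$ and $\Delta\psi\in H^2$, so $B_\psi:X_\Gamma\ra H^2$ is bounded and $C^\infty$; $\psi_\mu f^\mu$ is the product of the bounded $C^\infty$ map $q\mapsto\psi_\mu\in H^2$ with the $C^1$ map $f$; $\psi_{\mu\nu}\,\eps^{-2}q^\mu_tq^\nu_t=\psi_{\mu\nu}\dot q^\mu\dot q^\nu$ is a $C^\infty$ function of $q$ times a quadratic in $\dot q$, bounded and $C^\infty$ on $X_\Gamma$ with the $\eps^{-1}$ weights absorbing the $\eps^{-1}q_t$'s exactly; and $\eps j'$ is, as remarked after (\ref{jhatdef}), a polynomial in $Y,Y_x,Y_y,Y_t$ (no higher derivatives) with $\psi$- and $\psi_\tau=\psi_\mu\dot q^\mu$-dependent coefficients, handled exactly as the $h$-terms. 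Adding up, $g:X_\Gamma\ra H^2$ is $C^1$ with differential bounded on $X_\Gamma$ uniformly in $\eps$, which is what Picard's method in the proof of Theorem \ref{locex} requires.

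The proof contains no conceptual difficulty: every term is a $C^\infty$ function of the finite-dimensional variable $q\in K$ multiplied by a bounded multilinear expression in $Y\in H^3$, $Y_x,Y_y,Y_t\in H^2$ and $\dot q\in\R^{4n}$, and the algebra property together with the Sobolev inclusions guarantees that such expressions define bounded, smooth maps into $H^2$ (respectively $\R^{4n}$). The only real labour, and the reason the details are relegated to the appendix, is bookkeeping: there are a great many terms in $h$ and in $j'=j+\wh{j}$, and one must verify uniformity in $\eps$ with some care, systematically using the scaled norm $\|(q,p,Y,Z)\|_X=\max\{|q|,\eps^{-1}|p|,\|Y\|_3,\|Z\|_2\}$ and the restriction $\eps\leq c(K)/\sqrt\Gamma$ to ensure that no negative power of $\eps$ ever survives in an estimate. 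This is the only genuine obstacle, and it is a notational rather than a mathematical one.
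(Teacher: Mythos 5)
Your proposal is correct and follows essentially the same route as the paper's appendix: decompose $f$ and $g$ into elementary building blocks, use the algebra property of $\hh^2$, compactness of $K$, and uniform invertibility of $M$ to establish $C^1$-smoothness with $\eps$-uniform bounds via the scaled $X$-norm. The only point you gloss over, which the paper isolates as Lemma \ref{fsmooth}, is that $q\mapsto\psi(q,\cdot)$ is genuinely differentiable as a map $U\to H^k$ (not merely bounded with bounded $q$-derivatives), but this is a routine mean-value-theorem argument and not a substantive gap.
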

Note that, for Banach spaces $B,C$, ${\cal L}(B,C)$ denotes the space of bounded linear maps $B\ra C$, which is itself a Banach space
with respect to the norm $\|S\|_{{\cal L}(B,C)}=\sup\{\|S(x)\|_C/\|x\|_B\: :\: x\in B, x\neq 0\}$. 
\begin{corollary}\label{fglip}
The functions  $f:X_\Gamma\ra\R^{4n}$ and $g:X_\Gamma\ra H^2$ are Lipschitz and bounded, uniformly in $\eps$.  That is, there
exist constants $\Lambda_f,\Lambda_g,C_f,C_g$, depending only on $K$ and $\Gamma$, such that for all $x,x'\in X_\Gamma$,
$$
|f(x)-f(x')|\leq\Lambda_f\|x-x'\|_X,\quad
\|g(x)-g(x')\|_2\leq\Lambda_g\|x-x'\|_X,\quad
|f(x)|\leq C_f,\quad
\|g(x)\|_2\leq C_g
$$
\end{corollary}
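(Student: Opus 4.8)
This is essentially immediate from Proposition \ref{fgprops}, via the mean value inequality in Banach spaces: a $C^1$ map with uniformly bounded differential on a convex set is Lipschitz (with the same constant), and a Lipschitz function on a bounded set is bounded. The two points to watch are that the mean value inequality needs a \emph{convex} domain, and that all constants must come out independent of $\eps$. To arrange convexity, I would fix $K$ to be a closed ball $\ol{B}(q_0,\rho)\subset U$ (the compact neighbourhood $K$ was at our disposal all along). Writing $r=8c(\Sigma)\Gamma$, the condition $\|(0,p,Y,Z)\|_X\le r$ is equivalent to the three conditions $|p|\le\eps r$, $\|Y\|_3\le r$, $\|Z\|_2\le r$, so
\[
X_\Gamma=K\times\ol{B}_{\R^{4n}}(0,\eps r)\times\ol{B}_{H^3}(0,r)\times\ol{B}_{H^2}(0,r)
\]
is a product of convex sets, hence convex.

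For the Lipschitz bounds, given $x,x'\in X_\Gamma$ the segment $x_t=x'+t(x-x')$, $t\in[0,1]$, stays in $X_\Gamma$ by convexity, and $t\mapsto f(x_t)$ is $C^1$ with derivative $\d f_{x_t}(x-x')$ by Proposition \ref{fgprops}. Hence
\[
f(x)-f(x')=\int_0^1\d f_{x_t}(x-x')\,dt,\qquad |f(x)-f(x')|\le\int_0^1\Lambda_f\|x-x'\|_X\,dt=\Lambda_f\|x-x'\|_X,
\]
using the uniform bound $|\d f_{x_t}\omega|\le\Lambda_f\|\omega\|_X$; the identical argument for $g$ gives $\|g(x)-g(x')\|_2\le\Lambda_g\|x-x'\|_X$.

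For boundedness I would take the base point $x_0=(q_0,0,0,0)\in X_\Gamma$ and note $f(x_0)=g(x_0)=0$: inspecting (\ref{suspow}), (\ref{lato}), (\ref{Gdef}) with $Y=0$ and $q_t=0$, the matrix $M(\eps,q_0,0)$ is the identity and $G(q_0,0,0)=0$, while every term of $h$ in (\ref{hdef}) and of $j'=j+\wh{j}$ in (\ref{jdef}), (\ref{jhatdef}) carries a factor of $Y$, $Y_t$, $Y_x$, $Y_y$ or $\psi_\tau=\psi_\mu\dot q^\mu$, all vanishing at $x_0$. Since $\|x-x_0\|_X=\max\{|q-q_0|,\eps^{-1}|p|,\|Y\|_3,\|Z\|_2\}\le\max\{\rho,r\}=:R$ for every $x\in X_\Gamma$, the Lipschitz estimates give $|f(x)|\le\Lambda_f R=:C_f$ and $\|g(x)\|_2\le\Lambda_g R=:C_g$, with $\Lambda_f,\Lambda_g,C_f,C_g$ depending only on $K$, $\Gamma$ (and the fixed $\Sigma$). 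There is no genuine obstacle here; the only things not to overlook are the need for convexity of $X_\Gamma$ — forced by taking $K$ a ball — and for the base-point values of $f,g$ to be $\eps$-uniform, which holds because they are in fact zero.
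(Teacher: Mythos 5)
Your proof is correct and follows essentially the same route as the paper: convexity of $X_\Gamma$ together with the mean value inequality (integrating $\d f$, $\d g$ along a segment) gives the Lipschitz bounds, and boundedness is then deduced from the vanishing of $f,g$ at a point with $p=Y=Z=0$. The only cosmetic difference is that the paper anchors the boundedness estimate at the $q$-dependent base point $(q,0,0,0)$, giving $|f(x)|\le\Lambda_f\|(0,p,Y,Z)\|_X\le 8c(\Sigma)\Gamma\Lambda_f$, while you use the fixed base point $(q_0,0,0,0)$; both work, and the convexity of $K$ you insist on is indeed assumed (see the appendix, where $K$ is taken to be a compact convex neighbourhood).
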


\begin{proof} Let $x_1,x_2\in X_\Gamma$. Since $X_\Gamma$ is convex, the curve $x(t)=x_1+t(x_2-x_1)$, $0\leq t\leq 1$ remains in $X_\Gamma$. Hence
\bea
|f(x_1)-f(x_2)|&=&\left|\int_0^1 \d f_{x(t)}(x_2-x_1)\, dt\right|
\leq\int_0^1\Lambda_f\|x_2-x_1\|_X\, dt=\Lambda_f\|x_1-x_2\|_X.
\eea
From the definition of $f$ one sees that $f(q,0,0,0)=0$ for all $q\in K$. Hence, for all $x=(q,p,Y,Z)\in X_\Gamma$
\bea
|f(x)|&=&|f(q,p,Y,Z)-f(q,0,0,0)|\leq \Lambda_f\|(0,p,Y,Z)\|_X\leq 8c(\Sigma)\Gamma\Lambda_f.
\eea
The proof for $g$ follows mutatis mutandis.
\end{proof}

To establish uniqueness of the solution, and  to show that $q$ is three times continuously differentiable, we will
need the following extension property
of $\d f$ and $\d g$, whose proof is also deferred to the appendix:
\begin{proposition}\label{fgdiffprop}
The differentials $\d f:X_\Gamma\ra
{\cal L}(X,\R^{4n})$ and
$\d g: X_\Gamma\ra{\cal L}(X,H^2)$ of $f$ and $g$ 
extend continuously to maps $\d f^{ext}:X_\Gamma\ra{\cal L}(\R^{4n}\times \R^{4n}\times H^1\times
L^2,\R^{4n})$ and $\d g^{ext}:X_\Gamma\ra{\cal L}(\R^{4n}\times\R^{4n}\times H^1\times L^2,L^2)$,
bounded by $\Lambda_f$ and $\Lambda_g$ respectively.
\end{proposition}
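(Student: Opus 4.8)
The plan is to exploit the explicit form of $f$ and $g$ recorded in (\ref{lato}), (\ref{Gdef}), (\ref{Bdef}), together with (\ref{hdef}), (\ref{jdef}), (\ref{jhatdef}) and (\ref{suspow}). Both $f$ and $g$ are polynomial expressions in $q_t,Y,Y_x,Y_y$ and $Y_t$, whose coefficients are smooth functions of $q$ assembled from $\psi$, its $q$-derivatives, $\Sigma$-integrals of such, and the matrix $M^{-1}(\eps,q,Y)$. The structural feature already stressed in Section \ref{sec:proj} is that \emph{no spatial derivative of $Y$ beyond first order, and no derivative of $Y_t$ at all, ever appears}; in $g$ the only derivative-of-$Y$ term is $-B_\psi Y$, and $B_\psi$ is a first order operator. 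Since $f,g$ are $C^1$ by Proposition \ref{fgprops}, their differentials are computed by the product and chain rules, and one reads off that in the resulting linear expressions for $\d f_x\omega$ and $\d g_x\omega$ the direction $\omega=(\delta q,\delta p,\delta Y,\delta Z)$ enters \emph{only} through $\delta q$, through $\delta p$ (always carrying a factor $\eps^{-1}$, since $\dot q=\eps^{-1}q_t$ occurs as an argument), through $\delta Y$ and its first derivatives $\delta Y_x,\delta Y_y$, and through $\delta Z$ undifferentiated. The one coefficient needing a moment's thought is $M^{-1}$: by $\d(M^{-1})=-M^{-1}(\d M)M^{-1}$ and (\ref{suspow}), its derivative in the $\delta Y$ direction contributes only $-\eps^2\gamma^{\mu\lambda}\ip{\delta Y,\psi_{\lambda\nu}}$, which is of \emph{zeroth} order in $\delta Y$.

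The estimate is then routine. Fix $x=(q,p,Y,Z)\in X_\Gamma$. Compactness of $K$ and smoothness in $q$ make $\psi$, all its $q$-derivatives, $\gamma^{\mu\nu}$, $G^\mu_{\nu\lambda}$ and their $q$-derivatives uniformly bounded in $C^0(\Sigma)$ (respectively bounded); $M^{-1}$ and its $q$- and $Y$-derivatives are bounded, as noted above (\ref{qf}); $|\dot q|=\eps^{-1}|p|\leq 8c(\Sigma)\Gamma$; and the Sobolev inequalities of Proposition \ref{sobolev} give $\|Y\|_{C^1}\leq\alpha\|Y\|_3\leq 8\alpha c(\Sigma)\Gamma$ and $\|Y_t\|_{C^0}\leq\alpha\|Y_t\|_2\leq 8\alpha c(\Sigma)\Gamma$. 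Hence each term of $\d f_x\omega$ (resp.\ of $\d g_x\omega$) is a product of uniformly bounded quantities with exactly one factor drawn from $\delta q,\ \eps^{-1}\delta p,\ \delta Y,\ \delta Y_x,\ \delta Y_y,\ \delta Z$; the products of functions are controlled pointwise, hence in $\|\cdot\|_0$, while $\|B_\psi\delta Y\|_0\leq C(K)\|\delta Y\|_1$ because $B_\psi$ is first order. Summing the finitely many terms gives
\[
|\d f_x\omega|\leq\Lambda_f\|\omega\|_{ext},\qquad \|\d g_x\omega\|_0\leq\Lambda_g\|\omega\|_{ext},
\]
where $\|\omega\|_{ext}:=\max\{|\delta q|,\eps^{-1}|\delta p|,\|\delta Y\|_1,\|\delta Z\|_0\}$, uniformly for $\eps\in[0,\eps_*(K,\Gamma)]$ since all $\eps$-dependence is polynomial and $\eps_*$ is finite; in the $g$-bound the term $-\psi_\mu f^\mu$ of (\ref{Gdef}) is handled using the $f$-bound just obtained. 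This is in fact nothing but the estimate already carried out in proving Proposition \ref{fgprops}, only read with attention to which norm of $\omega$ actually enters, so the same constants $\Lambda_f,\Lambda_g$ may be retained.

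It remains to extend and to check continuity. As $H^3$ is dense in $H^1$ and $H^2$ is dense in $L^2$, the space $X$, given the norm $\|\cdot\|_{ext}$, is dense in $X^{ext}:=\R^{4n}\times\R^{4n}\times H^1\times L^2$. The bounds above make $\d f_x:(X,\|\cdot\|_{ext})\ra\R^{4n}$ and $\d g_x:(X,\|\cdot\|_{ext})\ra L^2$ (the latter via $\d g_x:X\ra H^2\subset L^2$) bounded linear maps, so each extends uniquely to a bounded linear map $\d f^{ext}_x:X^{ext}\ra\R^{4n}$, $\d g^{ext}_x:X^{ext}\ra L^2$, with norms at most $\Lambda_f$ and $\Lambda_g$. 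Continuity of $x\mapsto\d f^{ext}_x$ and $x\mapsto\d g^{ext}_x$ on $X_\Gamma$ then follows by applying the same term-by-term bound to the differences $\d f_x-\d f_{x'}$, $\d g_x-\d g_{x'}$: each term is a product of uniformly bounded factors with one factor of $\omega$ and one factor that is a difference of base-point quantities, and every such difference tends to $0$ in $C^0(\Sigma)$ (respectively in $\R^{4n}$) as $x'\ra x$ in $X_\Gamma$, by Proposition \ref{sobolev} and the smoothness in $q$.

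I expect the only genuine work to be bookkeeping: one must run through each of the (many) terms making up $M^{-1}$, $G$, $h$, $j$, $\wh j$ and $B_\psi$ in (\ref{suspow}), (\ref{lato}), (\ref{hdef}), (\ref{jdef}), (\ref{jhatdef}), (\ref{Bdef}) and confirm that one differentiation preserves the property ``at most first spatial derivatives of $Y$, no derivatives of $Y_t$''. There is no analytic obstacle beyond this: differentiation never raises the order of a $Y$-derivative, so --- in contrast with $f$ and $g$ themselves, whose definition and $C^1$ regularity require $Y\in H^3$, $Y_t\in H^2$ (through the algebra property, Proposition \ref{algebra}, and the Sobolev embeddings of Proposition \ref{sobolev}) --- the \emph{directional} derivative only ever sees the $H^1\times L^2$ part of the direction.
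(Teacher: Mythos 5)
Your proposal is correct and follows essentially the same route as the paper's proof, which likewise argues by explicit termwise inspection (illustrating with the last term of $h$) that each term of $\d f_x\omega$ and $\d g_x\omega$ involves at most one spatial derivative of $\delta Y$ and no derivatives of $\delta Z$, so the differential is bounded with respect to the weaker $\R^{4n}\times\R^{4n}\times H^1\times L^2$ norm and hence extends by density. Your write-up additionally spells out the $\eps$-bookkeeping via the linear isometry $\iota$, the derivative of $M^{-1}$, and the continuity of $x\mapsto\d f^{ext}_x$, $x\mapsto\d g^{ext}_x$, details the paper leaves implicit as ``clear''.
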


The rest of this section is devoted to the proof of Theorem \ref{locex}. Let $T>0$ be chosen such that
\bea
&&T\leq \log 2,\quad
T\leq\frac1\eps,\quad
T\leq \frac{d}{\eps(2\Gamma+C_f(K,\Gamma))},\quad
T\leq \frac{\Gamma}{\eps C_f(K,\Gamma)},\nonumber\\
&&\sqrt{T}\leq \frac{\Gamma}{C_g(K,\Gamma)},\quad
T\leq\frac{1}{4\eps\Lambda_f(K,\Gamma)},\quad
\sqrt{T}\leq\frac{1}{8c(\Sigma)\Lambda_g}.
\eea
Given a complete subset $B$ of a Banach space with norm $\|\cdot\|_B$, denote by $C_TB$ the space of continuous
maps $[0,T]\ra B$ equipped with the sup norm $\n{b}=\sup\{\|b(t)\|_B\: :\: t\in[0,T]\}$. $(C_TB,\n{\cdot})$
is itself a complete subset of a Banach space. 

\subsection{Definition of the iteration scheme}

We will produce a sequence $\omega^i\in C_TX_\Gamma$ converging
to a solution of the initial value problem for the coupled system. Choose and fix
$\delta\in(0,\Gamma/4)$, and let $Y_0^i,Y_1^i\in C^\infty(\Sigma,\R^3)$ be sequences such that
\beq
\|Y_0^i-Y_0\|_3<\frac{\delta}{2^i},\qquad
\|Y_1^i-Y_1\|_2<\frac{\delta}{2^i}.
\eeq
Such sequences exist since $C^\infty$ is dense in $H^k$ for all $k\geq0$. Let 
$\omega^0=(q_0,\eps q_1,Y_0^0,Y_1^0)$, which is constant in $t$ and smooth on $\Sigma$, and trivially lies in 
$C_TX_\Gamma$. Given $\omega^i$, we define the next iterate to be the solution of the initial
value problem $q^{i+1}(0)=q_0,q_t^{i+1}(0)=\eps q_1,Y^{i+1}(0)=Y_0^{i+1},Y_t^{i+1}(0)=Y_1^{i+1}$ for
\bea
q_{tt}^{i+1}&=&\eps^2 f(\omega^i)\\
Y_{tt}^{i+1}-\Delta Y^{i+1}&=& g(\omega^i).
\eea
We must first check that the sequence $\omega^i$ is well defined. So, assume that $\omega^i$ is smooth and
lies in $C_TX_\Gamma$. Then 
\beq\label{qformula}
q^{i+1}(t)=q_0+\eps t q_1+\eps^2\int_0^t\left(\int_0^s f(\omega^i(r))dr\right)ds,
\eeq
which exists since $f\circ\omega^i$ is continuous. Now
\beq
|q^{i+1}(t)-q_0|\leq \eps T|q_1|+\frac12\eps^2T^2C_f(K,\Gamma)\leq  \eps T\left(\Gamma+\frac{C_f(K,\Gamma)}{2}\right)
\leq \frac{d}{2}
\eeq
by our choice of $T$, so $q^{i+1}(t)$ remains in $K$. Further,
\beq
\eps^{-1}|q^{i+1}_t(t)|\leq|q_1|+\eps\int_0^t|f(\omega^i(s))|ds\leq\Gamma+T\eps C_f(K,\Gamma)\leq 2\Gamma
<8c(\Sigma)\Gamma
\eeq
by our choice of $T$. Turning to $Y^{i+1}$, we see by inspection that if $Y^i,q^i$ are smooth, then
$g(\omega^i)$ is smooth, so the solution $Y^{i+1}(t)$ exists, is unique and smooth, by Theorem \ref{waveest}, which
also yields the energy estimate
\bea
\max\{\|Y_t^{i+1}(t)\|_2,\|Y^{i+1}(t)\|_3\}&\leq& C(\Sigma)e^t\{\|Y_0^{i+1}\|_3+\|Y_1^{i+1}\|_2+
\sqrt{t}C_g(K,\Gamma)\}\nonumber\\
&\leq& 
2C(\Sigma)\{2(\Gamma+\delta)+\sqrt{T}C_g(K,\Gamma)\}<8C(\Sigma)\Gamma
\eea
by our choice of $T$ and $\delta$. Hence, if $\omega^i$ is smooth and in $C_TX_\Gamma$, so is $\omega^{i+1}$.
We have already observed that $\omega^0$ is smooth and in $C_TX_\Gamma$, so, by induction, the sequence
$\omega^i\in C_TX_\Gamma$ is well-defined.

\subsection{Convergence of the iteration scheme}

We will now show that $\omega^i$ is Cauchy, and hence converges in $C_TX_\Gamma$. From (\ref{qformula}) one has
\bea
|q^{i+1}(t)-q^i(t)|&=&\eps^2\left|\int_0^t\int_0^s(f(\omega^i(r))-f(\omega^{i-1}(r)))dr\, ds\right|\nonumber\\
&\leq&\frac{\eps^2}{2}T^2\Lambda_f(K,\Gamma)\n{\omega^i-\omega^{i-1}}
\leq \frac18\n{\omega^i-\omega^{i-1}}
\eea
by our choice of $T$. Similarly
\bea
\eps^{-1}|q^{i+1}_t(t)-q^i_t(t)|&=&\eps\left|\int_0^t(f(\omega^i(r))-f(\omega^{i-1}(r)))dr\right|\nonumber\\
&\leq& \eps T\Lambda_f(K,\Gamma)\n{\omega^i-\omega^{i-1}}
\leq \frac14\n{\omega^i-\omega^{i-1}}.
\eea
Now $Z=Y^{i+1}-Y^i$ satisfies the wave equation with source $g(\omega^i)-g(\omega^{i-1})$ and small smooth initial
data $\|Z(0)\|_3,\|Z_t(0)\|_2\leq \delta/2^{i-1}$. Hence, by Theorem \ref{waveest}, for each $t\in[0,T]$,
\bea
\max\{\|Z(t)\|_3,\|Z_t(t)\|_2\}&\leq&
c(\Sigma)e^t\left\{\frac{\delta}{2^{i-2}}+\left(\int_0^t\|g(\omega^i(s))-g(\omega^{i-1}(s))\|_2^2ds\right)^\frac12
\right\}\nonumber\\
&\leq& 2c(\Sigma)\left\{\frac{\delta}{2^{i-2}}+\sqrt{T}\Lambda_g(K,\Gamma)\n{\omega^i-\omega^{i-1}}\right\}
\nonumber\\
&\leq& \frac{c(\Sigma)\delta}{2^{i-3}}+\frac14\n{\omega^i-\omega^{i-1}}.
\eea
Assembling these inequalities, one sees that
\beq
\n{\omega^{i+1}-\omega^i}\leq
\frac14\n{\omega^i-\omega^{i-1}}+\frac{\alpha}{2^i}
\eeq
where $\alpha=c(\Sigma)\delta/8$. It follows that
\beq
\n{\omega^{i+1}-\omega^i}\leq\frac{1}{4^i}\n{\omega^1-\omega^0}+\frac{\alpha}{2^{i-1}},
\eeq
and hence, for all $k\geq 1$,
\bea
\n{\omega^{i+k}-\omega^i}&\leq&\sum_{j=1}^k\n{\omega^{i+j}-\omega^{i+j-1}}
\leq \frac{1}{4^i}\n{\omega^1-\omega^0}\sum_{j=1}^\infty\frac{1}{4^j}+\frac{\alpha}{2^i}\sum_{j=0}^\infty\frac{1}{2^j}.
\eea
Hence $\omega^i$ is Cauchy with respect to $\n{\cdot}$,  so $\omega^i\ra\omega=(q,p,Y,Z)\in C_TX_\Gamma$.

\subsection{The limit solves the initial value problem}

We have established that 
\bea
q^i&\ra& q\quad\mbox{in}\quad C_TK,\\
q^i_t&\ra& p\quad\mbox{in}\quad C_T\R^{4n},\\
Y^i&\ra& Y\quad\mbox{in}\quad C_TH_3,\\
Y_t^i&\ra& Z\quad\mbox{in}\quad C_TH^2.
\eea
Now, for all $i$,
\bea
\|\omega(0)-(q_0,\eps q_1,Y_0,Y_1)\|_X&\leq&
\|\omega(0)-\omega^i(0)\|_X+\|\omega^i(0)-(q_0,\eps q_1,Y_0,Y_1)\|_X\nonumber\\
&\leq&
\n{\omega-\omega^i}+\frac{\delta}{2^i}\ra 0
\eea
as $i\ra\infty$. Hence $\omega(0)=(q_0,\eps q_1,Y_0,Y_1)$, that is, the limit has the correct initial data.

We will now show that the limit solves the coupled system and has the differentiability
properties claimed. Let $\wt{Y}(t)=Y_0+\int_0^t Z(s)ds$. Note that $\wt{Y}$ is manifestly in $C^1([0,T],H^2)$,
with derivative $\wt{Y}_t=Z$. Now
\bea
\n{Y^i-\wt{Y}}_{C_TH^2}&=&\n{Y^i(0)-Y_0+\int_0^t(Y^i_t(s)-Z(s))ds}_{C_TH^2}\nonumber\\
&\leq& \frac{\delta}{2^i}+T\n{Y^i_t-Z}_{C_TH^2}\ra 0
\eea
as $i\ra\infty$. Hence, $Y^i\ra \wt{Y}$ in $C_TH^2$. But $Y^i\ra Y$ in $C_TH^3$, hence also in $C_TH^2$, so
$Y=\wt{Y}$. Hence, $Y\in C^1([0,T],H^2)$ and $Y_t=Z$.

Consider $Y^{i+k}_{tt}-Y^i_{tt}$. This is smooth, and satisfies the wave equation with source
$g(\omega^{i+k-1})-g(\omega^{i-1})$. Hence
\bea
\|Y^{i+k}_{tt}-Y^i_{tt}\|_1&=&\|\Delta(Y^{i+k}-Y^i)+g(\omega^{i+k-1})-g(\omega^{i-1})\|_1\nonumber\\
&\leq&
2\|Y^{i+k}-Y^i\|_3+\|g(\omega^{i+k-1})-g(\omega^{i-1})\|_2\nonumber\\
&\leq&
2\|Y^{i+k}-Y^i\|_3+\Lambda_g(K,\Gamma)\|\omega^{i+k-1}-\omega^{i-1})\|_X\nonumber\\
\Rightarrow\quad
\n{Y^{i+k}_{tt}-Y^i_{tt}}_{C_TH^1}&\leq&[2+\Lambda_g(K,\Gamma)]\n{\omega^{i+k-1}-\omega^{i-1}}_{C_TX_\Gamma}.
\eea
Since $\omega^i$ is Cauchy in $C_TX_\Gamma$, it follows that $Y^i_{tt}$ is Cauchy in $C_TH^1$. Hence
$Y^i_{tt}\ra W$ in $C_TH^1$. Let $\wt{Z}(t)=Y_1+\int_0^tW$. Note that $\wt{Z}$ is manifestly in
$C^1([0,T],H^1)$ and $\wt{Z}_t=W$. Now
\bea
\n{Y_t^i-\wt{Z}}_{C_TH^1}&\leq&\n{Y_1^i-Y_1}_{C_TH^1}+\n{\int_0^t(Y^i_{tt}-W)}_{C_TH^1}\nonumber\\
&\leq&\frac{\delta}{2^i}+T\n{Y^i_{tt}-W}_{C_TH^1}\ra 0
\eea
as $i\ra\infty$. Hence $Y_t^i\ra\wt{Z}$ in $C_TH^1$. But $Y_t^i\ra Z$ in $C_TH^2$, hence also in $C_TH^1$,
so $Z=\wt{Z}$. But $Y_t=Z$. Hence, $Y\in C^2([0,T],H^1)$ and $Y_{tt}=W$.

By similar reasoning, $q_{tt}^i\ra m$ in $C_T\R^{4n}$ and $q\in C^2([0,T],\R^{4n})$ with $q_t=p$ and $q_{tt}=m$.

We can now show that $\omega$ solves the coupled system:
\bea
\n{Y_{tt}-\Delta Y-g(\omega)}_{C_TH^1}&\leq&\n{Y_{tt}-Y^i_{tt}}_{C_TH^1}+\n{\Delta(Y-Y^i)}_{C_TH^1}+
\n{g(\omega)-g(\omega^{i-1})}_{C_TH^1}\nonumber\\
&\leq&\n{Y_{tt}-Y^i_{tt}}_{C_TH^1}+2\n{Y-Y^i}_{C_TH^3}+\n{g(\omega)-g(\omega^{i-1})}_{C_TH^2}.
\nonumber \\
&&
\eea
Now $Y^i_{tt}\ra Y_{tt}$ in $C_TH^1$, $Y^i\ra Y$ in $C_TH^3$, $g:X_\Gamma\ra H^2$ is
continuous, and $\omega^i\ra\omega$ in $C_TX_\Gamma$, so $g(\omega^i)\ra g(\omega)$ in $C_TH^2$. Hence
\beq
\n{Y_{tt}-\Delta Y-g(\omega)}_{C_TH^1}=0.
\eeq
Similarly $\|q_{tt}-\eps^2f(\omega)\|_{C_T\R^{4n}}=0$, that is, $q_{tt}=\eps^2 f(\omega)$.

It remains to establish the higher differentiability of $q$. Differentiating the equation for $q^{i+1}_{tt}$
gives
\beq
q^{i+1}_{ttt}=\eps^2 df^{ext}_{\omega^i}\omega^i_t.
\eeq
Now $\omega^i\ra\omega$ in $C_TX_\Gamma$, $\omega^i_t\ra\omega_t$ in $C_T(\R^{4n}\times\R^{4n}\times H^2\times H^1)$, and $df^{ext}$ is continuous, so $q^{i+1}_{ttt}\ra \ell$, say, in $C_T\R^{4n}$. Let
$\wt{m}(t)=\eps^2f(q_0,\eps q_1,Y_0,Y_1)+\int_0^t\ell=q_{tt}(0)+\int_0^t\ell$. Note that 
$\wt{m}\in C^1([0,T],\R^{4n})$ and $\wt{m}_t=\ell$. Then
\bea
\|q^i_{tt}-\wt{m}\|_{C_T\R^{4n}}&=&\|\int_0^t(q^i_{ttt}-\ell)\|_{C_T\R^{4n}}\leq T\|q^i_{ttt}-\ell\|_{C_T\R^{4n}}
\ra 0
\eea
so $q^i_{tt}\ra\wt{m}$ in $C_T\R^{4n}$. But $q^i_{tt}\ra q_{tt}$ in $C_T\R^{4n}$, so $q_{tt}=\wt{m}$. Hence,
$q_{tt}\in C^1([0,T],\R^{4n})$, as claimed.

\subsection{Uniqueness of the solution}

Assume that $(\wt{q},\wt{Y})$ is another solution of (\ref{qf}), (\ref{Yg}) with the same initial data and regularity as
$(q,Y)$, and let $(p,Z)=(q-\wt{q},Y-\wt{Y})$.
Then $(p,Z)$ satisfies the system
\beq
Z_{tt}-\Delta Z=\Xi(t),\qquad p_{tt}=\eps^2\Upsilon(t)
\eeq
with initial data $Z(0)=Z_t(0)=0$, $p(0)=p_t(0)=0$, where
\beq
\Xi(t)=g(\omega)-g(\wt\omega),\quad
\Upsilon(t)=f(\omega)-f(\wt\omega)
\eeq
and $\omega=(q,q_t,Y,Y_t)$, $\wt\omega=(\wt{q},\wt{q}_t,\wt{Y},\wt{Y}_t)$. Define
\beq 
E(t)=\|Z(t)\|_1^2+\|Z_t(t)\|_0^2+|p|^2+\frac{1}{\eps^2}|p_t|^2,
\eeq
which, by the regularity properties of $(p,Z)$, is continuously differentiable, and has $E(0)=0$.
Reprising the argument in (\ref{carkir}), which requires only that $Z\in H^2$ and $Z_t\in H^1$, one sees that
\beq
E'(t)\leq 2E(t)+\|\Xi(t)\|_0^2+|\Upsilon(t)|^2.
\eeq
Now, arguing as in the proof of Corollary \ref{fglip}, with $\omega(s)=\omega+s(\wt{\omega}-\omega)$,
\beq
\|\Xi(t)\|_0=\|g(\omega)-g(\wt{\omega})|_0
=\left\|\int_0^1 \d g_{\omega(s)}(\wt\omega-\omega)\, ds\right\|_0
\leq \Lambda_g\max\{|p|,\eps^{-1}|p_t|,\|Z\|_1,\|Z_t\|_0\}
\eeq
by Proposition \ref{fgdiffprop}. Similarly $|\Upsilon(t)|\leq\Lambda_f\max\{|p|,\eps^{-1}|p_t|,\|Z\|_1,\|Z_t\|_0\}$. Hence
\beq
E'(t)\leq \kappa E(t)
\eeq
where $\kappa=2+\Lambda_f+\Lambda_g$, whence it follows that
\beq
\frac{d\:}{dt} e^{-\kappa t}E(t)\leq 0.
\eeq
So $e^{-\kappa t}E(t)$ is a nonincreasing, non-negative function which is zero at $t=0$. Hence $E(t)=0$ for all $t$, and we conclude
that $(p,Z)=(0,0)$ for all $t$, that is, $(q,Y)=(\wt{q},\wt{Y})$.

\subsection{Preservation of constraints}

Given the solution $(q,Y)$ produced above, define for each $\mu\in\{1,2,\ldots,4n\}$
\beq
a_\mu(t)=\ip{Y,\frac{\cd\psi}{\cd q^\mu}}.
\eeq
The $L^2$ orthogonality constraint is that $a_\mu(t)=0$ for all $\mu,t$.
By construction, the coupled system implies that $\ddot{a}_\mu=0$. If the initial data are tangent to the 
constraint then $a_\mu(0)=\dot{a}_\mu(0)$, and hence $a_\mu(t)=0$ for all $t\in[0,T]$.

Similarly, given the solution $(q,Y)$ produced above, define $\chi:[0,T]\times\Sigma\ra\R$ by
\beq
\chi=Y\cdot\psi(q)+\frac12\eps^2|Y|^2.
\eeq
The pointwise constraint is that
$\chi=0$ everywhere on $\Sigma$. Note that $\chi(t)\in \hh^3$ for all $t\in[0,T]$ so $\chi(t):\Sigma\ra\R$ is continuous.  
Assume that $\chi(0)=0$ and $\chi_t(0)=0$, that is the initial data
are tangent to the constraint. A straightforward, if lengthy, calculation using the coupled system 
and the harmonic map equation for $\psi$ shows that
$\chi$ satsifies the linear PDE
\bea
\chi_{tt}-\Delta\chi&=&2\eps^2\bigg\{[2\psi_x\cdot Y_x+2\psi_y\cdot Y_y-|\psi_\tau|^2+Y\cdot\Delta\psi
-2\eps\psi_\tau\cdot Y_t\nonumber\\
&&-\eps^4(|Y_t|^2-|Y_x|^2-|Y_y|^2)]\chi+(Y\cdot\psi_x)\chi_x+(Y\cdot\psi_y)\chi_y\bigg\}\nonumber\\
&=:&a\chi+b_1\chi_x+b_2\chi_y
\eea
where $a(t)\in H^2$,  $b_1(t),b_2(t)\in H^3$ for all $t$. Let $E(t)=\|\chi(t)\|_1^2+\|\chi_t(t)\|_0^2$.
Then
\bea
E'(t)&=&2\ip{\chi_t,\Delta\chi+a\chi+b_1\chi_x+b_2\chi_y}_0+2\ip{\chi_t,\chi}_1
=2\ip{\chi_t,\chi+a\chi+b_1\chi_x+b_2\chi_y}_0\nonumber\\
&\leq& \|\chi_t\|_0^2+\|\chi+a\chi+b_1\chi_x+b_2\chi_y\|_0^2
\leq \|\chi_t\|_0^2+\kappa\|\chi\|_1^2
\leq\kappa E(t)
\eea
where 
\beq
\kappa=\max_{0\leq t\leq T}8(1+\|a(t)\|_2^2+\|b_1(t)\|_2^2+\|b_2(t)\|_2^2).
\eeq
 Hence
$e^{-\kappa t}E(t)$ is a nonincreasing, non-negative function which is zero at $t=0$, so $E(t)=0$, whence  
$\|\chi(t)\|_1=0$ for all $t$. Since we already know that $\chi(t):\Sigma\ra\R$ is continuous,
it follows that $\chi=0$ everywhere. This completes the proof of Theorem \ref{locex}.

\section{Near coercivity of the improved Hessian}\news
\label{sec:coerce}

By repeatedly applying the local existence theorem, we can extend the solution of the coupled system whilever
$q$ remains in $K$ and $\eps^{-1}q_t$, $\|Y\|_3$ and $\|Y_t\|_2$ remain bounded. So to prove
long time existence, we must, among other things,
bound the growth of $\|Y\|_3$. The first step is to show that $\|Y\|_3$ is
controlled by the quadratic form $\ip{LY,LLY}$ or, more precisely, by the quadratic form $Q_2:H^3\ra\R$ defined
next.

\begin{defn} For a fixed harmonic map $\psi(q)$ we denote by $Q_1,Q_2$ the quadratic forms
\ben
Q_1:H^1\ra\R,\qquad Q_1(Y)&=&\int_\Sigma\left\{|Y_x|^2+|Y_y|^2-(|\psi_x|^2+|\psi_y|^2)|Y|^2-4(\psi_x\cdot Y_x
+\psi_y\cdot Y_y)\psi\cdot Y\right\}\\
Q_2:H^3\ra\R,\qquad Q_2(Y)&=&Q_1(LY).
\een
Note that both $Q_1$ and $Q_2$ are continuous, and that $Q_1(Y)=\ip{Y,LY}$ for all $Y\in H^2$.
It is also convenient to define the projection map $P:H^k\ra H^k$
$$
P(Y)=Y-(\psi\cdot Y)\psi
$$
which pointwise orthogonally projects $Y(p)$ to $T_{\psi(p)}S^2$. 
\end{defn}

\begin{lemma}\label{projlem} For all $Y\in H^2$, $Q_1(Y)\geq Q_1(P(Y))$.
\end{lemma}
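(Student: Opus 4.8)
The plan is to split $Y$ into its pointwise tangent and normal parts and exploit two structural facts already established: that $Q_1(\cdot)=\ip{\cdot,L_\psi\cdot}$ on $H^2$, and that $L_\psi$ is self-adjoint and restricts to $J_\psi$ on tangent sections (in particular it maps tangent sections to tangent sections). Concretely, write $Y=P(Y)+\alpha\psi$ with $\alpha=\psi\cdot Y$; since $\psi$ is smooth and $Y\in H^2$, the algebra property (Proposition \ref{algebra}) gives $\alpha\in\hh^2$, so $P(Y)$ and $\alpha\psi$ both lie in $H^2$ and the identity $Q_1(\cdot)=\ip{\cdot,L_\psi\cdot}$ applies to each summand. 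Expanding via bilinearity of $\ip{\cdot,\cdot}$ and linearity of $L_\psi$,
\[
Q_1(Y)=\ip{P(Y),L_\psi P(Y)}+\ip{P(Y),L_\psi(\alpha\psi)}+\ip{\alpha\psi,L_\psi P(Y)}+\ip{\alpha\psi,L_\psi(\alpha\psi)}.
\]

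The next step is to dispose of the first three terms. Because $P(Y)$ is a tangent section, $L_\psi P(Y)=J_\psi P(Y)$ is again tangent, whereas $\alpha\psi$ is normal; tangent and normal sections are pointwise, hence $L^2$, orthogonal, so the third term vanishes, and by self-adjointness of $L_\psi$ so does the second (it equals $\ip{J_\psi P(Y),\alpha\psi}=0$). The first term is, again using $L_\psi=J_\psi$ on tangent sections, exactly $\ip{P(Y),L_\psi P(Y)}=Q_1(P(Y))$.

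It then remains to show $\ip{\alpha\psi,L_\psi(\alpha\psi)}\geq0$, and here I would invoke Remark \ref{Lperp}: $L_\psi(\alpha\psi)=-(\Delta\alpha)\psi-4(\alpha_x\psi_x+\alpha_y\psi_y)$. Pairing against $\alpha\psi$ and using $|\psi|^2\equiv1$, which forces $\psi\cdot\psi_x=\psi\cdot\psi_y=0$, kills the gradient term and leaves $\ip{\alpha\psi,L_\psi(\alpha\psi)}=-\int_\Sigma\alpha\,\Delta\alpha=\int_\Sigma|\nabla\alpha|^2\geq0$, the integration by parts being legitimate since $\Sigma$ is closed and $\alpha\in\hh^2$. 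Assembling the four pieces gives $Q_1(Y)=Q_1(P(Y))+\|\nabla(\psi\cdot Y)\|_0^2\geq Q_1(P(Y))$, as required. I do not expect a genuine obstacle here; the only points needing care are verifying that the decomposition stays inside $H^2$ so that $Q_1(\cdot)=\ip{\cdot,L_\psi\cdot}$ can be applied termwise, and correctly tracking which cross pairings vanish — both of which rest squarely on self-adjointness of $L_\psi$ and the fact that $L_\psi$ sends tangent sections to tangent sections.
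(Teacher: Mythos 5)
Your proof is correct and follows essentially the same route as the paper: decompose $Y$ into its tangent part $P(Y)$ and normal part $(\psi\cdot Y)\psi$, kill the cross terms using self-adjointness of $L_\psi$ together with the fact that $L_\psi$ preserves tangent sections, and then invoke Remark \ref{Lperp} to reduce the normal contribution to $-\ip{\alpha,\Delta\alpha}=\|\nabla\alpha\|_0^2\geq 0$. The only differences from the paper's argument are cosmetic (you spell out the vanishing of each cross term separately and note explicitly why the gradient term in $L_\psi(\alpha\psi)$ drops out).
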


\begin{proof} $Y=P(Y)+f\psi$, where $f=-\psi\cdot Y\in\hh^2$. Now
\bea
Q_1(Y)&=&\ip{Y,LY}=\ip{P(Y),LP(Y)}+2\ip{f\psi,LP(Y)}+\ip{f\psi,L(f\psi)}\nonumber\\
&=&\ip{P(Y),LP(Y)}+\ip{f\psi,L(f\psi)}
\eea
since $L$ is self-adjoint and maps tangent sections to tangent sections. But, as we saw in Remark \ref{Lperp},
\beq
L(f\psi)=-(\Delta f)\psi-4(f_x\psi_x+f_y\psi_y),
\eeq
so $\ip{f\psi,L(f\psi)}=-\ip{f,\Delta f}\geq 0$.
\end{proof}

If our error section $Y$ were a tangent section, the results of \cite{hasspe} would immediately imply that
$Q_1$ is coercive, that is, $Q_1(Y)\geq c(q)\|Y\|_1^2$, orthogonal to $\ker J$:

\begin{thm}[Haskins-Speight, \cite{hasspe}]\label{mokme}
There exists a  constant $c(q)>0$, depending continuously on $q$, such that
$$
Q_1(Y)\geq c(q)\|Y\|_1^2
$$
for all $Y\in H^1$ satisfying $\psi\cdot Y=0$, $L^2$ orthogonal to $\ker J_{\psi(q)}$.
\end{thm}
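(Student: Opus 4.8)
The plan is to reduce the statement, for tangent sections, to the classical coercivity of the Jacobi operator on the orthogonal complement of its kernel, which in turn is just a spectral gap plus a G\r{a}rding inequality. First I would observe that on a tangent section the quartic cross term drops out of $Q_1$: since $\psi\cdot Y=0$ pointwise, the term $-4(\psi_x\cdot Y_x+\psi_y\cdot Y_y)\,\psi\cdot Y$ vanishes identically, and integrating $|Y_x|^2+|Y_y|^2$ by parts on the closed surface $\Sigma$ (no boundary terms) turns $Q_1(Y)$ into $\ip{Y,J_\psi Y}$, where $J_\psi$ is the classical, self-adjoint, elliptic Jacobi operator of $\psi(q):\Sigma\ra S^2$ acting on sections of $\psi^{-1}TS^2$. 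So it is enough to prove $\ip{Y,J_\psi Y}\geq c(q)\|Y\|_1^2$ for all $Y\in H^1(\psi^{-1}TS^2)$ that are $L^2$ orthogonal to $\ker J_\psi$.

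Second, I would invoke the spectral theory of $J_\psi$. On the compact $\Sigma$ this self-adjoint elliptic operator has discrete spectrum, bounded below, with finite-dimensional eigenspaces, and the quadratic form $Y\mapsto\ip{Y,J_\psi Y}$ extends continuously to the form domain $H^1$. By the Lichnerowicz argument $\psi(q)$ minimises $E$ in its homotopy class, so $\spec J_\psi\subset[0,\infty)$; and $\ker J_\psi=T_\psi\M_n$ has real dimension exactly $4n$ --- this is precisely the Riemann--Roch / Serre duality computation already carried out in the proof of Proposition \ref{psiprop}, which applies verbatim to any $\phi\in\M_n$. Hence $J_\psi$ has a spectral gap: its smallest positive eigenvalue $\lambda_1(q)$ is strictly positive, and by the min--max principle $\ip{Y,J_\psi Y}\geq\lambda_1(q)\,\|Y\|_0^2$ for every $Y\in H^1$ orthogonal to $\ker J_\psi$.

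Third comes the G\r{a}rding step upgrading this $L^2$ bound to an $H^1$ bound. With $M(q)=\sup_\Sigma(|\psi_x|^2+|\psi_y|^2)<\infty$ (continuous in $q$ since $\psi$ is smooth in $(q,z)$ and $\Sigma$ compact) one has
\[
\ip{Y,J_\psi Y}=\|Y_x\|_0^2+\|Y_y\|_0^2-\int_\Sigma(|\psi_x|^2+|\psi_y|^2)|Y|^2\ \geq\ \|Y\|_1^2-(1+M(q))\|Y\|_0^2 .
\]
Feeding in $\|Y\|_0^2\leq\lambda_1(q)^{-1}\ip{Y,J_\psi Y}$ and solving for $\ip{Y,J_\psi Y}$ gives
\[
Q_1(Y)=\ip{Y,J_\psi Y}\ \geq\ \frac{\lambda_1(q)}{\lambda_1(q)+1+M(q)}\,\|Y\|_1^2\ =:\ c(q)\|Y\|_1^2 .
\]

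Finally, continuity of $c(q)$ reduces to that of $M(q)$, which is immediate, and of $\lambda_1(q)$, where I expect the only real work to lie. The family $q\mapsto J_{\psi(q)}$ has coefficients depending smoothly (hence, on compact $\Sigma$, continuously in every $C^r$ norm) on $q$, and the bundles $\psi(q)^{-1}TS^2$ sit inside the fixed trivial bundle $\ul{\R^3}$, so the operators may be compared on a common $L^2$ space; standard perturbation theory then gives continuity of each eigenvalue. Since $\dim\ker J_{\psi(q)}=4n$ is constant (Proposition \ref{psiprop}), ordering eigenvalues with multiplicity as $0=\mu_1(q)=\cdots=\mu_{4n}(q)<\mu_{4n+1}(q)\leq\cdots$, we have $\lambda_1(q)=\mu_{4n+1}(q)$, which is continuous and strictly positive; hence $c(q)$ is continuous and positive. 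The main obstacle is thus the continuous dependence of the spectral gap on $q$ --- everything else is an integration by parts and the min--max principle.
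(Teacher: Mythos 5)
The paper supplies no proof here: the theorem is quoted verbatim from \cite{hasspe}, so there is no internal argument against which to check your proposal. That said, your reduction is the natural one and is essentially correct. For tangent sections the cross term of $Q_1$ drops out and integration by parts identifies $Q_1(Y)$ with the quadratic form of the classical Jacobi operator $J_\psi$; since $J_\psi$ is self-adjoint and elliptic with non-negative spectrum and kernel of constant finite dimension $4n$ (by Proposition \ref{psiprop}, via Riemann--Roch and Serre duality), a spectral gap on $(\ker J_\psi)^\perp$ combined with the G\r{a}rding inequality you wrote down yields coercivity with the constant $c(q)=\lambda_1(q)/(\lambda_1(q)+1+M(q))$. (Minor slip: the dropped term is quadratic in $Y$, not quartic.)

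The one step worth tightening is the continuity of $\lambda_1(q)$. Perturbation theory of eigenvalues is formulated for a family of operators on a \emph{fixed} Hilbert space, whereas $J_{\psi(q)}$ acts on sections of the $q$-dependent bundle $\psi(q)^{-1}TS^2$; embedding all of these in $\ul{\R^3}$ puts the spaces inside a common ambient $L^2$, but the domains are still different closed subspaces, and that by itself does not license a direct appeal to Kato-type results. Two standard repairs: conjugate with the smoothly $q$-dependent orthogonal projections $P_q:\Gamma(\ul{\R^3})\to\Gamma(\psi(q)^{-1}TS^2)$ so that $P_{q_0}P_q$ gives near-isometries between the spaces for $q$ near $q_0$, and apply eigenvalue perturbation on the fixed space $\Gamma(\psi(q_0)^{-1}TS^2)$; or sidestep eigenvalue perturbation entirely and prove positivity and continuity of $c(q)$ directly by compactness, taking a minimising sequence $Y_n$ for $Q_1(q_n,\cdot)$ on the constraint sphere $\|Y\|_1=1$ with $q_n\to q_0$, using your G\r{a}rding bound to see the weak-$H^1$ limit is non-zero in $L^2$, and extracting a contradiction if $c(q_n)\to 0<c(q_0)$. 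Either route closes the argument, and the overall strategy is, as you anticipated, the only natural one.
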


Unfortunately, $\psi\cdot Y\neq 0$
in our set-up, but is small (of order $\eps^2$). This means we can only establish the following ``near coercivity''
property for $Q_1$. This will suffice for our purposes, however.

\begin{thm}[Near coercivity of $Q_1$]\label{Q1thm} There exist constants $c(q),\wt{c}(q)>0$, 
depending continuously on $q$, such that
$$
Q_1(Y)\geq c(q)\|Y\|_1^2-\eps^2\wt{c}(q)\|Y\|_1\|Y\|_2^2
$$
for all $Y\in H^2$ satisfying the pointwise constraint (\ref{pc}), $L^2$ orthogonal to $\ker J_{\psi(q)}$.
\end{thm}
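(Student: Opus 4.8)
The plan is to deduce the near-coercivity of $Q_1$ on the (non-tangent) error section $Y$ from the honest coercivity of $Q_1$ on tangent sections, Theorem~\ref{mokme}, by pointwise-projecting $Y$ onto $\psi^{-1}TS^2$ and paying an $O(\eps^2)$ price that is controlled by the pointwise constraint. Concretely, I would write $Y=P(Y)+f\psi$ where, by (\ref{pc}), $f=-\psi\cdot Y=\tfrac12\eps^2|Y|^2\in\hh^2$, so the normal part $f\psi$ of $Y$ has size $O(\eps^2)$. By Lemma~\ref{projlem}, $Q_1(Y)\geq Q_1(P(Y))$, so it suffices to bound $Q_1(P(Y))$ from below and then compare $\|P(Y)\|_1$ with $\|Y\|_1$.

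The first real step is to observe that $P(Y)$ is admissible for Theorem~\ref{mokme}: it is a tangent section by the definition of $P$ ($\psi\cdot P(Y)=0$ pointwise), it lies in $H^2\subset H^1$, and, crucially, it is still $L^2$-orthogonal to $\ker J_{\psi(q)}$. This last point is where the argument turns: $\ker J_{\psi(q)}\subset\Gamma(\psi^{-1}TS^2)$ consists of \emph{tangent} sections, so for any $V\in\ker J_{\psi(q)}$ one has $\ip{P(Y),V}=\ip{Y,V}-\ip{f\psi,V}=\ip{Y,V}-\int_\Sigma f\,(\psi\cdot V)=\ip{Y,V}=0$, using $\psi\cdot V=0$ and the orthogonality hypothesis on $Y$. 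Theorem~\ref{mokme} then gives $Q_1(P(Y))\geq c(q)\|P(Y)\|_1^2$ with $c(q)$ continuous in $q$.

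It then remains to absorb the difference between $\|P(Y)\|_1$ and $\|Y\|_1$ into the error term. From $P(Y)=Y-f\psi$ and the triangle inequality, $\|P(Y)\|_1^2\geq\|Y\|_1^2-2\|Y\|_1\,\|f\psi\|_1$. Since multiplication by the smooth map $\psi(q)$ is a bounded operator on $\hh^1$ with norm $C(q)$ depending continuously on $q\in K$, and $\|f\|_1\leq\|f\|_2=\tfrac12\eps^2\|Y\cdot Y\|_2\leq\tfrac12\eps^2\alpha_2\|Y\|_2^2$ by the algebra property (Proposition~\ref{algebra}), one gets $\|f\psi\|_1\leq\tfrac12\eps^2 C(q)\alpha_2\|Y\|_2^2$. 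Chaining the inequalities yields $Q_1(Y)\geq c(q)\|Y\|_1^2-\eps^2\,c(q)C(q)\alpha_2\,\|Y\|_1\|Y\|_2^2$, i.e.\ the claim with $\wt{c}(q)=c(q)C(q)\alpha_2$, continuous in $q$ since $c(q)$ and $C(q)$ are.

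I do not expect a genuine obstacle here: the whole argument reduces to the (elementary) fact that $P$ preserves $L^2$-orthogonality to $\ker J_{\psi(q)}$ precisely because that kernel is tangent, together with a crude algebra bound on $|Y|^2$ and the already-established tangent-section coercivity of Theorem~\ref{mokme}. The only point requiring care is to check that $C(q)$, and hence $\wt{c}(q)$, depends continuously on $q$, which is routine because it is built from $\psi(q)$ and finitely many of its spatial derivatives over the compact set $K$.
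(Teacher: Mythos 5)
Your proof is correct and follows essentially the same route as the paper: project $Y$ to the tangent section $P(Y)$, invoke Lemma~\ref{projlem} to pass from $Q_1(Y)$ to $Q_1(P(Y))$, check that $P(Y)$ remains $L^2$-orthogonal to $\ker J_{\psi(q)}$ because that kernel is pointwise tangent, apply Theorem~\ref{mokme}, and control $\|P(Y)\|_1^2-\|Y\|_1^2$ via the pointwise constraint and the algebra property of $\hh^2$. The only cosmetic difference is that you bound $\|f\psi\|_1$ by treating multiplication by $\psi$ as a bounded operator on $\hh^1$, whereas the paper estimates $\ip{Y,|Y|^2\psi}_1\leq\|Y\|_1\||Y|^2\psi\|_2$ and applies the $\hh^2$ algebra property directly; both give the same form of error term.
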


\begin{proof}
By Lemma \ref{projlem}, $Q_1(Y)\geq \ip{P(Y),LP(Y)}=\ip{P(Y),JP(Y)}$ since $L\equiv J$ on tangent sections.
Given any $Z\in\ker J$, $\ip{Z,P(Y)}=\ip{Z,Y+f\psi}=\ip{Z,Y}=0$, since $Z$ is pointwise orthogonal to $\psi$
and $Y$ is $L^2$ orthogonal to $\ker J$. Hence $P(Y)$ is $L^2$ orthogonal to $\ker J$, and so, by Theorem
\ref{mokme}, there exists a constant $\wt{c}(q)>0$ such that
\beq\label{alan3}
Q_1(Y)\geq \wt{c}(q)\|P(Y)\|_1^2.
\eeq
Now, since $Y$ satisfies (\ref{pc}),
\bea
\|P(Y)\|_1^2&=&\|Y+\frac12\eps^2|Y|^2\psi\|_1^2
\geq\|Y\|_1^2-\eps^2\ip{Y,|Y|^2\psi}_1
\label{alan4}
\eea
and, by the algebra property of $\hh^2$ (Proposition \ref{algebra}),
\bea
\ip{Y,|Y|^2\psi}_1&\leq&\|Y\|_1\| |Y|^2\psi\|_2
\leq C\|Y\|_1\|Y\|_2^2\|\psi\|_2\label{alan5}
\eea
where $C>0$ is a constant depending only on $\Sigma$. Combining (\ref{alan3}), (\ref{alan4}) and (\ref{alan5}), and
noting that $\|\psi\|_2$ depends continuously (in fact smoothly) on $q$, the result
immediately follows.
\end{proof}

\begin{thm}[Near coercivity of $Q_2$]\label{Q2thm} There exist constants $c(q),\wt{c}(q)>0$, depending continuously on $q$, such that
$$
Q_2(Y)\geq c(q)\|Y\|_3^2-\eps^2\wt{c}(q)(\|Y\|_3^3+\eps^2\|Y\|_3^4)
$$
for all $Y\in H^3$ satisfying the pointwise constraint (\ref{pc}), $L^2$ orthogonal to $\ker J_{\psi(q)}$.
\end{thm}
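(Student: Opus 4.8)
The statement is the $H^3$ counterpart of the near-coercivity of $Q_1$ (Theorem \ref{Q1thm}), and I would follow the same architecture: set $Z=L_\psi Y$, so that $Q_2(Y)=Q_1(Z)$, and reduce to $Q_1$ of the pointwise tangential projection $P(Z)$. The first step is to upgrade Lemma \ref{projlem} from $H^2$ to $H^1$: the map $Y\mapsto Q_1(Y)-Q_1(P(Y))$ is continuous on $H^1$ (both $Q_1$ and $P$ being continuous there), non-negative on the dense subset $H^2$, hence non-negative on all of $H^1$; in particular $Q_2(Y)=Q_1(L_\psi Y)\geq Q_1(P(L_\psi Y))$ whenever $Y\in H^3$. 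Next I would check that $P(L_\psi Y)$ is a tangent section $L^2$-orthogonal to $\ker J_\psi$: since each $\psi_\mu$ is tangent, $\ip{P(L_\psi Y),\psi_\mu}=\ip{L_\psi Y,\psi_\mu}$, and since $L_\psi$ is self-adjoint with $\psi_\mu\in\ker J_\psi\subset\ker L_\psi$ (Remark \ref{Lperp}), this equals $\ip{Y,L_\psi\psi_\mu}=0$. Theorem \ref{mokme} then applies to $P(L_\psi Y)$ and gives
$$
Q_2(Y)\;\geq\;Q_1(P(L_\psi Y))\;\geq\;c(q)\|P(L_\psi Y)\|_1^2\;\geq\;0 ,
$$
so $Q_2(Y)\geq 0$ unconditionally; what remains is a lower bound on $\|P(L_\psi Y)\|_1$ in terms of $\|Y\|_3$, up to $O(\eps^2)$ errors.

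For that, I would measure how badly $L_\psi Y$ fails to be a tangent section $L^2$-orthogonal to $\ker L_\psi$. Using the pointwise constraint (\ref{pc}), $Y=P(Y)-\tfrac12\eps^2|Y|^2\psi$, so $L_\psi Y=L_\psi(P(Y))-\tfrac12\eps^2L_\psi(|Y|^2\psi)$; the first term is tangent, and Remark \ref{Lperp} gives $\psi\cdot L_\psi(|Y|^2\psi)=-\Delta(|Y|^2)$, whence
$$
\psi\cdot L_\psi Y=\tfrac12\eps^2\,\Delta(|Y|^2) .
$$
By the algebra property of $\hh^3$ (Proposition \ref{algebra}) and the smooth $q$-dependence of $\psi$, $\|(\psi\cdot L_\psi Y)\psi\|_1\leq C(q)\eps^2\|Y\|_3^2$ with $C(q)$ continuous on $K$. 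For the orthogonality defect, $\ker L_\psi=\ker J_\psi\oplus\ip{\psi}$ and, again by (\ref{pc}), $\ip{Y,\psi}=-\tfrac12\eps^2\|Y\|_0^2$, so the $L^2$-projection of $Y$ onto $\ker L_\psi$ is $r=\|\psi\|^{-2}\ip{Y,\psi}\psi$ with $\|r\|_3\leq C(q)\eps^2\|Y\|_3^2$. Since $L_\psi r=0$, the elliptic estimate (Proposition \ref{elliptic} with $\alpha=0$, applicable because $L_\psi$ is elliptic) applied to $Y-r\perp\ker L_\psi$ yields $\|L_\psi Y\|_1=\|L_\psi(Y-r)\|_1\geq\beta_1\|Y-r\|_3\geq\beta_1(\|Y\|_3-C(q)\eps^2\|Y\|_3^2)$, and hence
$$
\|P(L_\psi Y)\|_1\;\geq\;\|L_\psi Y\|_1-\|(\psi\cdot L_\psi Y)\psi\|_1\;\geq\;\beta_1\|Y\|_3-C'(q)\eps^2\|Y\|_3^2
$$
for a continuous $C'(q)$.

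Finally I would conclude by splitting on the size of $\eps^2\|Y\|_3$. If $C'(q)\eps^2\|Y\|_3\leq\tfrac12\beta_1$, the bound just obtained is at least $\tfrac12\beta_1\|Y\|_3\geq 0$, so it can be squared, giving $\|P(L_\psi Y)\|_1^2\geq\beta_1^2\|Y\|_3^2-2\beta_1C'(q)\eps^2\|Y\|_3^3$ and therefore $Q_2(Y)\geq c(q)\beta_1^2\|Y\|_3^2-2c(q)\beta_1C'(q)\eps^2\|Y\|_3^3$, which is of the claimed form (the missing quartic term only helps). If instead $C'(q)\eps^2\|Y\|_3>\tfrac12\beta_1$, then $\eps^2\|Y\|_3^3>\tfrac{\beta_1}{2C'(q)}\|Y\|_3^2$, so with $\wt{c}(q):=2c(q)\beta_1C'(q)$ the expression $c(q)\beta_1^2\|Y\|_3^2-\eps^2\wt{c}(q)(\|Y\|_3^3+\eps^2\|Y\|_3^4)$ is already negative, hence $\leq Q_2(Y)$ because $Q_2(Y)\geq 0$. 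Replacing $c(q)$ by $c(q)\beta_1^2$ and taking $\wt{c}(q):=2c(q)\beta_1C'(q)$, both continuous on $K$, completes the argument.

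I expect the middle step to be where the real work lies: $L_\psi Y$ obeys neither the pointwise constraint nor exact orthogonality to $\ker L_\psi$, so all the structural identities that made Theorem \ref{Q1thm} clean are now perturbed, and one must track the (continuous) $q$-dependence of each $O(\eps^2)$ error term carefully so that they assemble into precisely the cubic-plus-quartic remainder in the statement — with the unconditional bound $Q_2(Y)\geq 0$ carrying the argument in the regime where $\eps^2\|Y\|_3$ is not small and squaring a lower bound is no longer legitimate.
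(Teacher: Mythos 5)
Your proposal is correct, and it follows the same overall architecture as the paper's proof (pass from $Q_1(L_\psi Y)$ to $Q_1(P(L_\psi Y))$ via Lemma \ref{projlem}, verify $L^2$ orthogonality to $\ker J_\psi$, invoke Theorem \ref{mokme}, then control the normal defect $\psi\cdot L_\psi Y$ and give an elliptic lower bound on $\|L_\psi Y\|_1$). It differs from the paper in two tactical respects. First, to handle the insufficient regularity of $P(L_\psi Y)$, the paper reduces to \emph{smooth} $Y$ by density of smooth maps $\Sigma\to S^2$ among $H^3$ maps, while you instead extend Lemma \ref{projlem} from $H^2$ to $H^1$ by continuity of $Q_1$ and $Q_1\circ P$ and density of $H^2$ in $H^1$; your route is cleaner, since it requires only $Q_1$-continuity and does not raise the question of whether one can approximate $Y$ by smooth sections while simultaneously preserving both the pointwise constraint (\ref{pc}) and $L^2$-orthogonality to $\ker J_\psi$. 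Second, for the lower bound on $\|L_\psi Y\|_1$ the paper splits $Y$ pointwise as $Y=P(Y)-\tfrac12\eps^2|Y|^2\psi$ and applies the elliptic estimate to $J_\psi$ acting on the tangent section $P(Y)$, with the error $\tfrac12\eps^2L_\psi(|Y|^2\psi)$ bounded elementarily; you instead take the $L^2$-projection $r$ of $Y$ onto $\ker L_\psi$ (which reduces to the one-dimensional piece $\langle\psi\rangle$ because $Y\perp\ker J_\psi$, and which (\ref{pc}) shows is $O(\eps^2\|Y\|_3^2)$) and apply Proposition \ref{elliptic} directly to $L_\psi$ on $Y-r\perp\ker L_\psi$. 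Both decompositions yield the same quality of estimate. One small remark: your final case split on the sign of $\beta_1\|Y\|_3-C'(q)\eps^2\|Y\|_3^2$ is correct but unnecessary. If $a\geq 0$ and $a\geq b-c$ with $b,c\geq 0$, then $a^2\geq b^2-2bc$ holds unconditionally: when $b\leq c$ the right-hand side is $\leq -b^2\leq 0\leq a^2$, and when $b>c$ one may square; this is implicitly how the paper closes the argument without branching.
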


\begin{proof} Recall that the pointwise constraint (\ref{pc}) is equivalent to $|\psi+\eps^2 Y|\equiv 1$.
The set of smooth maps $\Sigma\ra S^2$ is dense in the Banach manifold of $H^3$ maps $\Sigma\ra S^2$,
$\psi$ and $\ker J$ are smooth, and $Q_2:H^3\ra\R$ is continuous,
so it suffices to prove the inequality in the case that $Y$ is smooth. So, let
$Y$ be smooth, $L^2$ orthogonal to $\ker J$ and satisfy the pointwise constraint (\ref{pc}). Define the smooth
section $Z=LY$ and the smooth
real functions $\alpha=|Y|^2$ and $\beta=Z\cdot Y$. Since $Y$ satisfies (\ref{pc}), and $\psi$ is harmonic,
it follows that
\beq
\beta=\eps^2\left\{Y\cdot\Delta Y+|Y_x|^2+|Y_y|^2+2(|\psi_x|^2+|\psi_y|^2)|Y|^2\right\}.
\eeq
In the following, $c_1(q),c_2(q),\ldots$ denote positive functions depending continuously on $q$.
We have the following elementary estimate,
\beq
\|\beta\psi\|_1^2\leq c_1(q)\|\beta\|_1^2
\leq\eps^4 c_2(q)\left\{\|Y\|_{C^1}^2\|Y\|_2^2+\|Y\|_{C^0}^2\|Y\|_3^2\right\}.
\eeq
Applying the Sobolev inequalities (Proposition \ref{sobolev}) gives
\beq
\label{betaest}
\|\beta\psi\|_1\leq\eps^2 c_3(q)\|Y\|_3^2.
\eeq
We will also need to estimate $\|\alpha\|_3$. Again, we have an elementary estimate
\beq
\|\alpha\|_3^2\leq c\left\{\|Y\|_{C^1}^2\|Y\|_2^2+\|Y\|_{C^0}^2\|Y\|_3^2\right\}
\eeq
which, on appealing to Proposition \ref{sobolev} yields
\beq\label{alphaest}
\|\alpha\|_3\leq c\|Y\|_3^2.
\eeq

By Lemma \ref{projlem}, 
\beq
Q_2(Y)=Q_1(Z)\leq Q_1(P(Z))=\ip{P(Z),JP(Z)}.
\eeq
It is in the last step that we have used the smoothness of $Y$ (assuming only $Y\in H^3$ gives
$P(Z)\in H^1$, which is not sufficiently regular to make sense of $JP(Z)$).
 Since $L$ is self-adjoint and $\ker J\subset\ker L$,
$Z=LY$ is automatically $L^2$ orthogonal to $\ker J$, as is $P(Z)=Z+\beta\psi$
(since $\psi$ is {\em pointwise} orthogonal to anything in $\ker J$). Hence, by Theorem \ref{mokme} and
the estimate (\ref{betaest})
\bea
Q_2(Y)&\geq& c_4(q)\|P(Z)\|_1^2
=c_4(q)\|Z-\beta\psi\|_1^2
\geq c_4(q)\left\{\|Z\|_1^2-\|Z\|_1\|\beta\psi\|_1\right\}\nonumber\\
&\geq& c_4(q)\left\{\|Z\|_1^2-\eps^2 c_3(q)\|Y\|_3^2\|Z\|_1\right\}\label{alan1}.
\eea
We next estimate $\|Z\|_1=\|LY\|_1$ in terms of $\|Y\|_3$.
Note that $Y$ is {\em not} $L^2$ orthogonal to $\ker L$, since it has a component in the direction
of $\psi$, so we cannot apply the standard elliptic estimate for $L$ directly. We must decompose 
\beq
Y=P(Y)+(\psi\cdot Y)\psi=P(Y)-\frac12\eps^2\alpha\psi,
\eeq
using (\ref{pc}),
 and handle the two terms separately. Then, by
Proposition \ref{elliptic} (for the lower bound on $\|JP(Y)\|_1$), and an elementary estimate (for the upper bound
on $\|JP(Y)\|_1$),
\bea
\|JP(Y)\|_1-\frac12\eps^2\|L(\alpha\psi)\|_1\leq &\|Z\|_1&\leq
\|JP(Y)\|_1+\frac12\eps^2\|L(\alpha\psi)\|_1\nonumber\\
c_5(q)\|P(Y)\|_3-\eps^2c_6(q)\|\alpha\|_3\leq &\|Z\|_1&\leq
c_7(q)\|P(Y)\|_3+\eps^2c_6(q)\|\alpha\|_3\nonumber\\
c_5(q)\|Y\|_3-\eps^2c_8(q)\|\alpha\|_3\leq &\|Z\|_1&\leq
c_7(q)\|Y\|_3+\eps^2c_8(q)\|\alpha\|_3\nonumber\\
c_5(q)\|Y\|_3-\eps^2c_9(q)\|Y\|_3^2\leq &\|Z\|_1&\leq c_7(q)\|Y\|_3+\eps^2c_9(q)\|Y\|_3^2
\label{alan2}
\eea
where we have used (\ref{alphaest}) in the last line. Combining (\ref{alan1}) and (\ref{alan2}), the result
immediately follows.
\end{proof}

\begin{remark} Since $K$ is compact, we can replace $c(q)$, $\wt{c}(q)$ in
Theorems \ref{Q1thm}, \ref{Q2thm} by global constants $C,\wt{C}>0$,
under the extra assumption that $q\in K$.
\end{remark}

\section{Energy estimates for the coupled system}\news
\label{energyestimates}

Having shown that $Q_2(Y)$ controls $\|Y\|_3^2$, for small $\eps$, we must now bound the growth of $Q_2(Y)$
for a solution $(q,Y)$ of the coupled system. We do this by establishing quasi-conservation of energies
$E_1,E_2$, related to $Q_1,Q_2$:

\begin{defn}\label{Edef} Let $(q,Y):[0,T]\ra K\times H^3$ with the regularity of Theorem \ref{locex}. Associated to
$(q,Y)$ we define the energies $E_1,E_2:[0,T]\ra\R$,
$$
E_1(t)=\frac12\|Y_t\|_0^2+\frac12Q_1(Y),\qquad
E_2(t)=\frac12\|(LY)_t\|_0^2+\frac12Q_2(Y).
$$
Note that $E_1$ is $C^1$ and $E_2$ is continuous.
\end{defn}

Throughout this section we will use the following 

\begin{conv}\label{convention}
$C$ will denote a positive constant depending (at most) on the choice of $\Sigma$ and $K$.
$c(a_1,a_2,\ldots,a_p)$ will denote a smooth positive bounding function of $p$ non-negative
real arguments, which may also depend (implicitly) on $\Sigma, K$,
and which is increasing in each of its arguments.
$C_0$, $c_0$ will denote that the constant or bounding function depends, 
in addition, on the initial data $q_0,q_1,Y_0,Y_1$.
 The value of $C,C_0,c,c_0$ may vary from line to
line.
\end{conv}

\begin{thm}[Quasi-conservation of $E_1$]\label{E1cons}
Let $(q,Y):[0,T]\ra K\times H^3$ be a solution of the coupled system with the initial data and regularity of Theorem
\ref{locex}. Then
$$
E_1(t)\leq C_0+C(|\ddot{q}|+|\dot{q}|^2)\|Y(t)\|_0+
\eps\int_0^t c(|\dot{q}|,|\ddot{q}|,|\dddot{q}|,\|Y\|_3,\|Y_t\|_2).
$$
\end{thm}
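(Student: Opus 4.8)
The plan is to differentiate $E_1(t)$ with respect to $t$, use the wave equation (\ref{YPDE}) satisfied by $Y$ to eliminate $Y_{tt}$, and bound the resulting terms so as to obtain a Gr\"onwall-type integral inequality. First I would compute
$$
E_1'(t)=\ip{Y_{tt},Y_t}+\frac12\frac{d}{dt}Q_1(Y).
$$
The second term is the delicate one: $Q_1$ is a $q$-dependent quadratic form, so $\frac{d}{dt}Q_1(Y)$ splits into a ``$Y$-variation'' piece and a ``$q$-variation'' piece. For the $Y$-variation piece, since $Q_1(Y)=\ip{Y,L_\psi Y}$ and $L_\psi$ is self-adjoint, it contributes $2\ip{Y_t,L_\psi Y}$. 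Combined with $\ip{Y_{tt},Y_t}$ and the wave equation $Y_{tt}+L_\psi Y=k+\eps j'$, these terms telescope to $\ip{Y_t,k+\eps j'}$. Crucially $k=-(\psi_{\tau\tau}+|\psi_\tau|^2\psi)$ and $\tau=\eps t$, so $\psi_{\tau\tau}$ carries a hidden factor of $\eps^2$ relative to $t$-derivatives while being $O(|\ddot q|+|\dot q|^2)$ in moduli-space data; the $\eps j'$ term is genuinely $O(\eps)$ and polynomially bounded in $\|Y\|_3,\|Y_t\|_2$ by the algebra property (Proposition \ref{algebra}) together with the Cauchy--Schwarz inequality.

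The $k$-term, $\ip{Y_t,k}$, is the one that produces the non-integral contribution $C(|\ddot q|+|\dot q|^2)\|Y\|_0$ in the statement. The natural move is to integrate by parts in $t$: write $\ip{Y_t,k}=\frac{d}{dt}\ip{Y,k}-\ip{Y,k_t}$. Then $\int_0^t\ip{Y_s,k}\,ds=\ip{Y(t),k(t)}-\ip{Y(0),k(0)}-\int_0^t\ip{Y,k_t}$. The boundary term at $t$ is bounded by $\|Y(t)\|_0\|k(t)\|_0\leq C(|\ddot q|+|\dot q|^2)\|Y(t)\|_0$ (using smoothness of $\psi$ on the compact set $K$ and $|\psi_\tau|^2\psi$ being quadratic in $\dot q$); the boundary term at $0$ is absorbed into $C_0$ since the initial data are fixed; and $\ip{Y,k_t}$ involves $k_t=-\eps(\psi_{\tau\tau\tau}+\cdots)$, hence is $O(\eps)$ and bounded by $c(|\dot q|,|\ddot q|,|\dddot q|,\|Y\|_3)$, feeding into the integral term. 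The $q$-variation piece of $\frac12\frac{d}{dt}Q_1(Y)$, namely the term where the $t$-derivative hits the coefficients $\psi_x,\psi_y,\psi$ appearing in $Q_1$, is handled similarly: each such derivative is $\dot q^\mu\,\cd_\mu(\cdots)=\eps\,\dot q^\mu\,\cd_\mu(\cdots)$ after converting to the slow time, wait—more carefully, $\frac{d}{dt}\psi=\dot q^\mu\psi_\mu\cdot\eps$ since $q=q(\tau)$, $\tau=\eps t$; so this piece is manifestly $O(\eps)$, quadratic-or-lower in $Y$, and bounded by $\eps\, c(|\dot q|,\|Y\|_2)$, again feeding the integral.

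Assembling, one gets $E_1(t)=E_1(0)+\ip{Y(t),k(t)}-\ip{Y_0,k(0)}+\eps\int_0^t(\text{bounded stuff})$, and since $E_1(0)$ depends only on fixed initial data, it is $C_0$; the bound then follows by recognizing $|\ip{Y(t),k(t)}|\leq C(|\ddot q(t)|+|\dot q(t)|^2)\|Y(t)\|_0$. The main obstacle I anticipate is bookkeeping the powers of $\eps$ correctly: one must consistently remember that $\tau=\eps t$ so that every $t$-derivative of a quantity depending on $\psi(q(\tau))$ produces a factor of $\eps$, and that $k$ itself, while $O(1)$ in $\eps$, has a $t$-derivative that is $O(\eps)$ — this is precisely what makes the $\ip{Y_t,k}$ term, after integration by parts, split into a non-integral boundary piece (the $C(|\ddot q|+|\dot q|^2)\|Y\|_0$ term, which is \emph{not} small but is controlled because we will later show $\|Y\|_0$ is small) plus an $O(\eps)$ integral piece. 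A secondary technical point is ensuring all the nonlinear terms in $j'$ and $k_t$ really do land in $L^2$ with the claimed polynomial bounds; this is exactly where Proposition \ref{algebra} and Proposition \ref{sobolev} are invoked, noting $j'$ contains no derivatives of $Y$ higher than first and $Y_x,Y_y,Y_t\in H^2$.
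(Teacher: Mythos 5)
Your proposal is correct and follows essentially the same route as the paper: differentiate $E_1$, exploit self-adjointness of $L_\psi$ and the wave equation to arrive at $\ip{Y_t,k+\eps j'}+\frac12\eps\ip{Y,L_\tau Y}$, integrate by parts in $t$ on $\ip{Y_t,k}$ to split it into the boundary term $\ip{Y(t),k(t)}$ (which gives the non-integral $C(|\ddot q|+|\dot q|^2)\|Y(t)\|_0$ piece) and a remainder $-\eps\ip{Y,k_\tau}$, then bound the $O(\eps)$ integrand via smoothness of $\psi$ on $K$ and the algebra property of $\hh^2$. Your bookkeeping of the $\tau=\eps t$ factor in $k_t=\eps k_\tau$ and $L_t=\eps L_\tau$ is exactly what the paper does, and the bounds you invoke match the paper's term-by-term estimates.
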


\begin{proof} The solution satisfies (\ref{cs}) and has $Y\in H^3$, $Y_t\in H^2$, $Y_{tt}\in H^1$, so
\bea
\frac{dE_1}{dt}&=&\ip{Y_t,Y_{tt}}+\ip{Y_t,LY}+\frac12\eps\ip{Y,L_\tau Y}
=\ip{Y_t,k+\eps j'}+\frac12\eps\ip{Y,L_\tau Y}\nonumber\\
&=&\frac{d\: }{dt}\ip{Y,k}-\eps\ip{Y,k_\tau-\frac12L_\tau Y}+\eps\ip{Y_t,j'}\nonumber\\
\Rightarrow
E_1(t)&=&E_1(0)-\ip{Y(0),k(0)}+\ip{Y(t),k(t)}+\eps\int_0^t\left\{\ip{Y,\frac12 L_\tau Y-k_\tau}+\ip{Y_t,j'}\right\}
\nonumber\\
&\leq& C_0+\|k(t)\|_0\|Y(t)\|_0+2\eps\int_0^t\left\{\|Y\|_0^2+\|L_\tau Y\|_0^2+\|k_\tau\|_0^2+\|Y_t\|_0^2
+\|j'\|_0^2\right\}.\nonumber \\
&&
\eea
Now, only the first and zeroth order parts of $L$ depend on time, so it is clear that
\beq
\|L_\tau Y\|_0\leq C|\dot{q}|\|Y\|_1.
\eeq
Recall that $k=-\psi_{\tau\tau}$, so
\beq
\|k(t)\|_0\leq C\|k(t)\|_{C^0}\leq C(|\ddot{q}|+|\dot{q}|^2),
\eeq
and
\beq
\|k_\tau(t)\|_0\leq C\|k_{\tau}(t)\|_{C^0}\leq C(|\dddot{q}|+|\ddot{q}|^2+|\dot{q}|^3+|\dot{q}|^2).
\eeq
Finally, it follows immediately from the algebra property of $\hh^2$ (Proposition \ref{algebra}) that 
$\|j'\|_0\leq\|j'\|_2\leq c(|\dot{q}|,\|Y\|_3,\|Y_t\|_2)$, and the result
directly follows.
\end{proof}

We will need a similar result bounding the growth of $E_2(t)$. Formally, this is obtained by applying
the argument above with $Y$ replaced by $LY$ (which formally solves a PDE of the form $(LY)_{tt}+L(LY)=Lk+O(\eps)$). Unfortunately, this argument is not rigorous since $Y$ is insufficiently regular to make sense of 
expressions like $LY_{tt}$ (recall $Y_{tt}$ is only $H^1$).  

\begin{thm}[Quasi-conservation of $E_2$]\label{E2cons}
Let $(q,Y):[0,T]\ra K\times H^3$ be a solution of the coupled system with the initial data and regularity of Theorem
\ref{locex}. Then
$$
E_2(t)\leq C_0+C(|\ddot{q}|+|\dot{q}|^2)\|Y(t)\|_2+
\eps\int_0^t c(|\dot{q}|,|\ddot{q}|,|\dddot{q}|,\|Y\|_3,\|Y_t\|_2).
$$
\end{thm}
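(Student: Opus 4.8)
The plan is to run the argument of Theorem~\ref{E1cons} on $W:=L_\psi Y$ in place of $Y$, and to circumvent the regularity obstruction noted above by approximating the initial data by smooth data and passing to the limit. Writing $L=L(t)$ for $L_{\psi(q(t))}$ and noting that, since $L+\Delta=B_\psi$ is only \emph{first} order in $Y$, the $t$-derivatives $\partial_t L=\eps L_\tau$ and $\partial_t^2 L=\eps^2 L_{\tau\tau}$ are \emph{first}-order operators with coefficients polynomial in $\dot q,\ddot q$ and smooth in $q$, one differentiates the first equation of (\ref{cs}) twice in $t$ to find that $W$ satisfies
\beq
W_{tt}+LW=Lk+\eps\left(Lj'+2L_\tau Y_t+\eps L_{\tau\tau}Y\right).
\eeq
Since $Q_2(Y)=Q_1(W)=\ip{W,LW}$ and $\|(LY)_t\|_0=\|W_t\|_0$, we have $E_2=\tfrac12\|W_t\|_0^2+\tfrac12\ip{W,LW}$, so, using self-adjointness of $L$ and proceeding exactly as in Theorem~\ref{E1cons} (in particular rewriting $\ip{W_t,Lk}=\tfrac{d}{dt}\ip{LY,Lk}-\eps\ip{LY,(Lk)_\tau}$ and integrating), one obtains, for a \emph{smooth} solution,
\beq
E_2(t)=E_2(0)+\ip{LY(t),Lk(t)}-\ip{LY(0),Lk(0)}+\eps\int_0^t\Big\{\ip{W_t,Lj'+2L_\tau Y_t+\eps L_{\tau\tau}Y}+\tfrac12\ip{W,L_\tau W}-\ip{LY,(Lk)_\tau}\Big\}.
\eeq

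The estimates are then routine. The $t=0$ boundary terms are $\le C_0$ by the bounds $\|Y_0\|_3,\|Y_1\|_2<\Gamma$. For the time-$t$ boundary term, $\|LY(t)\|_0\le C\|Y(t)\|_2$ and, because $k=-\psi_{\tau\tau}$ is a $q$-polynomial combination of $\psi_\mu,\psi_{\mu\nu}$ (each bounded in every $\hh^s$ on $K$), $\|Lk(t)\|_0\le C\|k(t)\|_2\le C(|\ddot q|+|\dot q|^2)$, giving $\ip{LY(t),Lk(t)}\le C(|\ddot q|+|\dot q|^2)\|Y(t)\|_2$ --- the middle term of the claimed bound. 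In the integrand, $\|W_t\|_0=\|(LY)_t\|_0\le C(\|Y_t\|_2+\eps|\dot q|\|Y\|_1)$, $\|Lj'\|_0\le C\|j'\|_2$ is controlled by the algebra property of $\hh^2$ (Proposition~\ref{algebra}), $\|L_\tau Y_t\|_0\le C|\dot q|\|Y_t\|_1$, $\|L_{\tau\tau}Y\|_0\le C(|\ddot q|+|\dot q|^2)\|Y\|_1$, $\|(Lk)_\tau\|_0\le C(|\dddot q|+|\ddot q|^2+|\dot q|^3+|\dot q|^2)$, and --- crucially using that $L_\tau$ is only first order --- $\|L_\tau W\|_0=\|L_\tau LY\|_0\le C|\dot q|\|LY\|_1\le C|\dot q|\|Y\|_3$; assembling, the integrand is $\le c(|\dot q|,|\ddot q|,|\dddot q|,\|Y\|_3,\|Y_t\|_2)$, so the bound holds for smooth solutions.

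It remains to remove the smoothness assumption, which is the one genuinely delicate point: the computation above is legitimate only when $Y_{tt}\in H^2$ (so that $W_{tt}$ and $\tfrac{dE_2}{dt}$ make sense), whereas for data in $H^3\times H^2$ one only has $Y_{tt}\in H^1$. I would therefore choose smooth $Y_0^i\to Y_0$ in $H^3$ and $Y_1^i\to Y_1$ in $H^2$, still inside the $\Gamma$-ball, and let $(q^i,Y^i)$ be the associated solutions of (\ref{cs}) on the same interval $[0,T]$ (Theorem~\ref{locex}); these are smooth, by the regularity part of Theorem~\ref{waveest} together with $q^i_{tt}=\eps^2 f(\omega^i)$ and $q^i_{ttt}=\eps^2\d f^{ext}_{\omega^i}\omega^i_t$. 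Applying Theorem~\ref{waveest} to the difference $Y^i-Y^j$ and using the Lipschitz bound for $g$ on $X_\Gamma$ (Corollary~\ref{fglip}), and recalling that $T$ satisfies $\sqrt{T}\le 1/(8c(\Sigma)\Lambda_g)$ (one of the conditions defining $T$ in the proof of Theorem~\ref{locex}), the resulting Gr\"onwall estimate closes directly, so $(q^i,Y^i)$ is Cauchy in $C([0,T],\R^{4n}\times H^3)$, $Y^i_t$ is Cauchy in $C([0,T],H^2)$, and hence --- by uniqueness --- $Y^i\to Y$ in $C([0,T],H^3)$, $Y^i_t\to Y_t$ in $C([0,T],H^2)$ and (via $q^i_{tt},q^i_{ttt}$ and continuity of $f,\d f^{ext}$) $q^i\to q$ in $C^3([0,T],\R^{4n})$. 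Since $E_2$ is continuous on $H^3\times H^2$ (Definition~\ref{Edef}) and every term on the right-hand side of the integrated identity above is a continuous functional of $(Y,Y_t,\dot q,\ddot q,\dddot q)$ converging uniformly on $[0,T]$, the inequality for $(q^i,Y^i)$ passes to the limit, giving the stated bound for $(q,Y)$. The main obstacle is thus not the energy identity --- a lengthy but routine variant of Theorem~\ref{E1cons} --- but this last step: upgrading continuous dependence of solutions of (\ref{cs}) on their data to the strong topology $C([0,T],H^3\times H^2)$ required to carry $E_2$, which sees three derivatives of $Y$ and is only known to be continuous, through the limit.
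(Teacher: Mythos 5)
Your formal energy identity for $W=LY$, together with the ensuing termwise estimates, is correct and parallels the paper's. The continuous-dependence step you flag as the main obstacle is, in fact, fine: the Cauchy argument for $(q^i,Y^i)$ via Theorem~\ref{waveest} and Corollary~\ref{fglip} does close, giving convergence of $(Y^i,Y^i_t)$ in $C([0,T],H^3\times H^2)$ and of $q^i$ in $C^3([0,T],\R^{4n})$. The genuine gap is elsewhere. You assert that the approximating solutions of (\ref{cs}) with smooth initial data are themselves smooth ``by the regularity part of Theorem~\ref{waveest},'' but that theorem concerns the \emph{linear} driven wave equation with a \emph{given} smooth source; here the source $g(\omega^i)$ depends on $Y^i$ itself, and Theorem~\ref{locex} only delivers $Y^i\in C^0H^3\cap C^1H^2\cap C^2H^1$. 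That is not enough regularity to make sense of $(LY^i)_{tt}$ in $L^2$, which your differentiation of $E_2$ requires (for instance $LY^i_{tt}$ needs $Y^i_{tt}\in H^2$, and one only has $H^1$). Closing this would require a separate persistence-of-regularity result for the nonlinear system (\ref{cs}) --- well-posedness in $H^{k+1}\times H^k$ for all $k$ on the same interval --- which is standard for semilinear wave equations but is neither established in the paper nor a consequence of what is stated.

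The paper avoids this by working not with solutions of (\ref{cs}) at all, but with the Picard iterates $(q^i,Y^i)$ already built in the proof of Theorem~\ref{locex}. These \emph{are} automatically smooth: each $Y^{i+1}$ genuinely solves a \emph{linear} wave equation whose source is smooth because it is built from the previous smooth iterate, so Theorem~\ref{waveest} applies as stated, and they converge to $(q,Y)$ in the right topologies. The price is that they satisfy the iterated rather than the exact system, so the paper introduces the approximate energies $E^i$ built from $Z^i=-\Delta Y^{i+1}+B^iY^i$, derives the analogue of your identity with extra error terms $B^i(Z^i-Z^{i-1})$, $Z^i_t-Z^{i-1}_t$ and $\hat{k}^i,\hat{j}^i$, and lets those errors vanish as $i\to\infty$. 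Your route would work if you added a higher-regularity local existence lemma; the paper's route requires only what is already proved.
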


\begin{proof} 
By uniqueness, $(q,Y)$ must arise as the limit of an iteratively defined sequence of smooth functions $(q^i,Y^i)$,
as constructed in the proof of Theorem \ref{locex}. Recall that the sections $Y^i$ satisfy the PDEs
\beq
Y^{i+1}_{tt}-\Delta Y^{i+1}+B^iY^i=k^i+\eps(j')^i
\eeq
where $B$ denotes the first and zeroth order piece of $L$ (so $L=-\Delta+B$) and the superscript $i$ on $B^i$,
$k^i$, $(j')^i$ denotes that the quantity is evaluated on the iterate $(q^i,Y^i)$. Recall also
$(Y^i,Y^i_t,Y^i_{tt})\ra
(Y,Y_t,Y_{tt})$ in $C^0(H^3\oplus H^2\oplus H^1)$. Now, for each $i$ define
\bea
Z^i&=&-\Delta Y^{i+1}+B^iY^i\nonumber\\
E^i(t)&=&\frac12\|Z^i_t\|^2+\frac12\ip{Z^i,-\Delta Z^i+B^iZ^i}.
\eea
Each $E^i:[0,T]\ra\R$ is smooth, $E^i\ra E_2$ uniformly on $[0,T]$, $Z^i\ra Z=LY$ in $H^1$ and
$Z^i_t\ra (LY)_t$ in $L^2$. Note that $Z^i$ satisfies the PDE
\beq \label{ZPDE}
Z^i_{tt}-\Delta Z^i+B^iZ^i=B^i(Z^i-Z^{i-1})+\hat{k}^i+\eps\hat{j}^i
\eeq
where $\hat{k}^i=-\Delta k^i+B^ik^{i-1}$ and $\hat{j}^i=-\Delta(j')^i+B^i(j')^{i-1}$. Since $B^i$ is self-adjoint,
one has
\bea
\frac{dE^i}{dt}&=&\ip{Z^i_t,Z^i_{tt}-\Delta Z^i+B^iZ^i}+\frac\eps2\ip{Z^i,B^i_\tau Z^i}\nonumber\\
&=&\ip{Z^i_t,B^i(Z^i-Z^{i-1})+\hat{k}^i}+\eps\left\{\ip{Z^i_t,\hat{j}^i}+\frac12\ip{Z^i,B^i_\tau Z^i}\right\}
\nonumber\\
&=&\frac{d\: }{dt}\ip{Z^i,B^i(Z^i-Z^{i-1})+\hat{k}^i}-\ip{B^iZ^i,Z^i_t-Z^{i-1}_t}\nonumber\\
&&+\eps\{\ip{Z^i,B^i_\tau(\frac12Z^i-Z^{i-1})+\hat{k}^i_\tau}+\ip{Z^i_t,\hat{j}^i}\}.
\eea
Integrating this from $0$ to $t$ and taking the limit $i\ra\infty$ yields
\bea
E_2(t)-E_2(0)&\leq&C_0+\|Z(t)\|_0\|\hat{k}(t)\|_0+C\eps\int_0^t\{\|Z\|_0(\|\hat{k}_\tau\|_0+|\dot{q}|\|Z\|_1)
+\|Z_t\|_0\|\hat{j}\|_0\}\nonumber\\ && \label{alan10}
\eea
where we have used the facts that $Z^i_t-Z^{i-1}_t\ra 0$ in $L^2$, $Z^i-Z^{i-1}\ra 0$ in $H^1$,
$\hat{k}^i\ra\hat{k}=Lk$ in $H^3$, $\hat{k}^i_\tau\ra\hat{k}_\tau$ in $H^3$ and $\hat{j}^i\ra\hat{j}=Lj'$
in $L^2$ (since $L:H^2\ra L^2$ and $j:K\times\R^{4n}\times H^3\times H^2\ra H^2$ are continuous).
Now, we have the elementary estimates
\bea
\|Z\|_k&=&\|LY\|_k\leq C\|Y\|_{k+2},\qquad k=1,2,\nonumber\\
\|Z_t\|_0&=&\|LY_t+\eps B_\tau Y\|_0\leq C(\|Y_t\|_2+\eps|\dot{q}|\|Y\|_1),\nonumber\\
\|\hat{k}\|_0&\leq&C(|\ddot{q}|+|\dot{q}|^2),\nonumber\\
\|\hat{k}_\tau\|_0&\leq&C(|\dddot{q}|+|\ddot{q}|^2+|\dot{q}|^2+|\dot{q}|^3)\nonumber\\
\|\hat{j}\|_0&\leq&C\|j'\|_2\leq c(|\dot{q}|,\|Y\|_3,\|Y_t\|_2)\nonumber.
\eea
Combining these with (\ref{alan10}), the result follows.
\end{proof}

\section{Long time existence and proof of the main theorem}\news
\label{sec:long}

Throughout this section we choose and fix $q_0\in K$ and $q_1\in\R^{4n}$, and denote by $(q,Y)$ 
the solution of the coupled system (\ref{cs})
with initial data $q(0)=q_0$, $\dot{q}(0)=q_1$, $Y(0)=Y_t(0)=0$. By Theorem \ref{locex}, 
provided $\eps<C(K)/\sqrt{|q_1|}$, this solution
exists at least for time $t\in[0,T_0]$, where $T_0$ depends on the initial data, but is independent of $\eps$.
Moreover, the solution is unique, has the advertised regularity, 
satisfies the pointwise and $L^2$ orthogonality constraints (\ref{pc}), 
(\ref{oc}), and obeys the energy estimates of section \ref{energyestimates}. Denote by $q_*(\tau)$ the geodesic
in $(\M_n,\gamma)$ with the same initial data, $q_*(0)=q_0$, $\dot{q}_*(0)=q_1$. Note that
$q_*$, considered as a function of rescaled time $\tau$, is independent of $\eps$.
Since geodesic flow
conserves speed $\gamma(\dot{q}_*,\dot{q}_*)$, which uniformly bounds $|\dot{q}_*|^2$ on $K$, 
there exist $\tau_0 >0$, $\alpha_0>0$,
depending only on the initial data,
such that $q_*$ exists and has 
\beq
|\dot{q}_*|\leq\alpha_0,\quad
|\ddot{q}_*|\leq\alpha_0,\quad
\dist(q(\tau),\cd K))<d/2,
\eeq
 for all $\tau\in[0,\tau_0]$, where $d=\dist(q_0,\cd K)$. Hence, the geodesic $q_*$ exists for time
$t\in[0,\eps^{-1}\tau_0]$ which, for $\eps$ small, exceeds $T_0$. Whilever $q$, $q_*$ both exist, we
define $\eps^2\wt{q}(t)$ to be the error between them, that is
\beq
q=q_*+\eps^2\wt{q},
\eeq
and
\beq
M(s)=\max_{0\leq t\leq s}\left\{\eps^2|\wt{q}(t)|^2+|\wt{q}'(t)|^2+|\wt{q}''(t)|^2+\|Y(t)\|_3^2+\|Y_t(t)\|_2^2\right\},
\eeq
where primes denote differentiation with respect to $t$.
This function, which measures the total error in replacing the wave map $\phi=\psi(q)+\eps^2 Y$ with the geodesic $\psi(q_*)$, is continuous, manifestly increasing, and has initial value $M(0)=0$. Our next task is to
bound its growth. Before doing so, we define another absolute constant (depending only on $K$ and $\Sigma$), 
which will appear frequently in this section:
\beq
\alpha_a=\sup\{\|\gamma^{\mu\nu}\psi_{\nu\lambda}\|\: :\: q\in K, 0\leq \mu,\lambda\leq 4n\}.
\eeq
We will again use Convention \ref{convention} regarding bounding constants and functions.

\begin{thm}[A priori bound] \label{apriori}
Whilever $(q,Y)$ exists, and $t<\tau_0/\eps$, and $M(t)<\eps^{-4}\alpha_a^{-2}$,
$$
M(t)\leq C_0+C_0M(t)^{\frac12}+(\eps^2+\eps t+\eps^2t^2+\eps^4t^4)\frac{c_0(M(t))}{1-\eps^2\alpha_a M(t)^\frac12}.
$$
\end{thm}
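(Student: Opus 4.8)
\emph{Plan.} Fix $t$ in the stated range and write $\rho=(1-\eps^2\alpha_a M(t)^{1/2})^{-1}$. The hypothesis $M(t)<\eps^{-4}\alpha_a^{-2}$ is exactly the statement $\eps^2\alpha_a M(t)^{1/2}<1$, and since for $s\le t$ one has $|\gamma^{\mu\lambda}\ip{Y,\psi_{\lambda\nu}}|\le\alpha_a\|Y\|_0\le\alpha_a M(s)^{1/2}\le\alpha_a M(t)^{1/2}$, the matrix (\ref{suspow}) is invertible along the whole solution, with inverse of norm $\le\rho$; a factor of $\rho$ (or a power of it) is picked up each time this matrix is inverted or differentiated, and these will be collected at the end into the denominator of the statement. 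The strategy, following Stuart, is to feed the bound $M(s)\le M(t)$ ($s\le t$) into the near-coercivity of $Q_1,Q_2$ (Theorems \ref{Q1thm}, \ref{Q2thm}), the quasi-conservation of $E_1,E_2$ (Theorems \ref{E1cons}, \ref{E2cons}), and the modulation equation (\ref{modulational}), so as to bound each of the five ingredients of $M(t)$ in terms of $M(t)$ itself.

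\emph{The modulation parameters.} Conservation of $T+E$ for the wave map $\phi=\psi(q)+\eps^2Y$, together with $E(\phi)\ge 4\pi n$, gives $\|\phi_t\|_0\le\eps|q_1|_\gamma$; writing $\phi_t=\eps\psi_\mu\dot q^\mu+\eps^2Y_t$, using $\|Y_t\|_0\le M(t)^{1/2}$ and Young's inequality, and recalling that $\gamma$ is uniformly nondegenerate on $K$, this yields $|\dot q|^2\le C_0+\eps^2 c_0(M(t))$ — the explicit $\eps^2$ being essential. Solving (\ref{modulational}) for $\ddot q$ with the inverse matrix then gives $|\ddot q|\le\rho\,c_0(M(t))$, and differentiating once more — which introduces $\|Y_{tt}\|_0$, bounded from (\ref{YPDE}) by $C\|Y\|_2+C(|\ddot q|+|\dot q|^2)+\eps\,c_0(M(t))\le\rho\,c_0(M(t))$, plus one further inverse matrix — gives $|\dddot q|\le\rho\,c_0(M(t))$. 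Now apply Theorems \ref{E1cons}, \ref{E2cons}. The boundary term $\ip{Y(t),k(t)}$ with $k=-(\psi_{\tau\tau}+|\psi_\tau|^2\psi)$ simplifies: the orthogonality constraint (\ref{oc}) kills $\ip{Y,\psi_\mu\ddot q^\mu}$ and the pointwise constraint (\ref{pc}) makes $\ip{Y,|\psi_\tau|^2\psi}$ of order $\eps^2$, leaving only $-\ip{Y,\psi_{\mu\nu}}\dot q^\mu\dot q^\nu$, which with the bound on $|\dot q|^2$ is $\le C_0 M(t)^{1/2}+\eps^2 c_0(M(t))$; the $E_2$ analogue behaves the same way. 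The integral terms, whose bounding functions are increasing, are $\le\eps t\,\rho\,c_0(M(t))$ by the bounds just obtained and $\|Y\|_3,\|Y_t\|_2\le M(t)^{1/2}$. Hence $E_1(t),E_2(t)\le C_0+C_0 M(t)^{1/2}+\eps t\,\rho\,c_0(M(t))$.

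\emph{The error section and the modulation error.} From Definition \ref{Edef} and near-coercivity of $Q_2$ (constants uniform on $K$ by the Remark after Theorem \ref{Q2thm}), $\|Y(t)\|_3^2\le C(E_2+\eps^2 c_0(M(t)))$, the $\eps^2$-term absorbing the cubic and quartic corrections of Theorem \ref{Q2thm} (estimated using $\|Y\|_3\le M(t)^{1/2}$ and $\eps^2M(t)\le M(t)$); and combining the elliptic estimate for $L$ (Proposition \ref{elliptic}), the bound on $\|Y_t\|_0$ from $E_1$ and $Q_1$, and the identity $(LY)_t=LY_t+\eps B_\tau Y$ (whose last term carries an explicit $\eps$) gives $\|Y_t(t)\|_2^2\le C(E_1+E_2+\eps^2 c_0(M(t)))$; feeding in the previous display, both are $\le C_0+C_0 M(t)^{1/2}+(\eps^2+\eps t)\rho\,c_0(M(t))$. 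For the error $\wt q$: subtracting the geodesic equation $\ddot q_*^\mu+G^\mu_{\nu\lambda}(q_*)\dot q_*^\nu\dot q_*^\lambda=0$ from (\ref{modulational}) and using $q-q_*=\eps^2\wt q$, every term of $\ddot q^\mu-\ddot q_*^\mu$ carries an explicit $\eps$: the difference $G(q)\dot q\dot q-G(q_*)\dot q_*\dot q_*$ is $O(\eps\,c_0(M(t)))$ since $|q-q_*|,|\dot q-\dot q_*|$ are $O(\eps M(t)^{1/2})$; $\eps h^\mu$ is manifestly such; and $\eps^2\gamma^{\mu\nu}\ip{Y,\psi_{\nu\lambda}}\ddot q^\lambda$, together with the inverse-matrix correction, is $O(\eps^2\rho\,c_0(M(t)))$. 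Thus $|\wt q''(t)|\le\eps\,\rho\,c_0(M(t))$ at each time; since $Y(0)=0$ makes (\ref{modulational}) reduce to the geodesic equation at $t=0$, we have $\wt q(0)=\wt q'(0)=\wt q''(0)=0$, so two $t$-integrations give $|\wt q'(t)|\le\eps t\,\rho\,c_0(M(t))$ and $|\wt q(t)|\le\eps t^2\,\rho\,c_0(M(t))$, i.e.\ $|\wt q''|^2\le\eps^2\rho\,c_0(M)$, $|\wt q'|^2\le\eps^2 t^2\rho\,c_0(M)$, $\eps^2|\wt q|^2\le\eps^4 t^4\rho\,c_0(M)$.

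\emph{Assembly and the main difficulty.} Taking the maximum over $[0,t]$ of the five bounds and renaming bounding functions produces the asserted inequality, the four prefactors $\eps^2,\eps t,\eps^2t^2,\eps^4t^4$ arising respectively from the coercivity corrections, the energy integrals, $\wt q'$, and $\wt q$. I expect the real work to be the bookkeeping of the $q$-dependence: every constant must be carried as an increasing function of $M(t)$ (through the error $\wt q$) and through the inverse of the matrix (\ref{suspow}), since a loss of uniformity over $K$ corresponds to $\psi$ approaching $\cd_\infty\M_n$ — the feature with no counterpart in Stuart's vortex and monopole setting — and the precise forms of $M$ and of this inequality have been designed exactly so that this dependence stays controllable in the bootstrap argument of Section \ref{sec:long}.
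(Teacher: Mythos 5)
Your overall strategy is the same as the paper's---bound $\dot q,\ddot q,\dddot q$ in terms of $M$, feed them into the $E_1,E_2$ quasi-conservation and the near-coercivity of $Q_2$, and integrate the $\wt q$-ODE---but two of your intermediate choices diverge from the paper in ways worth flagging.  First, you bound $|\dot q|^2\le C_0+\eps^2 c_0(M)$ via wave-map energy conservation and the Lichnerowicz bound $E(\phi)\ge 4\pi n$; the paper gets the same thing trivially from $\dot q=\dot q_*+\eps\wt q'$ and the definition of $M$.  Second, and more consequentially, you bound $|\ddot q|$ by inverting the modulation matrix (\ref{suspow}), obtaining $|\ddot q|\le\rho\,c_0(M)$ with $\rho=(1-\eps^2\alpha_a M^{1/2})^{-1}$.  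That bound, fed naively into the boundary term $C(|\ddot q|+|\dot q|^2)\|Y\|_2$ of Theorems \ref{E1cons}, \ref{E2cons}, would yield $\rho\,c_0(M)M^{1/2}$, which is \emph{not} of the required form $C_0M^{1/2}+(\cdots)c_0(M)$; you compensate by reopening the proofs of the energy estimates and using (\ref{oc}) and (\ref{pc}) to kill the $\ddot q$-dependence in $\ip{Y,k}$ (and noting $L\psi_\mu=0$ kills it automatically in $\ip{Z,Lk}$).  That refinement is valid, but it is not a direct application of the stated energy estimates.  The paper avoids the detour by bounding $\wt q''$ \emph{first}, estimating the $\eps^2 a(q,Y,\ddot q_*+\wt q'')$ term on the right of the $\wt q$-ODE simply by $|\wt q''|\le M^{1/2}$ (which is part of the definition of $M$) rather than by inverting the matrix; this gives $|\wt q''|\le\eps\,c_0(M)$ with \emph{no} $\rho$, hence $|\ddot q|\le|\ddot q_*|+|\wt q''|\le\alpha_0+\eps\,c_0(M)$, which goes straight into Theorems \ref{E1cons}, \ref{E2cons} and yields the $C_0M^{1/2}$ boundary term with no surgery on those theorems.

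There is also a genuine slip in your last step.  Your derivation gives $|\wt q''|\le\eps\rho\,c_0(M)$, so squaring gives $|\wt q''|^2\le\eps^2\rho^2c_0(M)$, and likewise $|\wt q'|^2\le\eps^2t^2\rho^2c_0(M)$, $\eps^2|\wt q|^2\le\eps^4t^4\rho^2c_0(M)$; you write $\rho^1$ on all of these.  Since $\rho$ is not bounded by any $c_0(M)$ on the allowed range, $\rho^2$ cannot be absorbed into $c_0$, so as written your chain establishes a bound with $\rho^2$ in place of $\rho$, i.e.\ a statement strictly weaker than the one claimed.  (The weaker version would still suffice for the bootstrap in Theorem \ref{longtime}, since there $\eps\to 0$ with $M=M_*$ fixed and hence $\rho\to1$; but it is not the inequality stated.)  The fix is exactly the paper's trick noted above: do not invert the matrix to solve the $\wt q''$-equation; instead use $|\wt q''|\le M^{1/2}$ on the right-hand side.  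Then $|\wt q''|\le\eps\,c_0(M)$ with no $\rho$ at all, the $\rho$-factor appears only through $|\dddot q|$, and the quadratic terms in $M$ come out with a single power of $\rho$.  Finally, a cosmetic remark: as the paper itself presents the argument (via Young's inequality on $\ip{Y,k_\tau}$ and $\ip{Z,\hat k_\tau}$) one also risks $\rho^2$ from $|\dddot q|^2$; keeping Cauchy--Schwarz linear, $|\ip{Y,k_\tau}|\le\|Y\|_0\|k_\tau\|_0\le M^{1/2}\cdot\rho\,c_0(M)$, preserves the single power of $\rho$, and you should do the same.
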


\begin{proof} 
We first derive the ODE satisfied by $\wt{q}$.
The curve $q(\tau)$ satisfies the ODE
\beq\label{qode}
\ddot{q}+G(q,\dot{q},\dot{q})=\eps h(\eps,q,\dot{q},Y,Y_t)+\eps^2 a(q,\ddot{q},Y)
\eeq
where $G$, $h$ are defined in (\ref{hdef}), (\ref{Gdef}), and
\beq
a:K\times\R^{4n}\times L^2\ra\R^{4n},\quad a(q,v,Y)^\mu=\gamma^{\mu\nu}\ip{\psi_{\nu\lambda},Y}v^\lambda,
\eeq
which is smooth with respect to $q$ and linear with respect to $v$ and $Y$. The geodesic satisfies the ODE
\beq\label{geod}
\ddot{q}_*+G(q_*,\dot{q}_*,\dot{q}_*)=0
\eeq
with the same initial data. Substituting $q=q_*+\eps^2\wt{q}$ into (\ref{qode}), and using (\ref{geod}), we see
that $\wt{q}$ satisfies 
\bea
\wt{q}''&=&[G(q_*,\dot{q}_*,\dot{q}_*)-G(q_*+\eps^2\wt{q},\dot{q}_*,\dot{q}_*)]+
[G(q,\dot{q}_*,\dot{q}_*-G(q,\dot{q}_*+\eps\wt{q}',\dot{q}_*+\eps q')]\nonumber\\
&&+\eps h(\eps,q,\dot{q},Y,Y_t)+\eps^2 a(q,Y,\ddot{q}_*+\wt{q}'').
\eea
Now, $q$, by assumption, remains in $K$, and for all $t\in [0,\tau_0/\eps]$, $q_*$ remains in $K$ and 
$|\dot{q}_*|\leq \alpha_0$,
so, since $G(q,u,v)$ is Lipshitz with respect to $q$ (on $K$) and bilinear in $(u,v)$,
we have
\bea
|\wt{q}''|&\leq& C\alpha_0^2\eps^2|\wt{q}|+C[\eps^2|\wt{q}'|^2+\eps\alpha_0|\wt{q}'|+\eps|\wt{q}'|]
\eps|h|+\eps^2|a|\nonumber\\
&\leq& C_0\eps M^\frac12+C\eps^2 M+\eps|h|+\eps^2|a|.
\eea
To estimate the $h$ term, we note that it is smooth in $q$ and polynomial in $\dot{q}$ and $Y$ and its (first)
derivatives, so in light of Proposition \ref{algebra} we have (for $\eps\leq 1$) the crude bound
\beq
|h(\eps,q,\dot{q},Y,Y_t)|\leq c(|\dot{q}|,\|Y\|_3,\|Y_t\|_2).
\eeq
Now, by the definition of $M$,
\beq\label{qt}
|\dot{q}|\leq |\dot{q}_*|+\eps|\wt{q}'|\leq \alpha_0+\eps M^\frac12,
\eeq
so $|h|\leq c_0(M)$. Turning to $|a|$, we have by linearity,
\beq
|a(q,Y,\ddot{q}_*+\wt{q}'')|\leq \alpha_a\|Y\|(|\ddot{q}_*|+|\wt{q}''|)\leq \alpha_a M^\frac12(\alpha_0+M^\frac12)
\leq c_0(M).
\eeq
Hence,
\beq\label{wqtt}
|\wt{q}''(t)|\leq \eps c_0(M(t)).
\eeq
Now $\wt{q}'(t)=\wt{q}'(0)+\int_0^t\wt{q}''=\int_0^t\wt{q}''$, so
\beq\label{wqt}
|\wt{q}'(t)|\leq\eps\int_0^tc_0(M(s))\, ds\leq \eps t c_0(M(t))
\eeq
since $c_0$ and $M$ are, by definition, increasing. Similarly
\beq\label{wq}
\eps|\wt{q}(t)|\leq\eps\int_0^t|\wt{q}'|\leq \eps^2 t^2 c_0(M(t)).
\eeq

We have now bounded the growth of all the $\wt{q}$ terms in $M$. To bound the growth of $\|Y\|_3$ and $\|Y_t\|_2$,
we will use the energy estimates of section
\ref{energyestimates} and the near coercivity property of $Q_2$ (Theorem
\ref{Q2thm}). But to do this, we need to control 
$|\ddot{q}|$ and $|\dddot{q}|$, which appear in the energy
estimates for $E_1(t)$ and $E_2(t)$, so we have not yet finished with the ODE for $q$. For  
$\ddot{q}$ we have from (\ref{wqtt}) the obvious bounds
\beq\label{qtt}
|\ddot{q}|\leq|\ddot{q}_*|+|\wt{q}''|\leq \alpha_0+\eps c_0(M(t)).
\eeq
For $\dddot{q}$ we must work harder.
So, for fixed $\eps$,
let $h_i$, $i=1,2,3,4$, denote the partial derivatives of $h:K\times\R^{4n}\times H^3\times H^2\ra \R^{4n}$
with respect to each of its four entries. Similarly, let $G_1$, $a_1$ denote the derivatives of $G$, $a$ with respect to their first entries. Differentiating (\ref{qode}) with respect to $\tau$ one finds
\bea
\dddot{q}+G_1(q,\dot{q},\dot{q})\dot{q}+2G(q,\dot{q},\dot{q})\ddot{q}&=&\eps{h_1\dot{q}+h_2\ddot{q}}
+h_3Y_t+h_4Y_{tt}\nonumber\\
&&+\eps^2(a_1(q,\ddot{q},Y)\dot{q}+a(q,\dddot{q},Y))+\eps a(q,\ddot{q},Y_t)
\eea
where we have used the bilinearity properties of $G$ and $a$. Inspecting the formula for $h$ (\ref{hdef}),
we obtain, using (\ref{qtt}) and (\ref{qt}) a crude bound
\bea
|\dddot{q}|&\leq& c_0(M)+c_0(M)\|Y_{tt}\|+\eps^2\alpha_a|\dddot{q}|\|Y\|\nonumber\\
&\leq& c_0(M)+c_0(M)(\|LY\|+\|k\|+\eps\|j'\|)+\eps^2\alpha_a M^\frac12|\dddot{q}|\nonumber\\
&\leq& c_0(M)+c_0(M)(\|Y\|_2+|\ddot{q}|+|\dot{q}|^2+c(|\dot{q}|,\|Y\|_3,\|Y_t\|_2))+\eps^2\alpha_a M^\frac12|\dddot{q}|\nonumber\\
&\leq& c_0(M)+\eps^2\alpha_a M^\frac12|\dddot{q}|.
\eea
Hence
\beq\label{qttt}
|\dddot{q}|\leq \frac{c_0(M)}{1-\eps^2\alpha_a M^\frac12}
\eeq
whilever $M(t)<\eps^{-4}\alpha_a^{-2}$. We can now turn to bounding $Y$.

Since $Q_2$ is nearly coercive (Theorem \ref{Q2thm}),
\beq\label{simc1}
\|Y\|_3^2\leq C\{Q_2(Y)+\eps^2 c(\|Y\|_3)\},
\eeq
and, by the standard elliptic estimate for $L$ (Proposition \ref{elliptic})
\bea
\|Y_t\|_2^2&\leq& C\{\|LY_t\|^2+\|Y_t\|^2\}
\leq C\{\|(LY)_t\|^2+\eps^2\|L_\tau Y\|^2+\|Y_t\|^2\}\nonumber\\
&\leq& C\{\|(LY)_t\|^2+\eps^2|\dot{q}|^2\|Y\|_1^2+\|Y_t\|^2\}\label{simc2}
\eea
since the principal part of $L$ does not depend on time. Adding (\ref{simc1}) and (\ref{simc2}), one sees that
\beq
\|Y\|_3^2+\|Y_t\|_2^2\leq C\{E_1(t)+E_2(t)+\eps^2c(\|Y\|_3)+\eps^2|\dot{q}|^2\|Y\|_1^2\}
\leq C\{E_1(t)+E_2(t)+\eps^2 c_0(M)\},
\eeq
where $E_1,E_2$ are as in Definition \ref{Edef}. 
Then, by Theorem \ref{E1cons} and \ref{E2cons}, and the estimates (\ref{qt}), (\ref{qtt}), (\ref{qttt}),
\bea
\|Y\|_3^2+\|Y_t\|_2^2&\leq& C_0+(C_0+\eps c_0(M))M^\frac12+\eps\int_0^t\left(1+\frac{c_0(M)}{1-\eps^2\alpha_a M^\frac12}\right) c_0(M)+\eps^2 c_0(M)\nonumber\\
&\leq& C_0+C_0M^\frac12+\eps c_0(M)+\frac{\eps tc_0(M)}{1-\eps^2\alpha_a M^\frac12},\label{simc3}
\eea
provided $\eps\leq 1$ and $M(t)<\eps^{-4}\alpha_a^{-2}$.
Combining (\ref{simc3}), (\ref{wq}), (\ref{wqt}) and (\ref{wqtt}) gives the a priori bound claimed.
\end{proof}

\begin{thm}[Long time existence]\label{longtime}
There exist $\eps_*>0$ and $\tau_*>0$, depending only on the initial data, such that for all $\eps\in (0,\eps_*)$
the solution $(q,Y)$ persists for all $t\in[0,\tau_*/\eps]$ and has $M(t)$ bounded, independent of $\eps$.
\end{thm}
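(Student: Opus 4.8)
The plan is to close the argument with a standard continuity (bootstrap) argument, feeding the \emph{self-improving} structure of the a priori bound of Theorem \ref{apriori} into the local existence theorem, Theorem \ref{locex}. The point is that the estimate of Theorem \ref{apriori} reads $M(t)\le C_0+C_0M(t)^{1/2}+(\text{terms small in }\eps\text{ and }\eps t)\cdot c_0(M(t))$, so on any time interval where $M$ is a priori known to lie below a well-chosen threshold $M_*$, the bound in fact forces $M<M_*$ strictly, which leaves room to continue the solution by Theorem \ref{locex}.

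Concretely, I would fix the constants in the following (non-circular) order, all depending only on the initial data (and on the fixed $\Sigma,K$). First choose $M_*>0$ so large that $C_0+C_0M_*^{1/2}\le \tfrac14 M_*$ (possible since the left side grows like $M_*^{1/2}$). Next choose $\tau_*\in(0,\tau_0]$ so small that $2(\tau_*+\tau_*^2+\tau_*^4)\,c_0(M_*)\le\tfrac14 M_*$; keeping $\tau_*\le\tau_0$ guarantees that the geodesic $q_*$ exists on $[0,\tau_*/\eps]$ with $|\dot q_*|,|\ddot q_*|\le\alpha_0$ and $q_*$ a definite distance from $\cd K$, as required by Theorem \ref{apriori}. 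Finally choose $\eps_*>0$ so small that, for all $\eps\in(0,\eps_*)$: (i) $2\eps^2 c_0(M_*)\le\tfrac14 M_*$; (ii) $\eps^2\alpha_a M_*^{1/2}\le\tfrac12$, which both makes the reciprocal factor in Theorem \ref{apriori} at most $2$ and ensures $M_*<\eps^{-4}\alpha_a^{-2}$; (iii) $\eps M_*^{1/2}<d/4$; (iv) $\eps\le 1$ and $\eps<C(K)/\sqrt{\Gamma}$ for the $\Gamma$ governing the restart data below, so that Theorem \ref{locex} is applicable.

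Then I would let $T^\dagger$ be the supremum of those $T\in[0,\tau_*/\eps]$ for which $(q,Y)$ exists on $[0,T]$ with the regularity of Theorem \ref{locex} and $M(t)\le M_*$ for all $t\in[0,T]$. Since $M(0)=0$, $M$ is continuous, and Theorem \ref{locex} gives existence on a fixed interval $[0,T_0]$ with $T_0$ independent of $\eps$, we have $T^\dagger>0$. On $[0,T^\dagger)$ all three hypotheses of Theorem \ref{apriori} hold ($t<\tau_*/\eps\le\tau_0/\eps$ and $M(t)\le M_*<\eps^{-4}\alpha_a^{-2}$), so the a priori bound together with the choices above gives $M(t)\le\tfrac14 M_*+\tfrac14 M_*+\tfrac14 M_*=\tfrac34 M_*<M_*$ for all $t<T^\dagger$. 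A uniform lower bound on local existence times then shows the solution in fact extends to $[0,T^\dagger]$, so $M(T^\dagger)\le\tfrac34 M_*$ by continuity. If $T^\dagger<\tau_*/\eps$, the data $(q(T^\dagger),\dot q(T^\dagger),Y(T^\dagger),Y_t(T^\dagger))$ satisfy the hypotheses of Theorem \ref{locex} with a $\Gamma$ controlled by $M_*^{1/2}$ and with $\dist(q(T^\dagger),\cd K)\ge d/2-\eps^2|\wt q(T^\dagger)|\ge d/4$ (using $\eps^2|\wt q|\le\eps\cdot\eps M_*^{1/2}$ and (iii)), both uniform in $\eps$; hence the solution continues past $T^\dagger$ and, by continuity of $M$, stays below $M_*$ a little longer, contradicting maximality of $T^\dagger$. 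Therefore $T^\dagger=\tau_*/\eps$, the solution persists on $[0,\tau_*/\eps]$, and $M(t)\le M_*$ there with $M_*$ independent of $\eps$, which is the claim.

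I expect the only real obstacle to be bookkeeping: arranging the constants $M_*,\tau_*,\eps_*$ so that none depends circularly on another, and verifying that restarting Theorem \ref{locex} at time $T^\dagger$ is legitimate — i.e. that $q(T^\dagger)$ remains a definite distance from $\cd K$ (via $\eps^2|\wt q(T^\dagger)|\le\eps^2 M_*^{1/2}$) and that the $H^3$, $H^2$ norms of the restart data are bounded uniformly in $\eps$ (immediate from $M\le M_*$). No new analytic input beyond Theorems \ref{locex}, \ref{apriori} (hence ultimately Theorems \ref{Q2thm}, \ref{E1cons}, \ref{E2cons}) is needed.
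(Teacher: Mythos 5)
Your argument is correct and rests on exactly the same two pillars as the paper's proof: the self-improving structure of the a priori bound (Theorem \ref{apriori}), and the fact that Theorem \ref{locex} can be restarted from $(q(T),\dot q(T),Y(T),Y_t(T))$ with data controlled by $M_*$ and with $q(T)$ at a definite distance from $\cd K$, giving a continuation time bounded below uniformly in $\eps$. The only real difference is one of presentation. You run a direct continuity/bootstrap argument: split the error $M_*$ into three equal quarters, pick $\tau_*$ explicitly so the $\eps t,\eps^2 t^2,\eps^4 t^4$ terms are absorbed, and show $M$ stays strictly below $M_*$ on the maximal interval, then continue past it. The paper instead sets $t_\eps=\sup\{t:M(t)\le M_*\}$, observes that the solution persists as long as $M\le M_*$, and then proves $\eps t_\eps$ is bounded away from zero by contradiction: along a hypothetical sequence $\eps_i t_{\eps_i}\to 0$, all the $\eps$-dependent terms in Theorem \ref{apriori} vanish in the limit, forcing $M_*\le C_0+C_0 M_*^{1/2}$, contrary to the choice of $M_*$. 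Both routes are valid; your version has the mild advantage of producing $\tau_*$ explicitly from the inequality $2(\tau_*+\tau_*^2+\tau_*^4)c_0(M_*)\le\tfrac14 M_*$, while the paper's compactness argument is shorter to write and requires only the weaker condition $M_*>C_0+C_0 M_*^{1/2}$ on $M_*$, with no quantitative split into quarters. Your constant bookkeeping ($\eps^2\alpha_a M_*^{1/2}\le 1/2$ to control the denominator, $\eps M_*^{1/2}<d/4$ to keep $q$ inside $K$, $\eps<C(K)/M_*^{1/4}$ to legitimise the restart) matches the paper's conditions (\ref{epscond}) almost verbatim.
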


\begin{proof} Choose and fix a constant $M_*>4\alpha_0^2$ so large that $M_*>C_0+C_0M_*^\frac12$, where $C_0$
is the specific constant, depending only on initial data, appearing in the a priori bound. Assume $\eps>0$
is so small that
\beq\label{epscond}
\eps^2<\frac{1}{\alpha_a M_*^\frac12},\quad
\eps<\frac{d}{4M_*^\frac12},\quad
\eps\leq\frac12,\quad
\eps<\frac{C(K)}{M_*^\frac14}
\eeq
where, as before, $d=\dist(q_0,\cd K)$ and $C(K)>0$ is the constant quoted in Theorem \ref{locex}.
Then, whilever $M(t)\leq M_*$, 
$\|Y\|_3<M_*^\frac12$, $\|Y_t\|_2\leq M_*^\frac12$,
\beq
|\dot{q}|\leq \alpha_0+\eps M_*^\frac12 <M_*^\frac12,
\eeq
and
\beq
\eps^2\|\wt{q}\|\leq \eps M^\frac12<\eps M_*^\frac12<\frac{d}{4}.
\eeq
Hence, whilever $M(t)\leq M_*$, the value of the solution $(q,q_t,Y,Y_t)(t)$ satisfies the conditions of the initial
data for the local existence theorem \ref{locex}, with $\Gamma=M_*^\frac12$ and $\dist(q(t),\cd K)<\frac{d}{4}$.
Given the last condition on $\eps$, (\ref{epscond}), it follows that we may apply Theorem \ref{locex} and
extend the solution for a time $\delta T>0$ depending only on $M_*$ and $d$, independent of $\eps$. It follows
that the solution persists for as long as $M(t)\leq M_*$. Furthermore, by the first condition on $\eps$, 
(\ref{epscond}), whilever $M(t)\leq M_*$ the solution obeys the a priori bound, Theorem \ref{apriori}.

For each $\eps>0$ let $t_\eps=\sup\{t\: :\: M(t)\leq M_*\}$. We claim that there exists $\eps_*>0$ such that
$\eps t_\eps$ is bounded away from zero on $[0,\eps_*]$. Note that this immediately implies the statement in the
theorem since then there exists $\tau_*>0$ such that $t_\eps\geq \tau_*/\eps$ for all $\eps\in(0,\eps_*)$, and the
solutions exists, with $M(t)\leq M_*$ on $[0,\tau_*/\eps]$. Assume, towards a contradiction, that no such $\eps_*$
exists. Then there is a positive sequence $\eps_i\ra 0$ such that $\eps_i t_{\eps_i}\ra 0$. 
But then the a priori bound
at time $t_{\eps_i}$ gives (recall $M$ is continuous, so $M(t_{\eps_i})=M_*$), in the limit $i\ra\infty$,
\beq
M_*<C_0+C_0 M_*^\frac12,
\eeq
a contradiction, by our choice of $M_*$.
\end{proof}

\begin{proof}[Proof of {Main Theorem}]
By Theorem \ref{longtime}, for all $\eps\in (0,\eps_*]$ the solution exists for $t\in[0,\tau_*/\eps]$, and coincides with
$\psi(q_*(\tau)+\eps^2\wt{q}(t))+\eps^2 Y(t)$, with $\eps|\wt{q}(t)|$, $\|Y(t)\|_3$, and hence (by Proposition
\ref{sobolev})
$\|Y(t)\|_{C^0}$  uniformly bounded in
$t$ and $\eps$. 
The rescaled solution is
\beq
\phi^\eps:[0,\tau_*]\times\Sigma\ra S^2\subset \R^3,\qquad
\phi^\eps(\tau,x,y)=\psi(q_*(\tau)+\eps^2\wt{q}(\tau/\eps),x,y)+\eps^2 Y(\tau/\eps,x,y),
\eeq
and the geodesic with the same initial data is
\beq
\psi_*:[0,\tau_*]\times\Sigma,\qquad \psi_*(\tau,x,y)=\psi(q_*(\tau),x,y).
\eeq
Now $\psi:K\times\Sigma\ra S^2$ is smooth, hence uniformly continuous (since $K\times\Sigma$ is
compact). Hence, as $\eps\ra 0$, $\phi^\eps$ converges uniformly on $[0,\tau_*]\times\Sigma$ to $\psi_*$.
Furthermore,
\bea
\phi^\eps_x(\tau,x,y)&=&\psi_x(q_*(\tau)+\eps^2\wt{q}(\tau/\eps),x,y)+\eps^2 Y_x\nonumber\\
\phi^\eps_y(\tau,x,y)&=&\psi_y(q_*(\tau)+\eps^2\wt{q}(\tau/\eps),x,y)+\eps^2 Y_y\nonumber\\
\phi^\eps_\tau(\tau,x,y)&=&(\dot{q}_*^\mu+\eps\wt{q}'(\tau/\eps))\psi_\mu(q_*(\tau)+\eps^2\wt{q}(\tau/\eps),x,y)+\eps Y_t
\eea
and $|\wt{q}'(t)|$, $\|Y_x(t)\|_{C^0}$, $\|Y_y(t)\|_{C^0}$, $\|Y_t(t)\|_{C^0}$ are bounded uniformly in $t$
and $\eps$ (again using Proposition \ref{sobolev}), so $\phi^\eps_x,\phi^\eps_y,\phi^\eps_\tau$ converge
uniformly on $[0,\tau_*]\times\Sigma$ to $\psi_{*x},\psi_{*y},\psi_{*\tau}$. Hence, $\phi^\eps$ converges
to $\psi_*$ in $C^1$.
\end{proof}


\section*{Appendix: Analytic properties of the nonlinear terms}\news

\renewcommand{\thethm}{A.\arabic{thm}}
\renewcommand{\theequation}{A.\arabic{equation}}

The proof of the local existence theorem \ref{locex} makes fundamental use of certain basic
analytic properties (Propositions \ref{fgprops} and \ref{fgdiffprop}) of the right hand sides $f,g$ of the evolution system (\ref{qf}),(\ref{Yg}). 
These properties are established by a long chain
of elementary arguments which we sketch in this appendix. We regard $\eps$ as a fixed parameter in $(0,1)$ and choose a local
parametrization $\psi:U\times\Sigma\ra S^2$ of $\M_n$ as given by Proposition \ref{psiprop}
and a compact convex neighbourhood $K\subset U$. We begin by showing that the associated maps $\Psi_k:U\ra H^k$,
$q\mapsto \psi(q,\cdot)$ are smooth
for all $k\in\N$.

\begin{lemma}\label{fsmooth} Let $k\in\N$ and $f:U\times\Sigma\ra\R$ be smooth. Then $F:U\ra\hh^k$, $F(q)=f(q,\cdot)$, is smooth.
\end{lemma}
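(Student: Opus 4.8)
The plan is to first show that $F$ is $C^1$ with an explicitly identified derivative, and then bootstrap to $C^\infty$ by iterating the same statement. For $q\in U$ and $v\in\R^{4n}$ the candidate derivative is $dF_q(v)=\sum_{\mu=1}^{4n}v^\mu(\cd f/\cd q^\mu)(q,\cdot)$. Since $f$ and each of its $q$-derivatives are smooth on $U\times\Sigma$ and $\Sigma$ is compact, $f(q,\cdot)$ and $(\cd f/\cd q^\mu)(q,\cdot)$ have all their spatial derivatives continuous on $\Sigma$, hence square integrable, so they lie in $\hh^k$; in particular $F$ is well-defined and $dF_q\in{\cal L}(\R^{4n},\hh^k)$.

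To prove differentiability of $F$ at $q$ with this derivative, I would fix a closed ball $\ol{B(q,r)}\subset U$ and apply Taylor's theorem with integral remainder, pointwise in $p\in\Sigma$: for $|v|\leq r$,
$$f(q+v,p)-f(q,p)-\sum_\mu v^\mu\frac{\cd f}{\cd q^\mu}(q,p)=\int_0^1(1-t)\sum_{\mu,\nu}v^\mu v^\nu\frac{\cd^2 f}{\cd q^\mu\cd q^\nu}(q+tv,p)\,dt.$$
Spatial derivatives $D_\alpha$, $|\alpha|\leq k$, commute with the $q$-derivatives and may be moved inside the $t$-integral, and on the compact set $\ol{B(q,r)}\times\Sigma$ every function $D_\alpha(\cd^2 f/\cd q^\mu\cd q^\nu)$ is bounded by some constant $C$. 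Hence $D_\alpha$ of the left-hand side is bounded pointwise on $\Sigma$ by $\tfrac12 C(4n)^2|v|^2$, and squaring, integrating over $\Sigma$, and summing over $|\alpha|\leq k$ gives
$$\|F(q+v)-F(q)-dF_q(v)\|_k\leq C'|v|^2=o(|v|),$$
which is exactly differentiability of $F$ at $q$.

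It remains to check that $q\mapsto dF_q$ is continuous from $U$ into ${\cal L}(\R^{4n},\hh^k)$; identifying this operator space with $(\hh^k)^{4n}$ via the standard basis of $\R^{4n}$, this reduces to continuity of each $q\mapsto(\cd f/\cd q^\mu)(q,\cdot)$, which follows from the mean value theorem (on a compact ball $|D_\alpha(\cd f/\cd q^\mu)(q,p)-D_\alpha(\cd f/\cd q^\mu)(q',p)|\leq C|q-q'|$ pointwise, so the $\hh^k$ distance is $O(|q-q'|)$). This proves $F\in C^1$. For $C^\infty$, observe that $dF$ is the map $q\mapsto\bigl((\cd_1 f)(q,\cdot),\ldots,(\cd_{4n} f)(q,\cdot)\bigr)$, each component of which is of the form covered by the lemma with the smooth function $\cd_\mu f$ in place of $f$; so if the conclusion of the lemma is already known with $C^m$ regularity, then $dF$ is $C^m$ and hence $F$ is $C^{m+1}$, which completes an induction on $m$. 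The only real difficulty is bookkeeping: one must take care to commute the spatial multi-indices through both the parameter derivatives and the remainder integral, and to extract constants that are uniform on a small compact neighbourhood of the base point — but compactness of $\Sigma$ (finite measure) together with smoothness of $f$ makes all the required bounds automatic.
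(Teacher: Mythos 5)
Your proof is correct and follows essentially the same route as the paper: identify the candidate derivative $v\mapsto v^\mu f_\mu(q,\cdot)$, estimate the remainder pointwise on $\Sigma$ via a Taylor-type argument and compactness, then bootstrap to $C^\infty$ by applying the result to the partial derivatives $f_\mu$. The only cosmetic difference is that you use the second-order Taylor formula with integral remainder (yielding an $O(|v|^2)$ bound from boundedness of second derivatives), while the paper uses the mean value theorem together with uniform continuity of the first derivatives to get the $o(|p|)$ bound directly; your version has the small advantage of avoiding the mean-value points depending on the spatial variable.
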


\begin{proof} As usual, we will denote partial derivatives with respect to $q^\mu$ by a subscript $\mu$.
It suffices to show that $F$ is everywhere differentiable, since all partial derivatives
$f_{\mu_1\mu_2\cdots\mu_r}$
are, like $f$, smooth maps $U\times\Sigma\ra\R$. By the mean value theorem there exist
$t_1,\ldots, t_r\in[0,1]$ such that
\bea
\|F(q+p)-F(q)-f_\mu(q,\cdot)p^\mu\|_k^2&\leq&
|p|^2\sum_{\mu=1}^{4n}\int_\Sigma\big\{(f_\mu(q+t_1p)-f_\mu(q))^2\nonumber\\
&&+(f_{\mu x}(q+t_2p)-f_{\mu x}(q))^2+\cdots\nonumber\\
&&\cdots+
(f_{\mu y\cdots y}(q+t_rp)-f_{\mu y\cdots y}(q))^2\big\}
\eea
since $f$ and all its partial derivatives are $C^1$ functions of $q$. But $f_\mu,\ldots,f_{\mu y\cdots y}$ are uniformly
continuous on $B_\delta(q)\times\Sigma$, for $\delta>0$ sufficiently small, so
\beq
\lim_{p\ra 0}\frac1{|p|}\|F(q+p)-F(q)-\frac{\cd f}{\cd q^\mu}\bigg|_q p^\mu\|_k=0
\eeq
as was to be proved.
\end{proof}

Now, by the definition of $H^k$, $\psi:U\ra H^k$ is differentiable if and only if $\psi_i:U\ra\hh^k$ is differentiable for
$i=1,2,3$, so we immediately obtain:

\begin{corollary}\label{Psismooth} $\Psi_k:U\ra H^k$, $q\mapsto \psi(q,\cdot)$, is smooth for all $k\in\N$.
\end{corollary}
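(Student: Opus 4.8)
The plan is to obtain this as an immediate consequence of Lemma \ref{fsmooth} together with the product structure of $H^k$. Recall from the proof of Proposition \ref{psiprop} that the chosen local parametrization $\psi:U\times\Sigma\ra S^2\subset\R^3$ is smooth jointly in $(q,z)$; hence, writing $\psi=(\psi^{(1)},\psi^{(2)},\psi^{(3)})$ for its three Cartesian components, each $\psi^{(i)}:U\times\Sigma\ra\R$ is a smooth function.

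First I would apply Lemma \ref{fsmooth} with $f=\psi^{(i)}$ for $i=1,2,3$. This yields, for every $k\in\N$, that each coordinate map $U\ra\hh^k$, $q\mapsto\psi^{(i)}(q,\cdot)$, is smooth. Then I would invoke the elementary fact that, since $H^k=\hh^k\oplus\hh^k\oplus\hh^k$ is a finite direct sum of Banach spaces, a map $U\ra H^k$ is of class $C^r$ if and only if each of its three component maps $U\ra\hh^k$ is of class $C^r$; applying this for every $r$ shows that $\Psi_k$ is smooth.

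There is no real obstacle here: Lemma \ref{fsmooth} has already done the analytic work (the mean value theorem together with the uniform continuity estimate), and all that remains is the trivial reduction to Cartesian components. An equivalent route, which avoids even that reduction, is to observe that the proof of Lemma \ref{fsmooth} goes through verbatim with $\R$ replaced by $\R^3$ and $\hh^k$ by $H^k$, since the argument is insensitive to the dimension of the target; one then applies this vector-valued version directly to $f=\psi$.
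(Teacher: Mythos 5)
Your proof is correct and matches the paper's argument exactly: apply Lemma \ref{fsmooth} componentwise and then use that differentiability of a map into $H^k=\hh^k\oplus\hh^k\oplus\hh^k$ is equivalent to differentiability of each $\hh^k$-valued component. The alternative vector-valued rephrasing you mention is fine too, but the paper takes the componentwise route.
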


The error terms are $j'(q,\dot{q},Y,Y_t)$ and
$h(q,\dot{q},Y,Y_t)$ where
\bea
j'(q,p,Y,Z)&=&2p^\mu(\psi_\mu\cdot Z)\psi+\eps(|Z|^2-|Y_x|^2-|Y_y|^2)\psi
+\eps p^\mu p^\nu(\psi_\mu\cdot\psi_\nu)Y\nonumber\\
&&-2\eps(\psi_x\cdot Y_x+\psi_y\cdot Y_y)Y
+\eps^2\{|Y|^2\Delta\psi+2(Y\cdot Y_x)\psi_x+2(Y\cdot Y_y)\psi_y\}\nonumber \\
&&+2\eps^2 p^\mu(\psi_\mu\cdot Z)Y+\eps^3(|Z|^2-|Y_x|^2-|Y_y|^2)Y
\nonumber\\
h(q,p,Y,Z)^\mu&=&\gamma^{\mu\nu}\{\ip{Z,\psi_{\nu\lambda}}p^\lambda+\eps\ip{Y,\psi_{\lambda\nu\rho}}p^\lambda p^\rho
+\eps\ip{\psi_\lambda\cdot\psi_\rho Y,\psi_\nu}p^\lambda p^\rho\nonumber\\
&&-2\eps\ip{(\psi_x\cdot Y_x+\psi_y\cdot Y_y)Y,\psi_\nu}
+2\eps^2\ip{(\psi_\lambda\cdot Z)Y,\psi_\nu}p^\lambda\nonumber\\
&&+\eps^3\ip{(|Z|^2-|Y_x|^2-|Y_y|^2)Y,\psi_\nu}\}.
\eea
We will also need to consider the quantities
\bea
{M^{\mu}}_{\nu}(q,Y)&=&{\delta^\mu}_{\nu}-\eps^2\gamma^{\mu\lambda}(q)\ip{Y,\psi_{\lambda\nu}}\\
A(q,Y)&=& -(|\psi_x|^2+|\psi_y|^2)Y-2(\psi_x\cdot Y_x+\psi_y\cdot Y_y)\psi-2(\psi\cdot Y)\Delta\psi\nonumber\\
&&-2(\psi\cdot Y)_x\psi_x-2(\psi\cdot Y)_y\psi_y.
\eea
Let $B$ denote the Banach space $\R^{4n}\times\R^{4n}\times H^3\times H^2$
with norm 
\beq
\|(q,p,Y,Z)\|_B=\max\{|q|,|p|,\|Y\|_3,\|Z\|_2\},
\eeq
 $B_U\subset B$ denote the open set on which $q\in U$ and for each 
$\Gamma\geq 0$, $B_\Gamma\subset B_U$ denote the closed convex subset on which $q\in K$ and $\|(0,p,Y,Z)\|_B\leq\Gamma$.

\begin{prop}\label{lautob}
 The quantities defined above are smooth maps $j':B_U\ra H^2$, $h:B_U\ra\R^{4n}$, ${M^{\mu}}_{\nu}:B_U\ra\R$,
$A:B_U\ra H^2$
\end{prop}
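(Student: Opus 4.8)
\section*{Proof proposal}

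The plan is to build each of the four maps out of a handful of ``atomic'' smooth maps using two standard facts of Banach-space calculus: (a) every bounded multilinear map between Banach spaces is $C^\infty$; and (b) $C^\infty$ maps are closed under composition and finite sums, and a map into a finite product of Banach spaces is $C^\infty$ if and only if each of its components is. By the algebra property of $\hh^2$ (Proposition \ref{algebra}), pointwise multiplication $\hh^2\times\hh^2\ra\hh^2$ is bounded bilinear, hence $C^\infty$; the same applies to the pointwise dot product $H^k\times H^2\ra\hh^2$ (any $k\geq 2$), the $L^2$ pairing $L^2\times L^2\ra\R$, and the scalar multiplications $\R\times\hh^2\ra\hh^2$ and $\R\times H^2\ra H^2$.

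First I would record that all ``coefficient'' maps are smooth. Since every partial derivative of the smooth map $\psi:U\times\Sigma\ra\R^3$ is again smooth, Lemma \ref{fsmooth} applied componentwise (cf.\ Corollary \ref{Psismooth}) shows that
$$
q\mapsto\psi(q,\cdot),\ \psi_\mu(q,\cdot),\ \psi_{\mu\nu}(q,\cdot),\ \psi_{\mu\nu\lambda}(q,\cdot),\ \psi_x(q,\cdot),\ \psi_y(q,\cdot),\ (\Delta\psi)(q,\cdot)
$$
are all $C^\infty$ maps $U\ra H^k$ for every $k\in\N$; precomposing with the bounded linear projection $B_U\ra U$, $(q,p,Y,Z)\mapsto q$, they become $C^\infty$ maps $B_U\ra H^k$. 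Moreover $\gamma_{\mu\nu}(q)=\ip{\psi_\mu,\psi_\nu}$ is $C^\infty$ on $U$, being the composition of the smooth map $q\mapsto(\psi_\mu(q,\cdot),\psi_\nu(q,\cdot))$ with the bounded bilinear $L^2$ pairing; since $\{\psi_\mu(q,\cdot)\}$ is linearly independent for every $q\in U$ by Proposition \ref{psiprop}, the Gram matrix $(\gamma_{\mu\nu})$ is positive definite and hence invertible, so $\gamma^{\mu\nu}:U\ra\R$ is $C^\infty$ by Cramer's rule. Finally the ``input'' maps $B\ra\R^{4n}$, $(q,p,Y,Z)\mapsto p$, and $B\ra H^3$, $\mapsto Y$, and $B\ra H^2$, $\mapsto Z$, $\mapsto Y_x$, $\mapsto Y_y$, are bounded linear, hence $C^\infty$.

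With these in hand I would simply observe that each term of the explicit formulas for $j'$, $h$, ${M^\mu}_\nu$ and $A$ is a finite composition of the atomic maps above with the bounded multilinear operations listed above. For instance, the term $p^\mu(\psi_\mu\cdot Z)\psi$ of $j'$ is $(q,p,Z)\mapsto p^\mu\,(\psi_\mu(q,\cdot)\cdot Z)\,\psi(q,\cdot)$, i.e.\ a real scalar times an $\hh^2$ function times an $H^2$ section; the term $\ip{(\psi_x\cdot Y_x+\psi_y\cdot Y_y)Y,\psi_\nu}$ of $h$ is obtained by forming $\psi_x\cdot Y_x+\psi_y\cdot Y_y\in\hh^2$, multiplying by $Y\in\hh^3$ to land in $\hh^2$, and pairing with $\psi_\nu\in L^2$; and the terms of $A$ and ${M^\mu}_\nu$ are entirely analogous. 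The one point needing care is to check, once and for all, that in each product the two least regular factors lie in $\hh^2$ (any remaining factors lying in $\hh^k$ for all $k$), so that Proposition \ref{algebra} applies and the product lands in $\hh^2$ --- which is precisely the regularity required for $j'$ and $A$ to take values in $H^2$, and more than enough for the $L^2$ pairings defining $h$ and ${M^\mu}_\nu$ to be well defined. By fact (b), each of the four maps is then $C^\infty$ on $B_U$.

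I expect the only genuine obstacle to be bookkeeping: running through the (numerous) terms of $j'$ and $A$ and tracking which Sobolev space each intermediate expression inhabits, so that every invocation of the algebra property of $\hh^2$ is legitimate and the landing space matches the claim. There is no analytic difficulty beyond this, since no term of $j'$, $h$ or $A$ involves more than one derivative of $Y$, and $Y_x$, $Y_y$, $Z$ all lie in $\hh^2$ already.
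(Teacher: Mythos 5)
Your proposal is correct and takes essentially the same route as the paper's own proof: each map is written as a finite composition of the smooth parametrization maps of Corollary~\ref{Psismooth}, the manifestly smooth bounded (multi)linear operations (coordinate projections, differentiation $H^k\to H^{k-1}$, the $L^2$ pairing, and the $\hh^2$-algebra multiplication of Proposition~\ref{algebra}), and $C^\infty$ is closed under composition. The extra detail you supply (Cramer's rule for $\gamma^{\mu\nu}$, precomposition with the projection $B_U\to U$) is left implicit in the paper but is entirely in the same spirit.
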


\begin{proof} That $j', h$ define maps $B_U\ra H^2$ and $B_U\ra\R^{4n}$ follows immediately from
the algebra property of $\hh^2$ (Proposition \ref{algebra}).  Now $j'$ is a linear combination of terms
formed by composing the maps
\bea
\Psi_k:U\ra H^k,&\quad& q\mapsto \psi(q,\cdot)\nonumber\\
\d\Psi_k:U\times\R^{4n}\ra H^k,&\quad& (q,p)\mapsto p^\mu\psi_\mu(q,\cdot)\nonumber\\
\eea
which are smooth by Corollary \ref{Psismooth}, and the
manifestly smooth maps
\bea
H^k\ra\hh^k,&\quad& Y\mapsto Y_i\nonumber\\
\hh^k\ra H^k,&\quad& f\mapsto fe_i\nonumber\\
H^k\ra H^{k-1},&\quad& Y\mapsto Y,\, Y\mapsto Y_x,\, Y\mapsto Y_y\nonumber\\
\hh^2\times\hh^2\ra\hh^2,&\quad& (f,g)\ra fg,\label{likn}
\eea
where $e_1=(1,0,0)$, $e_2=(0,1,0)$ and $e_3=(0,0,1)$.
Hence $j'$ is smooth.

The map $h$ is handled similarly, after noting that the inverse metric coefficients $\gamma^{\mu\nu}$ are smooth $U\ra \R$, the
higher derivatives
\bea
\d^2\Psi_k:U\times\R^{4n}\times\R^{4n}\ra H^k,&\quad& (q,p_1,p_2)\mapsto p_1^\mu p_2^\nu\psi_{\mu\nu}(q,\cdot)\nonumber\\
\d^3\Psi_k:U\times\R^{4n}\times\R^{4n}\times\R^{4n}\ra H^k,&\quad& (q,p_1,p_2,p_3)\mapsto p_1^\mu p_2^\nu p_3^\lambda\psi_{\mu\nu\lambda}(q,\cdot)
\eea
are smooth by Lemma \ref{Psismooth} and, in addition to the maps in (\ref{likn}) above, the map 
\beq
H^0\times H^0\ra \R,\qquad (Y,Z)\mapsto \ip{Y,Z}
\eeq
is manifestly smooth.

That ${M^\mu}_{\nu}$ and $A$ define smooth maps on $B_U$ is clear, since their $q$-dependence is smooth, and they depend linearly on $Y$ (and are 
independent of $p$ and $Z$).
\end{proof}

We can now assemble these pieces to show that $f$ and $g$ are smooth functions on $X_\Gamma$, the space defined in section \ref{sec:locex}. 
To do so, we note that
$f=\wh{f}\circ\iota$ and $g=\wh{g}\circ\iota$ where $\iota:X_\Gamma\ra B_{8c(\Sigma)\Gamma}$ is the linear isometry
$$
\iota:(q,p,Y,Z)\ra (q,\eps^{-1}p,Y,Z)
$$
and $\wh{f}:B_{\Gamma'}\ra \R^{4n}$,$\wh{g}:B_{\Gamma'}\ra H^2$ are
\bea
\wh{f}(q,p,Y,Z)&=&M^{-1}(q,Y)(-\eps^2 G(q,p,p)+\eps h(q,p,Y,Z))\\
\wh{g}(q,p,Y,Z)&=&-A(q,Y)-\psi_\mu \wh{f}^\mu(q,p,Y,Z)-\psi_{\mu\nu}p^\mu p^\nu+\eps j'(q,p,Y,Z),
\eea
and $G$ is defined in (\ref{lato}). The point is that $f,g$ are defined as functions of $q_t$ (and $q,Y,Y_t$) on a space ($X_\Gamma$)
with $\eps$-dependent norm, but it is more covenient here to think of them as functions of $\dot{q}=\eps^{-1}q_t$, on a space ($B_{\Gamma'}$, 
$\Gamma'=8c(\Sigma)\Gamma$) with fixed norm. Since $\iota$ is a linear isometry, $f,g$ are smooth, bounded, Lipschitz, etc.\ if and only if $\wh{f},
\wh{g}$ are. 

\begin{proposition} There exists $\eps_*=O(1/\sqrt{\Gamma})$ such that for all $\eps\in(0,\eps_*)$, $\wh{f}:B_\Gamma\ra\R^{4n}$ and
$\wh{g}:B_\Gamma\ra H^2$ are smooth.
\end{proposition}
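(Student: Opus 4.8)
The strategy is to reduce the claim to the single substantive point — that the matrix $M(q,Y)$ is invertible for all $(q,p,Y,Z)\in B_\Gamma$ once $\eps$ is small — and then to observe that $\wh f$ and $\wh g$ are built from $M^{-1}$, the Christoffel symbols $G^\mu_{\nu\lambda}$, and the quantities $h$, $j'$, $A$, $\psi_\mu$, $\psi_{\mu\nu}$ by finitely many compositions, products and sums of smooth maps; smoothness of $\wh f$, $\wh g$ then follows from the chain rule together with the smoothness statements already established in Proposition~\ref{lautob} and Corollary~\ref{Psismooth}.

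First I would estimate $M-\id$. Writing $M(q,Y)=\id-\eps^2 N(q,Y)$ with $N(q,Y)^\mu{}_\nu=\gamma^{\mu\lambda}(q)\ip{Y,\psi_{\lambda\nu}(q)}$, compactness of $K$ bounds the inverse metric coefficients and the $L^2$ norms $\|\psi_{\lambda\nu}(q,\cdot)\|$, so Cauchy--Schwarz yields a constant $C(K)>0$ with operator norm $\|N(q,Y)\|\le C(K)\|Y\|_0\le C(K)\Gamma$ for every $(q,p,Y,Z)\in B_\Gamma$, since $\|Y\|_0\le\|Y\|_3\le\Gamma$ there. Hence $\|\eps^2 N(q,Y)\|\le\eps^2 C(K)\Gamma<1$ whenever $\eps<\eps_*:=(C(K)\Gamma)^{-1/2}$, which is $O(1/\sqrt\Gamma)$ as required, and for such $\eps$ the Neumann series shows $M(q,Y)\in GL(4n,\R)$ throughout $B_\Gamma$. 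Since matrix inversion $\mathrm{inv}\colon GL(4n,\R)\to GL(4n,\R)$ is $C^\infty$ (its entries are, by Cramer's rule, rational functions with nonvanishing denominator) and $(q,Y)\mapsto M(q,Y)$ is smooth with values in $GL(4n,\R)$ by Proposition~\ref{lautob}, the composition $(q,Y)\mapsto M^{-1}(q,Y)$ is smooth on $B_\Gamma$.

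Next I would assemble $\wh f$ and $\wh g$. The metric coefficients $\gamma_{\mu\nu}(q)=\ip{\psi_\mu(q),\psi_\nu(q)}$ are smooth in $q$ (Corollary~\ref{Psismooth}, paired with the bounded bilinear form $\ip{\cdot,\cdot}$ on $H^0$) and positive definite on $K$, so $\gamma^{\mu\nu}$, and hence the Christoffel symbols $G^\mu_{\nu\lambda}$, are smooth on $K$; thus $(q,p)\mapsto-\eps^2 G(q,p,p)+\eps h(q,p,Y,Z)$ is a smooth map $B_\Gamma\to\R^{4n}$ (using smoothness of $h$ from Proposition~\ref{lautob}), and $\wh f$, being the pointwise matrix--vector product of this with $M^{-1}$, is smooth $B_\Gamma\to\R^{4n}$. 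For $\wh g=-A(q,Y)-\psi_\mu\wh f^\mu-\psi_{\mu\nu}p^\mu p^\nu+\eps j'(q,p,Y,Z)$: the maps $A\colon B_U\to H^2$ and $j'\colon B_U\to H^2$ are smooth (Proposition~\ref{lautob}), $q\mapsto(\psi_1(q),\dots,\psi_{4n}(q))\in(H^2)^{4n}$ and $(q,p)\mapsto p^\mu p^\nu\psi_{\mu\nu}(q,\cdot)\in H^2$ are smooth (Corollary~\ref{Psismooth} and its higher-derivative consequences used in Proposition~\ref{lautob}), and the contraction $\R^{4n}\times(H^2)^{4n}\to H^2$, $(a,V)\mapsto a^\mu V_\mu$, is bounded bilinear hence smooth; composing the last with $(\wh f,(\psi_\mu))$ shows $\wh f^\mu\psi_\mu$ is smooth $B_\Gamma\to H^2$. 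Therefore $\wh g$ is a sum of smooth $H^2$-valued maps, hence smooth on $B_\Gamma$.

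The proof involves no genuine difficulty: once Proposition~\ref{lautob} and Corollary~\ref{Psismooth} are available, everything reduces to compositions of smooth maps. The only step that requires an actual argument, and the only place $\eps$ enters, is the uniform invertibility of $M$ over $B_\Gamma$; the bound $\|M-\id\|\le\eps^2 C(K)\Gamma$ is precisely what forces the threshold $\eps_*$ to scale like $\Gamma^{-1/2}$. The one bookkeeping care point is to keep track of which intermediate maps are $\R^{4n}$-valued ($h$, $G$, $\wh f$) and which are $H^2$-valued ($A$, $j'$, $\psi_\mu$, $\psi_{\mu\nu}$), so that the $H^2$-valued products are legitimate — which is exactly where the algebra property of $\hh^2$ (Proposition~\ref{algebra}), already invoked in Proposition~\ref{lautob}, does its work.
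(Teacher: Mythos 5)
Your proof is correct and follows essentially the same route as the paper's: uniform invertibility of $M$ on $B_\Gamma$ for small $\eps$, smoothness of $M^{-1}$ via Cramer's rule (rational functions with nonvanishing denominator), and then assembly of $\wh f$, $\wh g$ by composition with the smooth ingredients from Proposition~\ref{lautob} and Corollary~\ref{Psismooth}. The one place you add genuine detail is the explicit Neumann-series bound $\|\eps^2 N\|\le\eps^2 C(K)\Gamma<1$ justifying the $\eps_*=O(1/\sqrt\Gamma)$ scaling, which the paper merely asserts; otherwise the two arguments coincide.
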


\begin{proof} Since $\gamma^{\mu\lambda}(q)$, $\psi_{\lambda\nu}(q)$ are smooth and $K$ is compact, there exists $\eps_*>0$ such that the
matrix $M(q,Y)$ is uniformly invertible on $B_\Gamma$ for all $\eps\in(0,\eps_*)$. The components of $M^{-1}$ are rational in ${M^\mu}_{\nu}$
with denominator $\det M$, which is bounded away from $0$ on $B_\Gamma$. Hence, $M^{-1}$ is smooth on $B_\Gamma$, and the proposition follows 
immediately from the Leibniz rule and Proposition \ref{lautob}.
\end{proof}

Since $\wh{f}$ and $\wh{g}$ are smooth, they are certainly continuously differentiable. To complete the proof of Proposition \ref{fgprops}, it
remains to show that their differentials are bounded.
\begin{prop}\label{sa-jame}
For all $\eps\in(0,\eps_*)$, the
derivatives $\d\wh{g}:B_\Gamma\ra{\cal L}(B,H^2)$ and $\d\wh{f}: B_\Gamma\ra{\cal L}(B,\R^{4n})$ are bounded, independent of $\eps$.
\end{prop}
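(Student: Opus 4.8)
The plan is to exploit the explicit ``monomial'' structure of $\wh f$ and $\wh g$ and keep careful track of powers of $\eps$. Smoothness of $\wh f,\wh g$ on $B_\Gamma$ (for $\eps<\eps_*$) has already been established, so $\d\wh f$ and $\d\wh g$ exist and are continuous; the only point is $\eps$-uniform boundedness of their operator norms. The key observation is that, inspecting the formulas for $G$, $h$, $j'$, $A$ and $M^\mu_{\ \nu}-\delta^\mu_{\ \nu}$, each of these is a finite sum of terms of the schematic form
$$
\eps^{r}\,c(q)\cdot(\text{product of finitely many factors drawn from }\{p^\lambda\}\cup\{Y,Y_x,Y_y\}\cup\{Z\})
$$
contracted with fixed smooth sections $\psi,\psi_\mu,\psi_{\mu\nu},\psi_{\mu\nu\lambda},\Delta\psi$ either pointwise (so the term lies in $H^2$) or through the $L^2$ pairing (so the term lies in $\R^{4n}$), where $r\ge 0$ and $c(q)$ is a smooth $\R$-valued function of $q$ built from the $\gamma^{\mu\nu}(q)$, with all derivatives bounded on the compact set $K$.

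First I would bound such a schematic term and its Fr\'echet derivative on $B_\Gamma$. On $B_\Gamma$ we have $q\in K$, $|p|\le\Gamma$, $\|Y\|_3\le\Gamma$ — hence $\|Y\|_{C^1}\le\alpha\Gamma$ and $\|Y_x\|_2,\|Y_y\|_2\le\Gamma$ by the Sobolev inequality (Proposition \ref{sobolev}) — and $\|Z\|_2\le\Gamma$, while $\|\psi_{\mu\nu\cdots}(q)\|_k$ is bounded on $K$ for every $k$ by Corollary \ref{Psismooth}. Using the Banach algebra property of $\hh^2$ (Proposition \ref{algebra}) to bound $H^2$-norms of products and Cauchy--Schwarz for the $L^2$ pairings, each schematic term is bounded on $B_\Gamma$ by a polynomial in $\Gamma$ (with coefficients depending only on $\Sigma$ and $K$) times $\eps^{r}\le 1$. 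Differentiating in a direction $\omega=(\dot q,\dot p,\dot Y,\dot Z)\in B$ produces again a finite sum of schematic terms with the \emph{same} power $\eps^{r}$: the derivative either falls on $c(q)$ or on a $\psi$-section (replaced by a $q$-derivative, still bounded on $K$ by Corollary \ref{Psismooth}), or on one of the factors $p^\lambda,Y,Y_x,Y_y,Z$ (replaced by the corresponding component of $\omega$, of $B$-norm at most $\|\omega\|_B$). Hence $\|\d(\text{term})_x\omega\|_2\le\mathrm{poly}(\Gamma)\|\omega\|_B$ and $|\d(\text{term})_x\omega|\le\mathrm{poly}(\Gamma)\|\omega\|_B$, uniformly in $\eps\in(0,1)$. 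Applying this termwise gives $\d G,\d h,\d j',\d A$ all bounded on $B_\Gamma$, uniformly in $\eps$.

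Second I would deal with $M^{-1}$, the only non-polynomial ingredient. Write $M=I-\eps^{2}R(q,Y)$ with $R^\mu_{\ \nu}=\gamma^{\mu\lambda}(q)\ip{Y,\psi_{\lambda\nu}}$; by the previous step $R$ and $\d R$ are bounded on $B_\Gamma$ uniformly in $\eps$, so after shrinking $\eps_*$ if necessary we may assume $\eps^{2}\|R\|\le\tfrac12$ on $B_\Gamma$. Then the Neumann series gives $\|M^{-1}\|\le2$, and $\d(M^{-1})=M^{-1}(\d M)M^{-1}$ with $\d M=-\eps^{2}\d R$, so $\|\d(M^{-1})_x\omega\|\le4\eps^{2}\|\d R_x\omega\|\le\mathrm{poly}(\Gamma)\|\omega\|_B$, uniformly in $\eps$. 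Now $\d\wh f=\d(M^{-1})(-\eps^{2}G+\eps h)+M^{-1}(-\eps^{2}\d G+\eps\,\d h)$; since $\eps^{2},\eps\le1$ on $B_\Gamma$ all five factors are uniformly bounded, so $\d\wh f$ is bounded on $B_\Gamma$ uniformly in $\eps$, and in particular $|\wh f|\le\mathrm{poly}(\Gamma)$ there. Finally $\wh g=-A(q,Y)-\psi_\mu\wh f^\mu-\psi_{\mu\nu}p^\mu p^\nu+\eps j'$, so $\d\wh g$ is a sum of: $\d A$ (bounded, from step one, with no $\eps$ at all); $(\d\psi_\mu)(\dot q)\,\wh f^\mu+\psi_\mu\,\d\wh f^\mu(\omega)$ (bounded, using $\|\psi_\mu\|_2,\,|\wh f|$ bounded on $B_\Gamma$, $\d\wh f$ bounded, and Proposition \ref{algebra}); the derivative of $\psi_{\mu\nu}p^\mu p^\nu$ (a schematic-term derivative, bounded); and $\eps\,\d j'$ (bounded, with $\eps\le1$). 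This yields the claimed $\eps$-uniform bound on $\d\wh g$, completing the proof of Proposition \ref{fgprops}.

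The main obstacle is the bookkeeping verifying that differentiation never generates a negative power of $\eps$ — in particular that $\d(M^{-1})$ inherits the factor $\eps^{2}$ from $\d M$, and that the overall prefactors $\eps^{2},\eps$ in $\wh f$ and $\eps$ in the $j'$-term of $\wh g$ survive differentiation. Once the normal form $\eps^{r}c(q)\times(\text{bounded multilinear expression})$ is in place and seen to be stable under differentiation, everything reduces to repeated use of Propositions \ref{algebra} and \ref{sobolev} and Corollary \ref{Psismooth}.
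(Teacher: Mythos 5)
Your proof is correct and follows essentially the same strategy as the paper: termwise estimation of the operator norms of the Fr\'echet derivatives, using the Banach algebra property of $\hh^2$, the Sobolev embedding, compactness of $K$, and the Leibniz rule together with uniform invertibility of $M$ to handle $\d(M^{-1})$. Your ``schematic form'' bookkeeping makes the $\eps$-uniformity slightly more explicit, but the paper's ``bound one representative term and note the rest are similar'' argument amounts to the same thing.
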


\begin{proof} This is established by estimating the operator norm of the derivatives termwise. For example, the first term of $j'$,
\beq
J(q,p,Y,Z)=2 p^\mu(\psi_\mu\cdot Z)\psi
\eeq
has derivative
\beq
dJ_{(q,p,Y,Z)}:(\hat{q},\hat{p},\hat{Y},\hat{Z})\mapsto 2\hat{q}^\nu p^\mu[(\psi_{\nu\mu}\cdot Z)\psi+(\psi_\mu\cdot Z)\psi_\nu]
+2\hat{p}^\mu(\psi_\mu\cdot Z)\psi
+2 p^\mu(\psi_\mu\cdot\hat{Z})\psi,
\eeq
and so, by the algebra property of $\hh^2$, for all $(q,p,Y,Z)\in B_\Gamma$,
\bea
\|dJ_{(q,p,Y,Z)}(\hat{q},\hat{p},\hat{Y},\hat{Z})\|_2&\leq& C(K)\{|\hat{q}||p|\|Z\|_2+|\hat{p}|\|Z\|_2+\|Z\|_2|p|\}\nonumber\\
&\leq& C(K)\{2\Gamma+\Gamma^2\}\|(\hat{q},\hat{p},\hat{Y},\hat{Z})\|_B
\eea
where $C(K)>0$ is a constant depending only on $K$. Hence, for all $(q,p,Y,Z)\in B_\Gamma$,
\beq
\|d J_{(p,q,Y,Z)}\|_{{\cal L}(B,H^2)}\leq C(K)\{2\Gamma+\Gamma^2\}.
\eeq
The other terms of $j', h$ and $A$ are handled similarly. To bound $\d M^{-1}$ we bound $\d M$ and appeal to the Leibniz rule and
uniform invertibility of $M$. Boundedness of the differentials of functions depending only on $q$ and $p$ is immediate by compactness and finiteness
of dimension.
\end{proof}

Finally, we turn to Proposition \ref{fgdiffprop}, which is equivalent to:

\begin{prop} 
The differentials of the maps $\wh{f}:B_\Gamma\ra\R^{4n}$ 
$\wh{g}:B_\Gamma\ra H^2$ extend to maps $\d\wh{f}^{ext}:B_\Gamma\ra{\cal L}(\R^{4n}\times \R^{4n}\times H^1\times
L^2,\R^{4n})$ and
$\d\wh{g}^{ext}g:B_\Gamma\ra{\cal L}(\R^{4n}\times \R^{4n}\times H^1\times
L^2,L^2)$ bounded by $\Lambda_f$, $\Lambda_g$ respectively.
\end{prop}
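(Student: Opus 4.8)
The plan is to reduce everything to the termwise estimates already carried out in the proof of Proposition \ref{sa-jame}, and to notice that those estimates never genuinely used the full $H^3$ (resp.\ $H^2$) norm of the perturbation $\hat Y$ (resp.\ $\hat Z$): each term costs only a $C^0$ norm on the ``coefficient'' data multiplying the perturbation, together with an $H^1$ norm of $\hat Y$ and an $L^2$ norm of $\hat Z$. This is a structural consequence of the fact, emphasised repeatedly above, that $j'$, $h$ and $A$ contain $Y$ with at most one spatial derivative and contain $Y_t$ undifferentiated.

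Concretely, I would write $\d\wh f_{(q,p,Y,Z)}$ and $\d\wh g_{(q,p,Y,Z)}$ out termwise via the Leibniz rule, exactly as in the proof of Proposition \ref{sa-jame}. Each term is linear in $\omega=(\hat q,\hat p,\hat Y,\hat Z)$ and has the schematic shape (smooth function of $q$) $\times$ (monomial in $p$) $\times$ (one of $1,Y,Y_x,Y_y,Z$) $\times$ (one of $\hat q,\hat p,\hat Y,\hat Y_x,\hat Y_y,\hat Z$), possibly paired in $L^2$ against a $q$-derivative of $\psi$ (this is how the $\R^{4n}$-valued objects $h$ and $\d\wh f$ arise) and possibly composed with $M^{-1}$ or with $\d(M^{-1})=-M^{-1}(\d M)M^{-1}$. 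On $B_\Gamma$ every coefficient factor is uniformly bounded in $C^0$: the $q$-derivatives of $\psi$ because $\psi$ is smooth on the compact set $K\times\Sigma$ and $\Psi_k:U\ra H^k$ is smooth (Corollary \ref{Psismooth}); $p$ because $|p|\leq\Gamma$; $Y,Y_x,Y_y$ because $\|Y\|_3\leq\Gamma$ and $H^3\hookrightarrow C^1$; and $Z$ because $\|Z\|_2\leq\Gamma$ and $H^2\hookrightarrow C^0$ (Proposition \ref{sobolev}). Moreover $M^{-1}$ is uniformly bounded on $B_\Gamma$ for $\eps\in(0,\eps_*)$, and $\d M$ in the direction $\hat Y$ equals the $L^2$-bounded quantity $-\eps^2\gamma^{\mu\lambda}\ip{\hat Y,\psi_{\lambda\nu}}$. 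Applying Cauchy--Schwarz to the $L^2$ pairings, and simply taking $L^2$ norms of the $H^2$-valued terms (pulling out the $C^0$ bounds on the coefficients and leaving $\int|\hat Y_x|^2$ or $\int|\hat Z|^2$), I then expect each term, and hence $\d\wh f_{(q,p,Y,Z)}\omega$ and $\d\wh g_{(q,p,Y,Z)}\omega$ after summing the finitely many of them, to be bounded by some $C(K,\Gamma)\max\{|\hat q|,|\hat p|,\|\hat Y\|_1,\|\hat Z\|_0\}$, uniformly over $(q,p,Y,Z)\in B_\Gamma$ and $\eps\in(0,\eps_*)$. The one term of $\d\wh g$ not immediately of this shape --- the contribution $-\psi_\mu\,\d\wh f^\mu_{(q,p,Y,Z)}(\omega)$ coming from $-\psi_\mu\wh f^\mu$ --- I would control using the bound on $\d\wh f$ just obtained together with $\|\psi_\mu\|_{C^0}\leq C(K)$, and the remaining piece $-\psi_{\mu\nu}\hat q^\nu\wh f^\mu$ using boundedness of $\wh f$ (Corollary \ref{fglip}). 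Call the resulting constants $\Lambda_f(K,\Gamma)$, $\Lambda_g(K,\Gamma)$, enlarging the ones of Proposition \ref{fgprops} if necessary.

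With this uniform bound the extension is routine. Writing $\wt B=\R^{4n}\times\R^{4n}\times H^1\times L^2$ with norm $\|\omega\|_{\wt B}=\max\{|\hat q|,|\hat p|,\|\hat Y\|_1,\|\hat Z\|_0\}$, the space $B$ is dense in $\wt B$ (as $H^3$ is dense in $H^1$ and $H^2$ is dense in $L^2$), so for each fixed base point the bounded linear transformation theorem gives unique continuous extensions $\d\wh f^{ext}_{(q,p,Y,Z)}\in{\cal L}(\wt B,\R^{4n})$ and $\d\wh g^{ext}_{(q,p,Y,Z)}\in{\cal L}(\wt B,L^2)$, with operator norms still at most $\Lambda_f$ and $\Lambda_g$. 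To see that $(q,p,Y,Z)\mapsto\d\wh f^{ext}_{(q,p,Y,Z)}$ and $(q,p,Y,Z)\mapsto\d\wh g^{ext}_{(q,p,Y,Z)}$ remain continuous into these operator-norm topologies, I would re-run the same termwise estimates on the differences, using that all the coefficient data depend continuously in $C^0$ on $(q,p,Y,Z)\in B_\Gamma$ --- again by smoothness of $\Psi_k:U\ra H^k$ and the embeddings $H^3\hookrightarrow C^1$, $H^2\hookrightarrow C^0$.

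I do not anticipate a genuine obstacle; the content is bookkeeping. The single point that must be verified rather than waved through is the structural claim underlying the first two paragraphs: that differentiating $j'$, $h$, $A$ and $M^{-1}$ never produces a term containing a spatial derivative of $\hat Y$ of order higher than one, nor any spatial derivative of $\hat Z$. This is immediate from the explicit formulae, since these quantities are polynomial in the (smooth, all-order) $q$-derivatives of $\psi$ and in $Y,Y_x,Y_y,Y_t$ only --- no spatial derivative of $Y$ beyond first order, and none of $Y_t$ --- but it is exactly the place where a careless estimate would cost $\|\hat Y\|_2$ or $\|\hat Z\|_1$ and so fail to yield an extension to the weaker space.
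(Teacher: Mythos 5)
Your proposal is correct and follows essentially the same route as the paper: a termwise inspection of the differentials, observing that $\hat Y$ enters with at most one spatial derivative and $\hat Z$ with none, combined with $C^0$ bounds on the coefficient factors and Cauchy--Schwarz for the $L^2$ pairings. The paper compresses this to a single illustrative term (the last term of $h$) plus the assertion that the remaining terms behave likewise, whereas you spell out the structural reason behind the assertion and the density/extension step; the content is the same.
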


\begin{proof} This follows from explicit termwise computation.
For example, the last term of $h$ is
$$
m_\mu(q,Y,Z)=\ip{(|Z|^2-|Y_x|^2-|Y_y|^2)Y,\psi_\mu}
$$
whose differential at $(q,Y,Z)$ is the linear map $\R^{4n}\times H^3\times H^2\ra \R$
\ben
\d m_\mu:(\hat{q},\hat{Y},\hat{Z})&\mapsto&\ip{(|Z|^2-|Y_x|^2-|Y_y|^2)Y,\psi_{\mu\nu}}\hat{q}^\nu+
\ip{(|Z|^2-|Y_x|^2-|Y_y|^2)\psi_\mu,\hat{Y}}\\
&&-2\ip{(Y\cdot\psi_\mu)Y_x,\hat{Y}_x}
-2\ip{(Y\cdot\psi_\mu)Y_y,\hat{Y}_y}
+2\ip{(Y\cdot\psi_\mu)Z,\hat{Z}}
\een
which clearly extends to a bounded linear map $\R^{4n}\times H^1\times L^2\ra\R$. 
\end{proof}

\ignore{To complete the proof of Proposition \ref{fgprops}, we will need another elementary functional analytic lemma:

\begin{lemma}\label{supo}
 Let $\XX$ be Banach space, $\UU\subset\XX$, $\YY$ be a Banach algebra and $\lip_0(\UU,\YY)$ be the set of bounded Lipschitz maps
$\UU\ra \YY$. Then $\lip_0(U,Y)$ is an algebra (with respect to pointwise addition and multiplication).
\end{lemma}

\begin{proof} Let $f,g\in\lip_0(\UU,\YY)$, and $\lambda\in\R$. 
Clearly, $\lambda f\in\lip_0(\UU,\YY)$ and $f+g\in\lip_0(\UU,\YY)$, so $\lip_0(\UU,\YY)$
is a vector space. Further, there exist constants $C_f,C_g,\Lambda_f,\Lambda_g\in\R$ such that for all $x,x'\in\UU$,
$$
\|f(x)\|_\YY\leq C_f,\quad
\|g(x)\|_\YY\leq C_g,\quad
\|f(x)-f(x')\|_\YY\leq\Lambda_f\|x-x'\|_\XX,\quad
\|g(x)-g(x')\|_\YY\leq\Lambda_g\|x-x'\|_\XX,
$$
so for all $x,x'\in\UU$, $\|(fg)(x)\|_\YY\leq C_fC_g$ and
\ben
\|(fg)(x)-(fg)(x')\|_\YY&\leq&\|f(x)-f(x')\|_\YY\|g(x)\|_\YY+\|f(x')\|_\YY\|g(x)-g(x')\|_\YY\\
&\leq&(C_g\Lambda_f+C_f\Lambda_g)\|x-x'\|_\XX.
\een
Hence $fg\in\lip_0(\UU,\YY)$.
\end{proof}

\begin{proof}[Proof of Proposition \ref{fgprops}]
First note that $f=\wh{f}\circ\iota$ and $g=\wh{g}\circ\iota$ where $\iota:X_\Gamma\ra B_{8c(\Sigma)\Gamma}$ is the linear isometry
$$
\iota:(q,p,Y,Z)\ra (q,\eps^{-1}p,Y,Z)
$$
and $\wh{f}:B_{\Gamma'}\ra \R^{4n}$,$\wh{g}:B_{\Gamma'}\ra H^2$ are
\bea
\wh{f}(q,p,Y,Z)&=&M^{-1}(q,Y)(-\eps^2 G(q,p,p)+\eps h(q,p,Y,Z))\\
\wh{g}(q,p,Y,Z)&=&-B_\psi Y-\psi_\mu \wh{f}^\mu(q,p,Y,Z)-\psi_{\mu\nu}p^\mu p^\nu+\eps j'(q,p,Y,Z),
\eea
$M$, $G$ and $B_\psi$ being as defined in (\ref{suspow}), (\ref{lato}) and (\ref{Bdef}) respectively. It suffices, therefore, to show that
$\wh{f}^\mu:B_{\Gamma'}\ra\R$ and  
$\wh{g}_i:B_{\Gamma'}\ra\hh^2$ are Lipschitz uniformly in $\eps$, for $\eps$ sufficiently small for all $\mu=1,\ldots,4n$ and $i=1,2,3$. But this
follows from Lemma \ref{supo} once we establish that each term in $\wh{f}$ and $\wh{g}$ is bounded and Lipschitz on $B_{\Gamma'}$, uniformly
in $\eps$. This has already been established for $h$ and $j'$ (Proposition \ref{lautob}), and is obvious for $G$, $B_\psi Y$ and
$\psi_{\mu\nu}p^\mu p^\nu$. From (\ref{suspow}) we see that each matrix element $M^{\mu}_{\, \nu}$ of $M$ is a smooth, real function on
$B_{\Gamma'}$ and that there exists $\eps_*=O(1/\sqrt{\Gamma'})$ such that $\det M$ is bounded away from $0$, and
each $M^{\mu}_{\,\nu}$ and its derivative are bounded  uniformly in $\eps$, for all
$\eps\in [0,\eps_*]$. Now the matrix elements of $M^{-1}$ are rational functions of $M^\mu_{\,\nu}$ with denominator $\det M$, and so are themselves
smooth functions on $B_{\Gamma'}$, bounded and with bounded derivative, uniformly in $\eps$. The result now follows from Lemma
\ref{s-jm}.
\end{proof}
}

\subsection*{Acknowledgements} This work was financially supported by EPSRC. It 
began as a collaboration with Mark Haskins, and
I gratefully acknowledge many hours of valuable discussion of the problem with him. I also
benefited from conversations with David Stuart and Matthew Daws. The work was 
largely completed
while I was on sabbatical leave at the 
Isaac Newton Institute for Mathematical Sciences in Cambridge.

\end{document}